\newcommand{\norme}[1]{\left\Vert #1\right\Vert}
\newtheorem{Lemme}{Lemma}[section]
\newtheorem{Prop}{Proposition}[section]  
\newtheorem{Def}{Definition}[section]
\newtheorem{Rmq}{Remark}[section]
\newtheorem{Thm}{Theorem}[section]
\theoremstyle{remark}
\newcommand{\be}{\begin{equation}}
\newcommand{\ee}{\end{equation}}
\newcommand{\ba}{\begin{array}}
\newcommand{\ea}{\end{array}}
\newcommand{\bea}{\begin{eqnarray}}
\newcommand{\eea}{\end{eqnarray}}
\newcommand{\bee}{\begin{eqnarray*}}
\newcommand{\eee}{\end{eqnarray*}}
\renewcommand{\div}{\mbox{\rm div}\;\!}
\newcommand{\B} {\mathbb{B}}
\newcommand{\C} {\mathbb{C}} 
\newcommand{\N} {\mathbb{N}}
\newcommand{\R} {\mathbb{R}}    
\renewcommand{\S} {\mathbb{S}}    
\newcommand{\Z} {\mathbb{Z}} 
\newcommand{\cA} {\mathcal{A}} 
\newcommand{\cB} {\mathcal{B}}    
\newcommand{\cC} {\mathcal{C}}
\newcommand{\cH} {\mathcal{H}}     
\newcommand{\cI} {\mathcal{I}}     
\newcommand{\cL} {\mathcal{L}}      
\newcommand{\cM} {\mathcal{M}}  
\newcommand{\cN} {\mathcal{N}}  
\newcommand{\cO} {\mathcal{O}}     
\newcommand{\cR} {\mathcal{R}}
\def \with {\quad\!\hbox{with}\!\quad}
\def \andf {\quad\!\hbox{and}\!\quad}
\def\dD{\delta\!D}
\def\dZ_1{\delta\!Z_1}
\def\e{\varepsilon}
\def\d{\partial}
\def\wh{\widehat}
\def\wt{\widetilde}
\def\ddj{\dot\Delta_j}
\title[Partially dissipative hyperbolic systems]{Global existence  for partially dissipative hyperbolic systems in the $\textsc{L}^p$ framework, and relaxation limit}
\date{}
\subjclass[2020]{35Q35; 76N10}
\keywords{Hyperbolic systems, critical regularity, relaxation limit, partially dissipative}
\author{Timothée Crin-Barat and Raphaël Danchin}
\begin{document}


\begin{abstract}
   Here we investigate global strong solutions for a class of partially dissipative hyperbolic systems in the framework of critical homogeneous Besov spaces. Our primary goal is to extend the analysis of our previous paper \cite{CBD2} to a functional framework
   where the low frequencies of the solution are only bounded in $L^p$-type spaces with $p$ larger than $2.$ This enables us  to prescribe weaker smallness conditions for global well-posedness
   and  to get a more accurate information on the qualitative properties of the constructed solutions. 
   Our existence theorem  in particular applies 
   to the multi-dimensional isentropic compressible Euler system
   with relaxation, and provide us with bounds that are 
   \emph{independent} of the relaxation parameter. As a consequence, 
we justify rigorously the  relaxation limit 
to  the porous media equation and exhibit explicit rates of convergence for suitable norms, a completely new result to the
best of our knowledge.
 \end{abstract}

\maketitle

\section*{Introduction}

We are concerned with  multi-dimensional first order  $n$-component systems in $\R^d$ for $d\geq1$ of the type:
\begin{equation}
\frac{\partial V}{\partial t} + \sum_{k=1}^dA^k(V)\frac{\partial V}{\partial x_k}+\dfrac{LV}{\varepsilon}=0 \label{GEQSYM}
\end{equation}
where $\varepsilon$ stands for a positive relaxation parameter
and the unknown $V=V(t,x)\in\R^n$ depends on the time variable 
$t\in \mathbb{R}_+$ and  on the space 
variable $x\in\mathbb{R}^d.$ 
The symmetric matrices valued maps 
$A^k$  ($k=1,\cdots,d$)  are assumed to be linear. 
We suppose that  \emph{partial} dissipation occurs, that is to say,
the $n\times n$ matrix $L$ is such that 
$L\!+\!{}^TL$ is nonnegative, but not necessarily definite positive 
(in other words, the term $LV$ concerns only  on a part of the solution).
In order to achieve global-in-time results, we shall 
suppose that the so-called  Shizuta-Kawashima condition 
is satisfied (see details in the next section) so 
that dissipation acts -- indirectly -- on all the components
of the solution. 
\smallbreak
We supplement System \eqref{GEQSYM} with an initial data $V_0$ at time $t=0$ and are concerned with the existence of global strong solutions in the case where $V_0$ is close 
to some constant state $\bar V$ such that $L\bar V=0,$
and to the relaxation limit $\varepsilon\to0.$
\smallbreak
Although our structure assumptions on \eqref{GEQSYM} 
 will be  a bit more restrictive than in  \cite{CBD2}, 
 our approach applies to  the following 
 compressible Euler system with relaxation: 
\begin{equation} \left\{ \begin{matrix}\partial_t \rho +\text{div}(\rho v)=0,\\[1ex] \partial_t (\rho v)+\div(\rho v \otimes v)+\nabla P+\dfrac1\e \rho v=0, \end{matrix} \right.
\label{CED1}\end{equation} 
with $\varepsilon>0$ and for a pressure law $P$  satisfying
\begin{equation}\label{Pression1}P(\rho)=A\rho^\gamma\ \text{ for }\gamma>1\andf A>0.\end{equation}
Above, $v=v(t,x)\in\R^d$ designates the velocity field and
$\rho=\rho(t,x)>0$ the density of the fluid.
\medbreak
It is well known that  classical  systems of conservation laws (that is with $LV=0$) 
supplemented with 
smooth data  admit local-in-time strong solutions that may develop singularities (shock waves) in finite time even if the initial data are small  perturbations of a constant solution  (see for instance the works by Majda in \cite{Majda} and Serre in \cite{Serre}).  
A  sufficient condition of  global existence  for small perturbations of a constant
solution $\bar V$ of \eqref{GEQSYM} is 
the \emph{total dissipation hypothesis}, namely the  dissipation  term $LV$ acts directly on   each component of the system
(that is $L\!+\!{}^T\!L$ is positive definite), making
the whole solution  to tend to $\bar V$  exponentially fast, cf. the work of Li in \cite{LiTT}. 
\smallbreak
However, in most physical situations that may be  modelled by systems of the form \eqref{GEQSYM},
    some components of the solution  satisfy conservation laws 
    and only \emph{partial} dissipation occurs, that is to say, the term $LV$ acts only   on a part of the solution. 
   Typically, this happens in  gas dynamics  where the mass density and entropy are conserved,  or in numerical schemes  involving conservation laws with  relaxation.

 In his 1984 PhD thesis \cite{Kawa1},  Kawashima 
 found out a sufficient structure condition 
  on  a class of hyperbolic-parabolic  systems 
containing  \eqref{GEQSYM} 
  guaranteeing  the global existence of strong solutions for perturbations  of a constant state $\bar V$
 (see also \cite{SK}). 
 This criterion, now called the (SK) condition,
 has been revisited later by Yong in \cite{Yong}
 who noticed that the existence of a (dissipative) entropy which provides a suitable symmetrization of the system compatible with 0-th order term allows to get a global existence result for small data in $H^s$ spaces with $s>\frac{d}{2}+1.$
Later, by taking advantage of the properties of the Green kernel of the linearized system around $\bar V$ and on the Duhamel formula,  Bianchini, Hanouzet and Natalini in \cite{BHN} pointed  out the 
convergence of global solutions to $\bar V$ in $L^p,$    with the rate ${\mathcal O}(t^{-\frac{d}{2}(1-\frac{1}{p})})$ when $t\rightarrow\infty,$  for all $p\in[\min\{d,2\},\infty]$.   In \cite{KYDecay}, Kawashima and Yong proved 
decay estimates in  Sobolev spaces and, a few years ago,   Kawashima and Xu in \cite{XK1} obtained a global existence result for small data in critical   non-homogeneous Besov spaces.
 \medbreak
 In \cite{BZ},  Beauchard and Zuazua  developed a new and systematic approach  that allows to establish global existence  and to describe large time behavior   of solutions to   partially dissipative systems.
Looking at  the linearization of  System \eqref{GEQSYM} around a constant solution $\bar V$, namely
(denoting from now on $\d_t\triangleq \frac{\partial}{\partial t}$ and $\d_k\triangleq \frac{\partial}{\partial x_k}$),
\begin{equation} \label{LinearBZ}
\d_tZ+\sum_{k=1}^m\bar A^k\d_kZ=-\frac{LZ}\varepsilon\with \bar A^k\triangleq A^k(\bar V),
\end{equation}
they showed that the (SK) condition is equivalent to the Kalman maximal rank condition on the matrices $\bar A^k$ and $L.$
More importantly, they  introduced a Lyapunov functional 
equivalent to the $L^2$ norm 
 that encodes enough information to recover all the dissipative properties of \eqref{LinearBZ}.  
 Considering such a functional  is motivated by the classical (linear) control theory of ODEs, 
 and has some connections with  Villani's work on hypocoercivity  \cite{Villani}.
 Back to the nonlinear system \eqref{GEQSYM}, Beauchard and Zuazua  obtained  the existence
of global smooth solutions  for perturbations in $H^s$ 
with $s>d/2+1$ of a constant equilibrium $\bar V$
that satisfies Condition (SK).  Furthermore, using arguments borrowed from  Coron’s return method \cite{Coron},
they were able to  achieve  certain cases where (SK)  does not hold.
\medbreak
Using the techniques from \cite{BZ} and a deep analysis of the low frequencies of the solution, in \cite{CBD1,CBD2}, we investigated  the global existence 
issue for these systems in critical homogeneous Besov spaces first in the one-dimensional, and then in the multi-dimensional setting.

Our aim here is to extend these results to a  more general 
functional framework  where the  low frequencies of the solution are only bounded in some $L^p$-type space with $p>2.$ In this way, 
 one can take initial data that are less decaying at infinity
 than those that have been considered so far. 
 To compare with the prior works, one can keep in mind 
 the following chain of embedding that holds true for all $s>1+d/2$ and $p\geq2$:
$$H^{s}\hookrightarrow \B^{\frac{d}{2}+1}_{2,1}\hookrightarrow \dot{\B}^{\frac{d}{2}}_{2,1}\cap \dot{\B}^{\frac{d}{2}+1}_{2,1}\hookrightarrow \dot {\B}^{\frac{d}{p},\frac{d}{2}+1}_{p,2,1}\hookrightarrow \mathcal{C}^1_b.$$
The left space corresponds to the classical theory,  
generalized to the nonhomogeneous Besov space 
$\B^{\frac{d}{2}+1}_{2,1}$ by Kawashima and Xu in \cite{XK1}. 
The theory in $\dot{\B}^{\frac{d}{2}}_{2,1}\cap \dot{\B}^{\frac{d}{2}+1}_{2,1}$
has been performed by us in \cite{CBD2} while 
$\dot {\B}^{\frac{d}{p},\frac{d}{2}+1}_{p,2,1}$ -- the aim of the present paper -- 
amounts to assuming that the low (resp. high) frequencies of 
the data are in $\dot{\B}^{\frac{d}{p}}_{p,1}$ (resp. $\dot{\B}^{\frac{d}{2}+1}_{2,1}$).
\medbreak
Looking for global well-posedness results with low and high frequencies
of the data belonging to different type of Besov spaces
originates from the study of  the compressible Navier-Stokes system in \cite{NSCLP,CMZ}. There, the authors proved  a global well-posedness result with the high frequencies of the solution belonging to some homogeneous Besov space based on $L^p$  with $p\geq 2$ while the low frequencies are in a space related
to $L^2$. Essentially, this is possible because the  high frequencies' eigenvalues of the linearized system (in the Fourier variables) are  real-valued. 
In our case, the situation is reversed: the low frequencies' eigenvalues are  real
and can thus be handled in a $L^p$-type framework, 
while the high frequencies are complex-valued.
\medbreak
To achieve our results, we need to reconsider the way of treating the low frequencies compared to  \cite{CBD2}.
Similarly to what we did in \cite{CBD1}, we shall exhibit a damped mode with better decay properties than the whole solution and use it to diagonalize and decouple the system into a purely damped equation and a parabolic one. This, as well as a link between the (SK) condition and the ellipticity of a certain operator will be the keys to obtaining a priori estimates in the $L^p$ framework in low frequencies. To handle  the high frequencies, we shall work out a Lyapunov functional  in Beauchard-Zuazua's style, that is equivalent to the norm that we aim at controlling. We shall proceed as in \cite{CBD2}
but will take advantage of  new commutator and composition lemma to handle the low frequencies part of the nonlinearities that do not belong to $L^2$. This leads to a global  well-posedness result for  \eqref{GEQSYM}
(Theorem \ref{ThmExistLp}).
\medbreak
The advantage of using homogeneous norms is that
it allows us to keep  track of  the dependency of the estimates of the solution 
with respect to $\varepsilon$ by a mere rescaling, and to 
get uniform estimates in the asymptotics $\varepsilon\to0.$
This is a crucial step for investigating the  infinite relaxation limit
for general systems of type \eqref{GEQSYM}. 
As an application, we shall consider  the isentropic compressible Euler with damping, and justify its relaxation limit toward the porous media equation after performing the following so-called  diffusive rescaling:  \begin{equation}\label{DiffusiveRescaling}(\widetilde{\rho}^\varepsilon,\widetilde{v}^\varepsilon)(t,x)\triangleq (\rho,\varepsilon^{-1}v)(\varepsilon^{-1}t,x).\end{equation}
Then, System \eqref{CED1}  becomes
\begin{equation*}
\left\{
\begin{array}
[c]{l}%
\partial_{t}\rho^{\varepsilon}+\div(\rho^{\varepsilon} v^{\varepsilon})=0,\\[1ex]
\varepsilon^2 \partial_{t}(\rho^{\varepsilon}v^{\varepsilon})+\varepsilon^2\textrm{div} (\rho^{\varepsilon}v^{\varepsilon}\otimes v^{\varepsilon})+\nabla P(\rho^{\varepsilon})+\rho^{\varepsilon}v^{\varepsilon}=0.
\end{array}
\right.
\end{equation*}
As $\varepsilon\rightarrow0$,  we expect
$\rho^\varepsilon$
and  $u^{\varepsilon}$ to tend  to the solution of the following porous media equations:
 \begin{equation*}
 \left\{
 \begin{array}
 [c]{l}%
\partial_t\cN-\Delta P(\cN)=0,\\[1ex] 
 \cN v+\nabla P(\cN)=0,
 \end{array}
 \right.
 \end{equation*}
 where the second equation corresponds to Darcy's law. 
 \medbreak
The rigorous derivation of the  relaxation limit 
for systems of conservation laws
  can be tracked back to the work of Chen, Levermore and Liu in \cite{chen1994}. For the damped Euler equations, it was performed in \cite{mar0,CoulombelGoudon,mar1,hsiao1,Junca,XuWang,CoulombelLin} in various settings.

In \cite{Junca}, Junca and Rascle were able to justify the relaxation process from the damped Euler equations to the porous media equation in the one-dimensional setting for large global-in-time $BV$ solution and to provide an explicit convergence rate. Their approach was based on a stream function technique which is related to the Lagrangian mass coordinates. More recently, following the same approach as in \cite{Junca}, Peng et al. in \cite{Peng2019} justified the convergence of partially dissipative hyperbolic systems to parabolic systems globally-in-time in one space dimension and derived a convergence rate of the relaxation process. Using similar techniques, Liang and Shuai in \cite{LiangShuai} generalized the previous result to the multi-dimensional periodic setting (in the torus $\mathbb{T}^3$). As their result relies on the Poincaré inequality to control mixed partial derivative terms that do not appear in the one dimensional case, their method cannot handle global-in-space norms. 

In the context of smooth solutions, Coulombel, Goudon and Lin in \cite{CoulombelLin,CoulombelGoudon} proved the strong convergence locally-in-space in Sobolev spaces with regularity index $s>\frac{d}{2}+1.$ 
These works were extended by  Xu, Kawashima and Wang in \cite{XuWang,XK1E} 
to the setting of  non-homogeneous critical Besov spaces.
\smallbreak
In the present paper, our focus is on critical  solutions 
with regularity $\dot\B^{\frac dp}_{p,1}$ in low frequencies and 
$\dot\B^{\frac d2+1}_{2,1}$ in high frequencies. 
For the compressible Euler system \eqref{CED1}, 
we shall justify the \emph{strong} convergence to the porous media equation
with \emph{an explicit convergence rate}. This will be possible thanks to the uniform bounds from Theorem \ref{ThmExistLp} on the damped mode, a suitable threshold between the low and high frequencies depending on $\varepsilon$ and the diffusive rescaling \eqref{DiffusiveRescaling}. This result is given in Theorem \ref{Thm-relax}.
\medbreak
The rest paper is arranged as follows. In the first section, we specify the structure of the class  of partially dissipative hyperbolic systems we aim at considering and state our main results.
In the next section, we  give some insight on the strategies of the proofs.   
Section  \ref{s:GWP}   is devoted to the proof of our global existence
result for a class of partially dissipative systems satisfying the Shizuta-Kawashima condition. 
 In section \ref{s:lim}, we justify rigorously the relaxation limit of the compressible Euler system with damping to the porous media equation and derive an explicit convergence rate of the process.
 Some technical results are proved or recalled in Appendix.

\medbreak\noindent{\bf Acknowledgments.}  
 The authors are partially  supported  by the ANR project INFAMIE (ANR-15-CE40-0011). 


\section{Hypotheses and results} \label{s:hyp}

Before presenting our main results, 
motivating the structure assumptions that will 
be made on System \eqref{GEQSYM} is in order.
To this end, fix some constant reference solution 
 $\bar{V}$ (hence satisfying $L\bar{V}=0$). Setting $Z\triangleq V-\bar V,$ System \eqref{GEQSYM} becomes
  \begin{equation}\label{GE}
  \d_tZ+\sum_{k=1}^d  A^k(V)\d_kZ+\frac{LZ}{\varepsilon}=0.\cdotp\end{equation}
We assume that: 
\begin{enumerate}
\item[(i)] The range and kernel of operator  $L$ satisfy   
$$\mathbb{R}^n={\rm Ker}(L) \overset{\perp}{\oplus} {\rm Im}(L). $$
In what follows, we shall assume  with no loss of generality that 
$\ker L=\R^{n_1}\times\{0\}$ and  ${\rm Im}(L)=\{0\}\times\R^{n_2}$
with $n_1+n_2=n.$
\smallbreak\item[(ii)]  The restriction  $L_2$ of $L$
to ${\rm Im}(L)$ (identified to $\R^{n_2}$)
satisfies, for some $c>0,$ the positivity condition: 
  \begin{equation}\label{partdissip2}
\forall V_2\in \R^{n_2},\; (L_2V_2|V_2)\geq c|V_2|^2.
\end{equation}
\end{enumerate}
  \smallbreak
 Note that,  using the decompositions,
$$ V=\left(\begin{matrix}V_1 \\ V_2 \end{matrix}\right),\qquad
 LV=\left(\begin{matrix} 0\\L_2V_2 \end{matrix}\right) \andf A^k=\left(\begin{matrix}  A^k_{1,1} &  A^k_{1,2}\\ A^k_{2,1} &  A^k_{2,2} \end{matrix}\right),$$
 System \eqref{GE} can be rewritten:
\begin{equation} \left\{ \begin{matrix}\displaystyle \partial_tZ_1 + \sum_{k=1}^d\left(A_{1,1}^k(V)\partial_{k}Z_1+A_{1,2}^k(V)\partial_{k}Z_2\right)=0,\\ \displaystyle \partial_tZ_2 + \sum_{k=1}^d\left(A_{2,1}^k(V)\partial_{k}Z_1+A_{2,2}^k(V)\partial_{k}Z_2\right)+\frac{L_2Z_2}{\varepsilon}=0.\end{matrix} \right. \label{GE2}
\end{equation} 
Recall that we assumed from the very beginning that:
\begin{equation}\label{AkLinear}
\hbox{all the maps }\ Z\to A^k(\bar V+Z)\ \hbox{ are linear}.
\end{equation}
In order to obtain our results,  we shall additionally assume that
for all $k\in\{1,\cdots,d\},$
\begin{equation}\label{StructAssum}
\quad A_{1,1}^k(\bar V)=0 \andf  Z\mapsto A^k_{1,1}(\bar V+Z)\quad\hbox{is linear with respect to } \ Z_2.
\end{equation}
\begin{Rmq} The condition of linearity for $A^k(V)$ 
may be just technical. However, in our functional framework, 
we do not know how to handle more than quadratic nonlinearities when performing estimates in the high frequencies regime. 

At the same time, in contrast with \cite[Th. 2.3]{CBD2}, we do not have 
to assume here that $A^k_{1,2}$ and $A^k_{2,1}$ are linear with respect to $Z_2.$ This is due to a better treatment of the corresponding terms
(use of suitable commutator estimates recalled in Appendix). 
\end{Rmq}

In order to explain the  supplementary conditions 
that we need in order to get our main results, 
 let us consider  the linearization  of \eqref{GE} about $\bar V,$ namely,
 in the case $\varepsilon=1$:
   \begin{equation}\label{eq:Zlinear} \d_tZ+\sum_{k=1}^d\bar A^k \d_kZ + LZ=0 \with  \bar A^k:=A^k(\bar V)\ \hbox{ for }\ k=1,\cdots,d.\end{equation}
  Denoting by $\xi\in\R^d$ the Fourier variable corresponding 
  to $x\in\R^d$, we have
  $$\d_t\wh Z+i\sum_{k=1}^d\bar A^k \xi_k\wh Z + L\wh Z=0.$$
  Setting  $\xi= \rho \omega$ with $\omega\in\S^{d-1}$ and $\rho=|\xi|,$ the above system rewrites
  \begin{equation}\label{eq:Zomega}
  \d_t\wh Z +i\rho M_\omega \wh Z +L\wh Z = 0\with 
  M_\omega\triangleq \sum_{k=1}^d \omega_k\bar A^k.
  \end{equation}
  Clearly, \eqref{partdissip2} implies that  
   \begin{equation}\label{partdissip3}    (L\wh Z|\wh Z)_{L^2} \geq c\|\wh Z_2\|_{L^2}^2\quad\hbox{for all }
 \ Z\in L^2(\R^d;\R^n).\end{equation}
 This provides dissipation on the directly damped component $Z_2$.
 In order to have dissipation for \emph{all} the components of the solution 
 $Z$ to \eqref{eq:Zlinear}, Shizuta and Kawashima \cite{Kawa1,SK}
 proposed the following:
\begin{Def}
System \eqref{GEQSYM} verifies the (SK) condition  at $\bar{V}$ if, for all $\omega\in\S^{d-1},$ we have  at the same time  $L\phi=0$  and 
 $\lambda  \phi+M_\omega\phi=0$ for some   $\lambda\in\mathbb{R},$ 
 if and only if  $\phi=0_{\R^n}$. 
\end{Def}

 As we shall see in the last section of the paper, the compressible Euler system with damping, rewritten
 in suitable variables, satisfies Condition (SK) about any constant state with positive density and null velocity. 
\medbreak
Condition (SK) turns out to be equivalent 
to the strong stability of System \eqref{eq:Zlinear} and it
has been shown in \cite{SK} that if it is satisfied, then solutions to \eqref{eq:Zlinear} satisfy for all $t\geq0,$
\begin{equation}\begin{aligned} \label{Intro:Decomphfbf}
    \|Z^h(t)\|_{L^2(\R^d,\R^n)}&\leq Ce^{-\lambda t}\|Z_0\|_{L^2(\R^d,\R^n)}, 
    \\ \|Z^\ell(t)\|_{L^\infty(\R^d,\R^n)}&\leq Ct^{-\frac{d}{2}}\|Z_0\|_{L^1(\R^d,\R^n)},
\end{aligned}\end{equation}
where $Z^h$ and $Z^\ell$ correspond, respectively, to the high and low frequencies of the solution. Thus, at the linear level, we observe that, 
provided (SK) is satisfied:
\begin{itemize}
    \item in high frequencies, the solution decays exponentially to $0$;
    \item in low frequencies, the solution behaves as the solutions of the heat equation.
    \end{itemize}
Even though Condition (SK) ensures strong linear stability, 
it does not tell us how to achieve the corresponding quantitative estimates.
In \cite{BZ}, Beauchard and Zuazua proposed a new approach 
to build an explicit Lyapunov functional that allows to control 
suitable norms of the solution.  We took advantage of it in \cite{CBD2}
to  prove global existence
and decay estimates for solutions to \eqref{GEQSYM} in a critical $L^2$-type
framework. 
In the present paper, we will combine Beauchard and Zuazua's strategy 
to control the high frequencies of the solution, with 
estimates for a suitable `damped mode' so as to achieve $L^p$-type
estimates for the low frequencies of the solution.


At this stage, introducing  a few notations is in order. 
First,  we fix a homogeneous  Littlewood-Paley decomposition $(\ddj)_{j\in\Z}$
that is defined  by 
$$\ddj\triangleq\varphi(2^{-j}D)\with \varphi(\xi)\triangleq \chi(\xi/2)-\chi(\xi)$$
where $\chi$ stands for a  smooth function  with range in $[0,1],$ supported in the open ball $B(0,4/3)$ and
such that $\chi\equiv1$ on the closed ball $\bar{B}(0,3/4).$ 
We further state:
$$\dot S_j\triangleq \chi(2^{-j}D) \quad\hbox{for all }\ j\in\Z$$
and define  $\mathcal{S}'_h$ to be the set of tempered distributions $z$  such 
that $$\lim_{j\to-\infty}\|\dot S_jz\|_{L^\infty}=0.$$ 
Following  \cite{HJR}, we introduce the  homogeneous Besov semi-norms:
$$\|z\|_{\dot\B^s_{p,r}}\triangleq \bigl\| 2^{js}\|\ddj z\|_{L^p(\R^d)}\bigr\|_{\ell^r(\Z)},$$
then  define the homogeneous Besov spaces $\dot\B^s_{p,r}$ (for any $s\in\R$ and $(p,r)\in[1,\infty]^2$)
 to be the subset of $z$ in  $\mathcal{S}'_h$ such that $\|z\|_{\dot\B^s_{p,r}}$ is finite. 
\smallbreak
Use from now on the shorthand notation 
\begin{equation}\label{eq:zj}
\ddj z\triangleq z_j.
\end{equation}
For any fixed threshold $J\in\Z,$
we define the low and high frequency parts of any   $z\in\mathcal{S}'_h$ to be:
$$z^{\ell,J}:=\sum_{j\leq J-1}z_j \andf z^{h,J}:=\sum_{j\geq J}z_j. $$
Likewise, we  set\footnote{For technical reasons, we need
a small overlap between low and high frequencies.} if $r<\infty,$
$$ \norme{z}^{\ell,J}_{\dot{\mathbb{B}}^{s}_{p,r}}\triangleq \biggl(\sum_{j\leq J+1}\bigl(2^{js}\|\dot{\Delta}_jz\|_{L^p}\bigr)^r\biggr)^{\frac{1}{r}} \andf
\norme{z}^{h,J}_{\dot{\mathbb{B}}^{s}_{p,r}}\triangleq \biggl(\sum_{j\geq J}\bigl(2^{js}\|\dot{\Delta}_jz\|_{L^p}\bigr)^r\biggr)^{\frac{1}{r}}\cdotp$$ 
Whenever the value of $J$ is clear from the context,  it is omitted in the notations.
\medbreak
For any Banach space $X,$ index $\rho$ in $[1,\infty]$  and time $T\in[0,\infty],$ we use the notation 
$\|z\|_{L_T^\rho(X)}\triangleq  \bigl\| \|z\|_{X}\bigr\|_{L^\rho(0,T)}.$
If $T=\infty$, then we  just  write $\|z\|_{L^\rho(X)}.$
Finally, in the case where $z$ has $n$ components $z_k$ in $X,$ we 
slightly abusively keep the notation  $\norme{z}_X$ to 
mean $\sum_{k\in\{1,\cdots,n\}} \norme{z_k}_X$. 

\bigbreak
The first part of the paper is devoted to proving the following 
 global existence result with uniform estimates with respect 
 to the relaxation parameter for System~\eqref{GE}.
\begin{Thm} \label{ThmExistLp} Let $p\in[2,4]$ if $1\leq d\leq 4,$
or  $p\in[2,\frac{2d}{d-2}]$ if $d\geq5.$  Assume that the (SK) condition and  \eqref{AkLinear}-\eqref{StructAssum} are satisfied. There exist $k_p\in\Z$ and $c_0>0$ such that for 
all $\varepsilon>0,$
 if we assume that
$Z_0^{\ell,J_\varepsilon}\in{\dot{\mathbb{B}}^{\frac{d}{p}}_{p,1}}$ and $Z_0^{h,J_\varepsilon}\in{\dot{\mathbb{B}}^{\frac{d}{2}+1}_{2,1}}$ with
$$ \norme{Z_0}^{\ell,J_\varepsilon}_{\dot{\mathbb{B}}^{\frac{d}{p}}_{p,1}}+ \varepsilon\norme{Z_0}^{h,J_\varepsilon}_{\dot{\mathbb{B}}^{\frac{d}{2}+1}_{2,1}} \leq c_0\andf J_\varepsilon\triangleq\left\lfloor\rm -log_2\,\varepsilon\right\rfloor+k_p,$$
then System \eqref{GE} admits a unique global solution $Z$ in the space $E_p^{J_\varepsilon}$ defined by 
\begin{eqnarray*}
&&Z_1^{\ell,J_\varepsilon}\in \mathcal{C}_b(\R^+;\dot{\mathbb{B}}^{\frac{d}{p}}_{p,1})\cap L^1(\mathbb{R}^+;\dot{\mathbb{B}}^{\frac{d}{p}+2}_{p,1}), \;\;\;\; Z^{h,J_\varepsilon}\in \mathcal{C}_b(\R^+;\dot{\mathbb{B}}^{\frac{d}{2}+1}_{2,1})\cap L^1(\mathbb{R}^+,\dot{\mathbb{B}}^{\frac{d}{2}+1}_{2,1}), \\ &&
Z_2^{\ell,J_\varepsilon}\in \mathcal{C}_b(\R^+;\dot{\mathbb{B}}^{\frac{d}{p}}_{p,1})\cap L^1(\mathbb{R}^+;\dot{\mathbb{B}}^{\frac{d}{p}+1}_{p,1}), \:\:\: 
W_\varepsilon \in  L^1(\mathbb{R}^+;\dot{\mathbb{B}}^{\frac{d}{p}}_{p,1}) \andf Z_2\in L^2(\mathbb{R}^+;\dot{\mathbb{B}}^{\frac{d}{p}}_{p,1})\\
&&\with W_\varepsilon\triangleq \dfrac{Z_2}{\varepsilon}+\sum_{k=1}^d L_2^{-1}\bigl(A^k_{2,1}(V)\d_kZ_1+A_{2,2}^k(V)\d_kZ_2\bigr)\cdotp
\end{eqnarray*}
Moreover, we have the following  bound:
$$X_{p,\varepsilon}(t)\lesssim\norme{Z_0}^{\ell,J_\varepsilon}_{\dot{\mathbb{B}}^{\frac{d}{p}}_{p,1}}+ \varepsilon\norme{Z_0}^{h,J_\varepsilon}_{\dot{\mathbb{B}}^{\frac{d}{2}+1}_{2,1}}
\quad\hbox{for all } t\geq 0,$$  where 
$$\displaylines{
X_{p,\varepsilon}(t)\triangleq\norme{Z}^{\ell,J_\varepsilon}_{L^\infty_t(\dot{\mathbb{B}}^{\frac{d}{p}}_{p,1})}+\varepsilon\norme{Z}^{h,J_\varepsilon}_{L^\infty_t(\dot{\mathbb{B}}^{\frac{d}{2}+1}_{2,1})}
+\varepsilon\norme{Z_1}^{\ell,J_\varepsilon}_{L^1_t(\dot{\mathbb{B}}^{\frac{1}{p}+2}_{p,1})}+\norme{Z}^{h,J_\varepsilon}_{L^1_t(\dot{\mathbb{B}}^{\frac{d}{2}+1}_{2,1})}+\norme{Z_2}^\ell_{L^1_t(\dot{\mathbb{B}}^{\frac{d}{p}+1}_{p,1})}
\hfill\cr\hfill
+\norme{W_\varepsilon}_{L^1_t(\dot{\mathbb{B}}^{\frac{d}{p}}_{p,1})}+\varepsilon^{-\frac12}\norme{Z_2}_{L^2_t(\dot{\mathbb{B}}^{\frac{d}{p}}_{p,1})}.}$$ 
\end{Thm}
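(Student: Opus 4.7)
My proof plan follows the standard scheme for such global well-posedness results: build approximate solutions, derive uniform a priori estimates for the functional $X_{p,\varepsilon}(t)$ on the maximal interval of existence, and pass to the limit. Uniqueness and the continuation criterion are then obtained by routine arguments, so the whole difficulty concentrates on closing the estimate
$$X_{p,\varepsilon}(t)\lesssim \norme{Z_0}^{\ell,J_\varepsilon}_{\dot\B^{d/p}_{p,1}}+\varepsilon\norme{Z_0}^{h,J_\varepsilon}_{\dot\B^{d/2+1}_{2,1}}+\bigl(X_{p,\varepsilon}(t)\bigr)^2$$
uniformly in $\varepsilon,$ with the threshold $J_\varepsilon\sim-\log_2\varepsilon$ chosen so that the crossover between the two functional regimes matches the frequency $2^j\sim\varepsilon^{-1}$ at which the dissipative and hyperbolic parts of the linearized symbol balance.

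For the low-frequency estimates, I would introduce the damped mode $W_\varepsilon$ of the statement and derive its evolution equation: after differentiating in time and using the $Z_2$-equation of \eqref{GE2}, one finds that $W_\varepsilon$ satisfies $\partial_t W_\varepsilon + W_\varepsilon/\varepsilon = \cF$ where $\cF$ contains only terms that are either quadratic or of order one in space applied to $Z_1$ and $Z_2.$ Solving the relation defining $W_\varepsilon$ for $Z_2$ and plugging back into the first line of \eqref{GE2} yields a parabolic-type equation for $Z_1$ of the form $\partial_t Z_1 - \sum_{j,k}A^k_{1,2}(\bar V)L_2^{-1}A^j_{2,1}(\bar V)\partial_j\partial_k Z_1 = \cG + \varepsilon\text{-terms}$; the ellipticity of the second-order operator here is exactly the reformulation of the (SK) condition for the low modes, as indicated in the introduction. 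Applying $\ddj$ for $j\leq J_\varepsilon+1$ and using standard $L^p$ maximal regularity for the heat semigroup and damping estimates for the ODE part then provide the $L^1_t(\dot\B^{d/p+2}_{p,1})$ bound for $\varepsilon Z_1,$ the $L^1_t(\dot\B^{d/p+1}_{p,1})$ bound for $Z_2,$ the $L^1_t(\dot\B^{d/p}_{p,1})$ bound for $W_\varepsilon,$ and by interpolation the $\varepsilon^{-1/2}L^2_t(\dot\B^{d/p}_{p,1})$ bound on $Z_2,$ all in low frequency.

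For the high-frequency part, since the eigenvalues of the symbol are complex there and $L^p$ Fourier multiplier theory for $p\neq2$ is not available, I would stay in $L^2$ and reproduce the Beauchard--Zuazua construction exactly as in \cite{CBD2}: on each block $\ddq Z$ with $q\geq J_\varepsilon$ I would build a Lyapunov functional
$$\cL_q^2\triangleq \norme{\ddq Z}_{L^2}^2+\sum_{\ell=1}^{n-1}\alpha_\ell\,\varepsilon\,\min(1,2^{-\ell q}\varepsilon^{\ell-1})\,\bigl(\Psi_\ell(D)\ddq Z\,\big|\,\ddq Z\bigr)_{L^2}$$
with suitable skew-symmetric multipliers $\Psi_\ell$ built from products of $L$ and $\bar A^k,$ and tiny weights $\alpha_\ell,$ chosen so that $\cL_q$ is equivalent to $\|\ddq Z\|_{L^2}$ and satisfies $\tfrac12\tfrac{d}{dt}\cL_q^2+\tfrac{c}{\varepsilon}\min(1,2^{2q}\varepsilon^2)\cL_q^2\leq(\text{nonlinearities}).$ Because $2^q\geq 2^{J_\varepsilon}\gtrsim\varepsilon^{-1},$ the damping factor is $\gtrsim 1$ in high frequencies, which yields after summation a bound on the factor $\varepsilon\,\|Z\|_{L^\infty_t(\dot\B^{d/2+1}_{2,1})}^{h}+\|Z\|_{L^1_t(\dot\B^{d/2+1}_{2,1})}^{h}$ of $X_{p,\varepsilon}.$

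The main obstacle is the nonlinear estimates: the products $A^k(V)\partial_k Z$ mix components with different integrability indices, and the low-frequency part of a high-frequency product need not belong to $L^2,$ so the composition and product rules from \cite{CBD2} no longer apply directly. I would handle these terms via a careful paraproduct splitting, using the hypothesis \eqref{StructAssum} (which ensures that the worst quadratic term in the $Z_1$-equation is at least quadratic in the dissipated component $Z_2$) together with commutator estimates in hybrid Besov norms from the Appendix to transfer derivatives onto the smoother factor, and composition lemmas in $\dot\B^{d/p}_{p,1}$ to control terms like $A^k(V)-A^k(\bar V).$ Once all the nonlinear contributions have been shown to be bounded by $X_{p,\varepsilon}(t)^2$ uniformly in $\varepsilon,$ a standard bootstrap on the continuous function $X_{p,\varepsilon}$ closes the estimate under the smallness condition on the data, and the existence of the global solution in $E_p^{J_\varepsilon}$ follows by combining the approximation scheme with these uniform bounds.
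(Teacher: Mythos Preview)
Your proposal is correct and follows essentially the same approach as the paper: low frequencies via the damped mode $W$ and the parabolic equation for $Z_1$ coming from the ellipticity reformulation of (SK), high frequencies via the Beauchard--Zuazua Lyapunov functional, and closure of the nonlinear terms via hybrid product laws and commutator estimates. Two small points worth noting: the paper first rescales to $\varepsilon=1$ via \eqref{eq:rescaling}, which maps $J_\varepsilon$ to a fixed $J_1$ and removes all $\varepsilon$-tracking from the a priori estimates (you work directly with $\varepsilon$, which is equivalent but heavier); and uniqueness is not quite ``routine'' in this hybrid framework---the paper has to work in the auxiliary space $F_p(T)$ with low-frequency index $d/p-d/p^*$ and prove separately that the difference of two solutions actually lies there.
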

\begin{Rmq}
The choice of the threshold $J_\varepsilon$ 
corresponds\footnote{The value of  $k_p$ is given by our low frequencies analysis. At some point, we need the threshold to be small enough in 
order to close the estimates. As pointed out in \cite{CBD2}, for $p=2,$
one can take $k_p=0.$} to the place where the $0$-order terms and the 1-order terms have the same strength (parameter included). It can be easily deduced from a spectral analysis of the linearized system.
\end{Rmq}
The above theorem can  be applied to 
 the isentropic compressible Euler equation with damping
 \eqref{CED1} after suitable symmetrization. 
Indeed, introduce the following  `sound speed': 
\begin{equation}\label{eq:c}  c\triangleq\frac{2}{\gamma-1}\sqrt{\frac{\d P}{\d\rho}}=\frac{(4\gamma A)^\frac{1}{2}}{\gamma-1}\rho^{\frac{\gamma-1}{2}}.\end{equation}
 Then, setting $\check{\gamma}=\dfrac{\gamma-1}{2}$ and $\wt c=c-\bar{c}$,
we can rewrite \eqref{CED1}   under the form~:
\begin{equation} \left\{ \begin{aligned} &\partial_t\wt c+v\cdot\nabla \wt c+\check{\gamma}(\wt c+\bar{c})\textrm{div}\,v=0,\\ 
&\partial_tv+v\cdot\nabla v+\check{\gamma}(\wt c+\bar{c})\nabla \wt c+\frac1\e v=0. \end{aligned} \right.\label{CED4}
\end{equation} 
The above system is symmetric and satisfies
both Conditions (SK) and \eqref{AkLinear} - \eqref{StructAssum}.
Then, applying Theorem \ref{ThmExistLp} yields:
\begin{Thm} \label{ThmExistLpCED} 
Fix some positive constant density $\bar\rho.$ 
Let $p\in[2,4]$ if $1\leq d\leq 4,$
or  $p\in[2,\frac{2d}{d-2}]$ if $d\geq5.$ 
There exist $k_p\in\Z$ and $c_0>0$ such that for
all $\varepsilon>0,$ if we set
$J_\varepsilon\triangleq\left\lfloor\rm -log_2\varepsilon\right\rfloor+k_p$ and  assume that
$ (c-\bar{c})^{\ell,J_\varepsilon},v_0^{\ell,J_\varepsilon}\in{\dot{\mathbb{B}}^{\frac{d}{p}}_{p,1}}$ and 
$ (c-\bar{c})^{h,J_\varepsilon},v_0^{h,J_\varepsilon}\in{\dot{\mathbb{B}}^{\frac{d}{2}+1}_{2,1}}$ with

$$ \norme{(  c-\bar{c},v_0)}^{\ell,J_\varepsilon}_{\dot{\mathbb{B}}^{\frac{d}{p}}_{p,1}}+ \varepsilon\norme{(  c-\bar{c},v_0)}^{h,J_\varepsilon}_{\dot{\mathbb{B}}^{\frac{d}{2}+1}_{2,1}} \leq c_0,
$$
then System \eqref{CED4} admits a unique global solution 
$(c,v)$ with
$( c-\bar{c},v)$ in the space $E_p^{J_\varepsilon}$ defined by 
\begin{eqnarray*}
&&( c-\bar{c})^{\ell,J_\varepsilon}\in \mathcal{C}_b(\R^+;\dot{\mathbb{B}}^{\frac{d}{p}}_{p,1})\cap L^1(\mathbb{R}^+;\dot{\mathbb{B}}^{\frac{d}{p}+2}_{p,1}), \;\;\;\;  ( c-\bar{c})^{h,J_\varepsilon}\in \mathcal{C}_b(\R^+;\dot{\mathbb{B}}^{\frac{d}{2}+1}_{2,1})\cap L^1(\mathbb{R}^+;\dot{\mathbb{B}}^{\frac{d}{2}+1}_{2,1}), \\ &&
v^{\ell,J_\varepsilon}\in \mathcal{C}_b(\R^+;\dot{\mathbb{B}}^{\frac{d}{p}}_{p,1})\cap L^1(\mathbb{R}^+;\dot{\mathbb{B}}^{\frac{d}{p}+1}_{p,1}), \;\;\;\; v^{h,J_\varepsilon}\in \mathcal{C}_b(\R^+;\dot{\mathbb{B}}^{\frac{d}{2}+1}_{2,1})\cap L^1(\mathbb{R}^+;\dot{\mathbb{B}}^{\frac{d}{2}+1}_{2,1}),  \\&& \frac{v}{\varepsilon}+\check{\gamma}c\nabla c \in  L^1(\mathbb{R}^+;\dot{\mathbb{B}}^{\frac{d}{p}}_{p,1})\andf v\in L^2(\mathbb{R}^+;\dot{\mathbb{B}}^{\frac{d}{p}}_{p,1}).
\end{eqnarray*}
Moreover, we have the following a priori bound:
$$X_{p,\varepsilon}(t)\lesssim\norme{( c-\bar{c},v_0)}^{\ell,J_\varepsilon}_{\dot{\mathbb{B}}^{\frac{d}{p}}_{p,1}}+ \varepsilon\norme{( c-\bar{c},v_0)}^{h,J_\varepsilon}_{\dot{\mathbb{B}}^{\frac{d}{2}+1}_{2,1}}
\quad\hbox{for all } t\geq 0,$$  where 
$$\displaylines{
X_{p,\varepsilon}(t)\triangleq\norme{(  c-\bar{c},v)}^{\ell,J_\varepsilon}_{L^\infty_t(\dot{\mathbb{B}}^{\frac{d}{p}}_{p,1})}+\varepsilon\norme{(  c-\bar{c},v)}^{h,J_\varepsilon}_{L^\infty_t(\dot{\mathbb{B}}^{\frac{d}{2}+1}_{2,1})}
+\varepsilon\norme{ c-\bar{c}}^{\ell,J_\varepsilon}_{L^1_t(\dot{\mathbb{B}}^{\frac{d}{p}+2}_{p,1})}+\norme{v}^\ell_{L^1_t(\dot{\mathbb{B}}^{\frac{d}{p}+1}_{p,1})}\hfill\cr\hfill+\norme{(  c-\bar{c},v)}^{h,J_\varepsilon}_{L^1_t(\dot{\mathbb{B}}^{\frac{d}{2}+1}_{2,1})}
+\norme{\frac{v}{\varepsilon}+\check{\gamma}c\nabla c}_{L^1_t(\dot{\mathbb{B}}^{\frac{d}{p}}_{p,1})}+\varepsilon^{-\frac12}\norme{v}_{L^2_t(\dot{\mathbb{B}}^{\frac{d}{p}}_{p,1})}.}$$
\end{Thm}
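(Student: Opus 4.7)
The plan is to reduce Theorem \ref{ThmExistLpCED} to a direct application of Theorem \ref{ThmExistLp} to the symmetric system \eqref{CED4} at the equilibrium $\bar V = (\bar c, 0)$. The first step will be to justify the passage from \eqref{CED1} to \eqref{CED4}. From \eqref{eq:c} one gets $\rho = K c^{2/(\gamma-1)}$ for an explicit constant $K$; substituting this into the mass equation yields $\partial_t c + v\cdot\nabla c + \check\gamma c\,\div v = 0$, and the elementary identity $\nabla P(\rho)/\rho = \check\gamma\, c\,\nabla c$ turns the momentum equation into the second line of \eqref{CED4}. Setting $V = (\tilde c, v) \in \R\times\R^d$, one can then read off $n_1 = 1$, $n_2 = d$, $L_2 = I_d$ (which makes \eqref{partdissip2} trivial), and
\[
A^k(V) = \begin{pmatrix} v_k & \check\gamma(\tilde c+\bar c)\,e_k^T \\[1mm] \check\gamma(\tilde c+\bar c)\,e_k & v_k\, I_d \end{pmatrix}.
\]

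The structural hypotheses then follow by inspection of $A^k$. Each entry is affine in $V$, so \eqref{AkLinear} holds; the block $A^k_{1,1}(V) = v_k$ vanishes at $\bar V$ and depends linearly on $Z_2 = v$, so \eqref{StructAssum} holds as well (the off-diagonal blocks $A^k_{1,2}$ and $A^k_{2,1}$ depend only on $Z_1 = \tilde c$, and no condition on them is needed thanks to the Remark following \eqref{StructAssum}). For the Shizuta--Kawashima condition at $\bar V$, any $\phi = (\phi_1, 0) \in \ker L$ with $(\lambda I + M_\omega)\phi = 0$ must satisfy $\lambda \phi_1 = 0$ together with $\check\gamma \bar c\, \omega_j \phi_1 = 0$ for every $j$; since $\bar c > 0$ and $\omega\in\S^{d-1}$, this forces $\phi_1 = 0$, establishing (SK).

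At this point Theorem \ref{ThmExistLp} immediately delivers a unique global solution $Z = (\tilde c, v)$ in $E_p^{J_\varepsilon}$ together with the $X_{p,\varepsilon}$ estimate. The only remaining bookkeeping concerns the damped mode: using $L_2^{-1} = I_d$ one computes
\[
W_\varepsilon = \frac{v}{\varepsilon} + \check\gamma\, c\,\nabla c + (v\cdot\nabla) v,
\]
and the bilinear remainder $(v\cdot\nabla)v$ is controlled in $L^1_t(\dot\B^{\frac dp}_{p,1})$ by the standard product laws in Besov spaces combined with the $L^\infty_t(\dot\B^{\frac dp}_{p,1})$ and $L^1_t(\dot\B^{\frac dp+1}_{p,1})$ bounds on $v$ already present in $X_{p,\varepsilon}(t)$. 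The bound on $W_\varepsilon$ given by Theorem \ref{ThmExistLp} therefore transfers to the announced control on $v/\varepsilon + \check\gamma\, c\,\nabla c$. I do not expect any real obstruction beyond these verifications — the hard analytical work has already been carried out in Theorem \ref{ThmExistLp}.
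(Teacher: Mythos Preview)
Your proposal is correct and follows exactly the approach the paper takes: it simply states that System \eqref{CED4} is symmetric and satisfies Condition (SK) together with \eqref{AkLinear}--\eqref{StructAssum}, and then applies Theorem \ref{ThmExistLp}. Your explicit verification of the structural hypotheses and of (SK), as well as your handling of the damped mode (dropping the quadratic term $v\cdot\nabla v$ via product laws), match precisely the paper's brief justification and the Remark that immediately follows the theorem.
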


\begin{Rmq}
According to  Theorem \ref{ThmExistLp}, the damped mode should be
 $\displaystyle W_\varepsilon\triangleq v\cdot \nabla v+\frac{v}{\varepsilon}+\check{\gamma}c\nabla c.$
 However, the above estimate combined with product laws
 ensure that  $\|v\cdot\nabla v\|_{L^1_t(\dot{\mathbb{B}}^{\frac{d}{p}}_{p,1})}\lesssim c_0^2,$ hence
 is negligible compared to $\varepsilon^{-1}v.$
Consequently,   $W'_\varepsilon\triangleq\dfrac{v}{\varepsilon}+\check{\gamma}c\nabla c$  can be seen as a damped quantity. This latter function turns out to be more adapted to the study of  the relaxation limit.
\end{Rmq}

The uniform estimates from Theorem \ref{ThmExistLpCED} enable us to obtain the following result pertaining to the relaxation limit of the compressible Euler system.

\begin{Thm} \label{Thm-relax}
Let the hypotheses of Theorem \ref{ThmExistLpCED} be in force
and denote by $(c,v)$ the corresponding solution.
Let $\rho$ be the density corresponding to $c$ through relation \eqref{eq:c}. 

Let the positive function $\mathcal{N}_0$
be such that $\mathcal{N}_0-\bar\rho$ 
is small enough in $\dot \B^{\frac{d}{p}}_{p,1},$ and 
let $\mathcal{N}\in\cC_b(\R^+;\dot\B^{\frac dp}_{p,1})\cap 
L^1(\R^+;\dot\B^{\frac dp+2}_{p,1})$ be the  global solution of: 
\begin{equation} \partial_t\mathcal{N}-\Delta P(\mathcal{N})=0 \label{M-eq}
\end{equation}
supplemented with initial data $\mathcal{N}_0$ given by Proposition \ref{PropExistMAppendix}.
\medbreak
Let $(\widetilde{\rho}^\varepsilon,\widetilde{v}^\varepsilon)(t,x)\triangleq (\rho,\varepsilon^{-1}v)(\varepsilon^{-1}t,x)$
and assume that $$\|\widetilde{\rho}_0^\varepsilon-\mathcal{N}_0\|_{\B^{\frac dp-1}_{p,1}}\leq C\varepsilon.$$
Then, as $\varepsilon\to0$, we have $$\widetilde{\rho}^\varepsilon- \mathcal{N} \longrightarrow 0 \quad \text{strongly in} \quad 
L^\infty(\R^+;\dot \B^{\frac{d}{p}-1}_{p,1})\cap L^1(\R^+;\dot \B^{\frac{d}{p}+1}_{p,1}),$$ and 
$$\widetilde{v}^\varepsilon+\frac{\nabla P(\widetilde{\rho}^\varepsilon)}{\widetilde{\rho}^\varepsilon}\longrightarrow 0 \quad \text{strongly in} \quad L^1(\R^+;\dot{\mathbb{B}}^{\frac{d}{p}}_{p,1}).  $$
Moreover, we have the following quantitative estimate:  
$$\|\widetilde{\rho}^\varepsilon-\mathcal{N}\|_{L^\infty(\R^+;\dot \B^{\frac{d}{p}-1}_{p,1})}+\|\widetilde{\rho}^\varepsilon-\mathcal{N}\|_{L^1(\R^+;\dot \B^{\frac{d}{p}+1}_{p,1})}+\norme{\frac{\nabla P(\widetilde{\rho}^\varepsilon)}{\widetilde{\rho}^\varepsilon}+\widetilde v^\varepsilon}_{L^1(\R^+;\dot{\mathbb{B}}^{\frac{d}{p}}_{p,1})}\leq C\varepsilon.$$
\end{Thm}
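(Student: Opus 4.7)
The first step is to translate the a priori estimate of Theorem \ref{ThmExistLpCED} into uniform bounds on the rescaled unknowns $(\widetilde\rho^\varepsilon,\widetilde v^\varepsilon)$. Using the explicit formula \eqref{eq:c} one checks the identity $\nabla P(\rho)/\rho=\check\gamma\, c\,\nabla c$, so the damped mode $W'_\varepsilon=v/\varepsilon+\check\gamma\,c\nabla c$ is carried by \eqref{DiffusiveRescaling} to
$$\widetilde W'_\varepsilon\triangleq\widetilde v^\varepsilon+\frac{\nabla P(\widetilde\rho^\varepsilon)}{\widetilde\rho^\varepsilon}\cdotp$$
Since homogeneous Besov seminorms are invariant under the space part of the rescaling while the time-integration in $L^1(\R^+;\cdot)$ produces an extra factor of $\varepsilon$, the uniform control of $\|W'_\varepsilon\|_{L^1(\dot\B^{d/p}_{p,1})}$ supplied by Theorem \ref{ThmExistLpCED} translates directly into
$$\|\widetilde W'_\varepsilon\|_{L^1(\R^+;\dot\B^{d/p}_{p,1})}\lesssim\varepsilon,$$
which is exactly the announced rate for the Darcy residual. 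The same scaling argument yields uniform bounds on $\widetilde\rho^\varepsilon-\bar\rho$ in $L^\infty(\R^+;\dot\B^{d/p}_{p,1})$ and suitable integrability on $\widetilde v^\varepsilon$.

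Second, injecting the identity $\widetilde\rho^\varepsilon\widetilde v^\varepsilon=-\nabla P(\widetilde\rho^\varepsilon)+\widetilde\rho^\varepsilon\widetilde W'_\varepsilon$ into the mass conservation equation produces the perturbed porous media equation
$$\d_t\widetilde\rho^\varepsilon-\Delta P(\widetilde\rho^\varepsilon)=-\div\bigl(\widetilde\rho^\varepsilon\widetilde W'_\varepsilon\bigr).$$
Setting $\delta^\varepsilon\triangleq\widetilde\rho^\varepsilon-\cN$, subtracting \eqref{M-eq}, and expanding $P(\widetilde\rho^\varepsilon)-P(\cN)=P'(\bar\rho)\delta^\varepsilon+R$ with $R$ gathering all the nonlinear corrections, one recasts the difference as a semilinear heat equation
$$\d_t\delta^\varepsilon-P'(\bar\rho)\Delta\delta^\varepsilon=-\div\bigl(\widetilde\rho^\varepsilon\widetilde W'_\varepsilon\bigr)+\Delta R.$$
By assumption, $\delta^\varepsilon_0$ has size $\varepsilon$ in $\dot\B^{d/p-1}_{p,1}$, and product laws combined with Step 1 bound the source $\div(\widetilde\rho^\varepsilon\widetilde W'_\varepsilon)$ by $C\varepsilon$ in $L^1(\R^+;\dot\B^{d/p-1}_{p,1})$. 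Applying the maximal regularity of the heat semigroup in this critical space yields
$$\|\delta^\varepsilon\|_{L^\infty(\dot\B^{d/p-1}_{p,1})\cap L^1(\dot\B^{d/p+1}_{p,1})}\lesssim\varepsilon+\|\Delta R\|_{L^1(\R^+;\dot\B^{d/p-1}_{p,1})}.$$

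The main obstacle is the treatment of $\Delta R$, because the target space $\dot\B^{d/p-1}_{p,1}$ for $\delta^\varepsilon$ is one derivative below the critical regularity $\dot\B^{d/p}_{p,1}$ in which $\widetilde\rho^\varepsilon$ and $\cN$ naturally live. Writing
$$R=\Bigl(\int_0^1\!\bigl(P'(\cN+\tau\delta^\varepsilon)-P'(\bar\rho)\bigr)\,d\tau\Bigr)\delta^\varepsilon$$
and decomposing the product by paraproduct and remainder, the composition and product estimates recalled in the Appendix, combined with the smallness of $\cN-\bar\rho$ and $\widetilde\rho^\varepsilon-\bar\rho$ in $L^\infty(\dot\B^{d/p}_{p,1})$, lead to an estimate of the form
$$\|\Delta R\|_{L^1(\dot\B^{d/p-1}_{p,1})}\leq C\eta\bigl(\|\delta^\varepsilon\|_{L^1(\dot\B^{d/p+1}_{p,1})}+\|\delta^\varepsilon\|_{L^\infty(\dot\B^{d/p-1}_{p,1})}\bigr),$$
with $\eta$ that can be made arbitrarily small by tightening $c_0$ and $\|\cN_0-\bar\rho\|_{\dot\B^{d/p}_{p,1}}$. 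A standard continuity argument then absorbs this contribution on the left-hand side and delivers the quantitative $\mathcal{O}(\varepsilon)$ bound for $\delta^\varepsilon$; the two strong convergences follow by letting $\varepsilon\to 0$.
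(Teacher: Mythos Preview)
Your proof is correct and follows essentially the same route as the paper: diffusive rescaling to identify the damped mode with the Darcy residual, rewriting the mass equation as a perturbed porous media equation with source $-\div(\widetilde\rho^\varepsilon\widetilde W^\varepsilon)$, and closing via heat maximal regularity on the difference $\delta^\varepsilon$ in $\dot\B^{d/p-1}_{p,1}$. The only cosmetic difference is that the paper writes the nonlinear remainder as $\delta^\varepsilon H_1(\widetilde\rho^\varepsilon)+\bigl(H_1(\widetilde\rho^\varepsilon)-H_1(\cN)\bigr)(\cN-\bar\rho)$ (with $H_1$ the Taylor remainder of $P$ at $\bar\rho$) rather than your integral form $R=\bigl(\int_0^1(P'(\cN+\tau\delta^\varepsilon)-P'(\bar\rho))\,d\tau\bigr)\delta^\varepsilon$; this splitting makes the $L^1_t\times L^\infty_t$ versus $L^2_t\times L^2_t$ time integrability pairing more transparent, but the resulting bound $C\eta\bigl(\|\delta^\varepsilon\|_{L^1(\dot\B^{d/p+1}_{p,1})}+\|\delta^\varepsilon\|_{L^\infty(\dot\B^{d/p-1}_{p,1})}\bigr)$ is the same after interpolation.
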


\begin{Rmq} 
To the best of our knowledge, all the previous works devoted to the relaxation limit
were based on compactness methods, which prevents to get strong convergence results
\emph{globally in space and time and explicit convergence rates} 
(see e.g. \cite{CoulombelLin,WasiolekPeng,XuWang}). Here our functional framework  allows us to bound directly  some   
norms  of the difference of the solutions.
Having  estimates on the damped mode at hand plays a key role.
\end{Rmq}
\begin{Rmq} In the specific case $p=2,$ we expect to have similar results 
for the compressible Euler equations supplemented with 
any  pressure law $P$ 
satisfying  
\begin{equation}\label{Pression2}
P'(\bar\rho)>0\quad\hbox{at some }\ \bar\rho>0.\end{equation}
Indeed, it is then possible to use the symmetrization 
of \cite{CBD2} (instead of the sound speed, 
we consider the unknown $n$ defined from $\rho$ through the relation:
 $ n(\rho)=\int_{\bar{\rho}}^\rho \frac{P'(s)}{s}\,ds$)
 and to use the standard composition lemma to treat general 
nonlinearities that need not be quadratic.  
\end{Rmq}


\section{A few words on the proofs} \label{strategy}

As a first, let us observe that performing 
a suitable time and space rescaling
(see \eqref{eq:rescaling} below) reduces the proof of global 
well-posedness for \eqref{GEQSYM} to the case $\varepsilon=1.$
Then, it is essentially a matter of establishing global-in-time a priori estimates for smooth 
solutions (from them and rather classical arguments, one can obtain the existence of global solutions). 
The uniqueness part of Theorem \ref{ThmExistLp} follows from stability 
estimates. As in \cite{CBD1}, the norms that will be used
in the stability estimates are not the standard ones owing 
to the use of the $L^p$ framework for the low frequencies.
\medbreak
In what follows,  we shortly explain how we proceed to prove 
global a priori estimates, 
adopting, as pointed out in the introduction, 
 a different strategy to handle the low and the high frequencies, 
 then we explain how to investigate the infinite relaxation limit. 

\subsection{Low frequencies: the damped mode}

Although Kawashima's decomposition \eqref{Intro:Decomphfbf} gives the overall behavior of the solution of the linearized system, 
it does not  fully reflect that a part of the 
solution has better decay properties in the low frequencies regime.
The essential ingredient of our low frequencies analysis is
to look at the time evolution of a `damped mode'  corresponding  to the part of the solution that experiences
maximal dissipation in low frequencies. 
In the case $\varepsilon=1$, it may be defined as follows~:
\begin{equation}\label{def:WW}
W\triangleq -L_2^{-1}\d_tZ_2=Z_2+\sum_{k=1}^d L_2^{-1}\bigl(A^k_{2,1}(V)\d_kZ_1+A_{2,2}^k(V)\d_kZ_2\bigr)\cdotp
\end{equation}
Hence $W$ satisfies
\begin{equation}\label{eq:W}\d_tW+L_2W =L_2^{-1}\sum_{k=1}^d\d_t\bigl(A^k_{2,1}(V)\d_{k}Z_1+A_{2,2}^k(V)\d_{k}Z_2\bigr)\cdotp\end{equation}
On the left-hand side,  Property \eqref{partdissip2} ensures maximal dissipation on $W.$
As  the right-hand side of \eqref{eq:W} contains only 
at least quadratic terms, or linear terms with  one derivative, it can be expected to be  negligible in low frequencies. Furthermore, the second equality of \eqref{def:WW} reveals that  $W$ is comparable to $Z_2$ 
in low frequencies if $Z$ is small enough. 
This will allow us to recover better integrability properties on  $Z_2$ than for the whole solution $Z.$ 

In  order to get optimal information, it is more suitable
to look at the evolution equations of $Z_1$ and $W,$
and we thus have to rewrite the equation of $Z_1$ in terms of
$W$ instead of $Z_2.$ 
Let us denote for all $k\in\{1,\cdots,d\}$
and $(p,m)\in\{1,2\}^2,$
$$\bar A^k_{p,m}\triangleq A^k_{p,m}(\bar V)\andf
\wt A^k_{p,m}(Z)\triangleq \wt A^k_{p,m}(\bar V+Z)-
\bar A^k_{p,m}.$$
Then, we rewrite 
the equation of $Z_1$
as follows: 
\begin{equation}\label{Z1withW}\partial_tZ_1 + \sum_{k=1}^d\bar A_{1,1}^k\partial_{k}Z_1-\sum_{k=1}^d\sum_{\ell=1}^d \bar A_{1,2}^kL_2^{-1}\bar A^{\ell}_{2,1}\d_k\d_\ell Z_1=f_1+f_2+f_3+f_4+f_5,
\end{equation}
where   
$$ 
 \begin{aligned} f_1&=\sum_{k=1}^d\sum_{\ell=1}^dA_{1,2}^k(V)\d_k\bigl(L_2^{-1}A_{2,2}^\ell(V)\d_\ell Z_2\bigr),\\
  f_2&=-\sum_{k=1}^dA_{1,2}^k(V)\partial_{k}W,\\ f_3&=\sum_{k=1}^d\sum_{\ell=1}^d A_{1,2}^k(V)\partial_k(L_2^{-1}\wt A^{\ell}_{2,1}(Z))\d_\ell Z_1),\\ 
 f_4&=\sum_{k=1}^d\sum_{\ell=1}^d \wt A_{1,2}^k(Z))L_2^{-1}\bar A^{\ell}_{2,1}\d_k\d_\ell Z_1,\\
  f_5&=-\sum_{k=1}^d \wt A_{1,1}^k(Z)\d_kZ_1.
\end{aligned}$$
As in \cite{WasiolekPeng}, in order to ensure the parabolic behavior of the unknown $Z_1$ in  low frequencies,  we 
make the following hypothesis: 
  \begin{equation}\label{eq:ellipticity}
 \forall k\in\{1,\cdots,d\},\;  \bar A_{1,1}^k=0\andf 
    \cA\triangleq -\sum_{k=1}^d\sum_{\ell=1}^d \bar A_{1,2}^kL_2^{-1}\bar A^{\ell}_{2,1}\d_k\d_\ell \text{ is strongly elliptic.}
\end{equation}
It turns out that if the first condition of \eqref{eq:ellipticity} is satisfied then 
the strong ellipticity of  $\cA$ is equivalent to Condition (SK)
(see the proof in Appendix). 
\begin{Rmq}
In the context of fluid mechanics, the first condition of \eqref{eq:ellipticity} 
is satisfied up to a Galilean change of frame.  Indeed,
$Z_1$ then corresponds to conserved quantities like e.g. the density or the entropy, and
$\sum_{k=1}^dA_{1,1}^k(V)\partial_kZ_1$ represents the corresponding transport term by the velocity field. 

Further remark that even if the matrices $\bar A_{1,1}^k$ are nonzero, the above sum  has no contribution in the energy arguments, and can be 
treated in a purely $L^2$ framework like in  \cite{CBD2}. 

Finally, in the general $L^p$ framework, our results still hold
if all the matrices $\bar A^k_{1,1}$ are diagonal. 
\end{Rmq}

Now, as \eqref{eq:ellipticity} is satisfied, one can take advantage of
parabolic maximal regularity estimates (see Lemma \ref{l:parabolic}) 
to bound $Z_1$ in Besov spaces of type $\dot\B^s_{p,1}.$
 Then, pointing out that the equations of $W$ and $Z_1$ are coupled by at least quadratic terms, or by linear terms of higher order (in terms of derivatives) 
that are thus negligible provided that the threshold $J_1$ between 
low and high frequencies is negative enough, 
one can establish the a priori estimates in the low frequencies region.

\subsection{High frequencies: Lyapunov functional}

We proceed as in \cite{CBD2}, but our nonstandard functional framework
regarding the low frequencies 
will complicate the treatment of some nonlinear terms. 

Recall that, in  Fourier variables, System \eqref{eq:Zlinear}  reads
  \begin{equation}\label{eq:Zomega2}
  \d_t\wh Z +i\rho M_\omega \wh Z +L\wh Z = 0\with 
  M_\omega\triangleq \sum_{k=1}^d \omega_k\bar A^k. 
  \end{equation}
To compensate the lack of coercivity depicted in the previous section, we  introduce a "correction" term to exhibit the time integrability properties for the un-damped components.
  Fix $n-1$ positive parameters $\e_1,\cdots \e_{n-1}$ (bound to be small) and set
  \begin{equation}\label{def:I}
  \cI\triangleq \Im \sum_{k=1}^{n-1} \e_k \bigl(LM_\omega^{k-1}\wh Z\cdot L M_\omega^k\wh Z\bigr)
  \end{equation}
  where $\cdot$ designates  the Hermitian scalar product in $\C^n.$ 
  \smallbreak
  In \cite{BZ}, the authors proved the following result

  \begin{Lemme}
  There exist positive parameters $\e_1,\cdots, \e_{n-1}$  so that 
  \begin{align}\label{eq:ZZ1}
\frac d{dt}\cL +\cH \leq 0\with \cH&\triangleq\int_{\R^d} \sum_{k=0}^{n-1} \e_k \min(1,|\xi|^2) |LM_\omega^k\wh Z(\xi)|^2\,d\xi \\\andf
\cL&\triangleq \|Z\|_{L^2}^2 + \int_{\R^d} \min(|\xi|,|\xi|^{-1})\cI(\xi)\,d\xi,\nonumber\end{align}
with, in addition, $\cL\simeq \|Z\|_{L^2}^2.$
  \end{Lemme}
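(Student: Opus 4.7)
The starting point is the Fourier-side equation $\d_t\wh Z=-i\rho M_\omega\wh Z-L\wh Z$. Taking the Hermitian scalar product with $\wh Z$ and extracting the real part (the $M_\omega$-term vanishes since $M_\omega$ is symmetric) yields
$$\tfrac12\tfrac{d}{dt}|\wh Z|^2+\operatorname{Re}(L\wh Z\cdot\wh Z)=0.$$
The hypothesis $L_2+{}^TL_2\geq 2c\,\Id$ on $\R^{n_2}$ gives $\operatorname{Re}(L\wh Z\cdot\wh Z)\geq c|\wh Z_2|^2\gtrsim |L\wh Z|^2$, which after integration in $\xi$ provides the first building block
$$\frac{d}{dt}\|Z\|_{L^2}^2+2c_0\int_{\R^d}|L\wh Z|^2\,d\xi\leq 0.$$
This only dissipates the directly damped component, so the corrections in $\cI$ are designed to extract dissipation on the remaining $LM_\omega^k\wh Z$, $k\geq 1$.

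Next, using $\d_t(LM_\omega^j\wh Z)=-i\rho\,LM_\omega^{j+1}\wh Z-LM_\omega^j L\wh Z$, one computes for each $k\in\{1,\dots,n-1\}$
$$\frac{d}{dt}\operatorname{Im}\bigl(LM_\omega^{k-1}\wh Z\cdot LM_\omega^k\wh Z\bigr)
=-\rho\,|LM_\omega^k\wh Z|^2+\rho\,\operatorname{Re}\bigl(LM_\omega^{k-1}\wh Z\cdot LM_\omega^{k+1}\wh Z\bigr)+R_k,$$
where the first two terms come from the skew-symmetric part $-i\rho M_\omega$ and $R_k$ gathers all the contributions produced by the dissipative part $-L\wh Z$. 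By Cauchy--Schwarz each term of $R_k$ is controlled by $C\,|L\wh Z|\,|LM_\omega^j\wh Z|$ for some $0\leq j\leq k$, and can thus be absorbed by a small fraction of the coercive $|L\wh Z|^2$ term (coming from the basic energy estimate) together with a small multiple of $|LM_\omega^j\wh Z|^2$.

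The Lyapunov functional is then built by multiplying the $k$-th identity by $\e_k\min(|\xi|,|\xi|^{-1})$, summing from $k=1$ to $n-1$, and adding the basic energy. The weight $\min(|\xi|,|\xi|^{-1})$ combined with the $\rho$ arising from $\d_t\wh Z$ produces precisely the factor $\min(1,\rho^2)=\min(1,|\xi|^2)$ advertised in $\cH$, uniformly in $\rho\in(0,\infty)$ (for $\rho\leq 1$ one gets $\rho\cdot\rho=\rho^2$, for $\rho\geq1$ one gets $\rho\cdot\rho^{-1}=1$). The residual cross terms $\rho\operatorname{Re}(LM_\omega^{k-1}\wh Z\cdot LM_\omega^{k+1}\wh Z)$ produced at level $k$ are tamed by Young's inequality: after being weighted by $\min(|\xi|,|\xi|^{-1})$ they give at most a small multiple of $\min(1,\rho^2)|LM_\omega^{k+1}\wh Z|^2$, which is absorbed into the negative $-\e_{k+1}\min(1,\rho^2)|LM_\omega^{k+1}\wh Z|^2$ coming from level $k+1$, provided the $\e_k$ form a sufficiently fast decreasing geometric sequence (one may take $\e_{k+1}\leq \e_k/4$) and all $\e_k$ are small enough for the $R_k$ remainders to be absorbed in the original $|L\wh Z|^2$ term.

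Finally, the equivalence $\cL\simeq\|Z\|_{L^2}^2$ is immediate: since $L,M_\omega$ are bounded uniformly in $\omega\in\S^{d-1}$, one has $|\cI(\xi)|\leq C(\max_k\e_k)\,|\wh Z(\xi)|^2$, and the weight $\min(|\xi|,|\xi|^{-1})\leq 1$, so choosing the $\e_k$ small enough makes the corrective integral a genuine perturbation of $\|Z\|_{L^2}^2$. The main difficulty is the simultaneous choice of the cascade parameters $\e_1,\dots,\e_{n-1}$: they must decrease fast enough for the level-$k$ cross term to be dominated by the level-$(k+1)$ dissipation, yet be collectively small enough so that the $L\wh Z$--type remainders $R_k$ are absorbed by the single coercive term produced by the basic energy identity, and all of this has to be carried out uniformly in $\omega\in\S^{d-1}$ and in the Fourier variable $\rho=|\xi|$.
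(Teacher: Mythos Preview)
The paper does not give its own proof of this lemma; it attributes the result to Beauchard--Zuazua \cite{BZ}. Your sketch is exactly the hypocoercivity construction carried out there: basic $L^2$ energy identity supplying the seed dissipation $|L\wh Z|^2$, a cascade of correctors $\Im\bigl(LM_\omega^{k-1}\wh Z\cdot LM_\omega^k\wh Z\bigr)$ weighted by $\min(|\xi|,|\xi|^{-1})$ to recover $|LM_\omega^k\wh Z|^2$ for $k\geq1$, Young's inequality to tame the error terms, and smallness of the $\e_k$ to ensure $\cL\simeq\|Z\|_{L^2}^2$. The computation of $\tfrac{d}{dt}\Im(LM_\omega^{k-1}\wh Z\cdot LM_\omega^k\wh Z)$ and the bound on the remainders $R_k$ are correct.

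Two points are not quite right, though both are standard to fix. First, your claimed cascade condition ``$\e_{k+1}\leq\e_k/4$'' is inconsistent with your absorption scheme: splitting the cross term $\e_k\,\Re\bigl(LM_\omega^{k-1}\wh Z\cdot LM_\omega^{k+1}\wh Z\bigr)$ by Young and absorbing the two halves into the good terms at levels $k-1$ and $k+1$ forces $\e_k^2\lesssim\e_{k-1}\e_{k+1}$ with a strict constant, which no geometric sequence satisfies. The actual choice in \cite{BZ} is made inductively, fixing each $\e_k$ small relative to $\e_{k-1}$ in a way that depends on the matrix norms; see also the localized version of the same computation later in the paper at \eqref{eq:Iqbis}. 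Second, at the top level $k=n-1$ your argument appeals to a nonexistent level-$n$ good term. One closes here via Cayley--Hamilton: $M_\omega^n$ is a polynomial in $M_\omega$ of degree $\leq n-1$ with coefficients bounded uniformly in $\omega\in\S^{d-1}$, so $|LM_\omega^n\wh Z|\leq C\sum_{j=0}^{n-1}|LM_\omega^j\wh Z|$, and the resulting error is absorbed by taking $\e_{n-1}$ small enough relative to all previous $\e_j$.
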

  The question now is whether $\cH$ 
may be compared to   $\|Z\|_{L^2}^2.$ The answer depends on the properties of 
the support of $\wh Z_0$  and on the  possible cancellation of the following quantity:
\begin{equation}\label{eq:Nomega}
\mathcal{N}_{\bar{V}}:=\inf\biggl\{\sum_{k=0}^{n-1}\varepsilon_k|LM_\omega^kx|^2;\;x\in\S^{n-1},\,\omega\in\S^{d-1}\biggr\}\cdotp\end{equation}
In order to pursue our analysis, we  need the following key result (see the proof in e.g. \cite{BZ}). 
\begin{Prop}
Let $M$ and $L$ be two matrices in  $\cM_n(\R).$
The following assertions are equivalent:
\begin{enumerate}
\item $L\phi=0$ and $\lambda \phi+ M\phi=0$ for some $\lambda\in\R$
implies $\phi=0$;
\item For every $\e_0,\dotsm,\e_{n-1}>0$, the function 
$$y\longmapsto\sqrt{\sum_{k=0}^{n-1}\e_k|LM^ky|^2}$$
defines a norm on $\mathbb{\R}^n.$
\end{enumerate}
\end{Prop}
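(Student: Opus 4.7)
\bigbreak\noindent\textbf{Proof plan.} The plan is to recognize assertion (2) as the classical Kalman observability condition for the pair $(M,L)$ and to derive its equivalence with the spectral criterion (1) via a Hautus-type argument. First I would observe that
\[
N(y)\triangleq\sqrt{\sum_{k=0}^{n-1}\varepsilon_k|LM^ky|^2}
\]
is \emph{always} a seminorm on $\R^n$: indeed it equals the Euclidean norm on $(\R^n)^n$ of the stacked vector $\bigl(\sqrt{\varepsilon_k}\,LM^ky\bigr)_{0\leq k\leq n-1}$, so homogeneity, positivity and the triangle inequality are automatic. Consequently, $N$ is a norm if and only if its kernel is trivial, and, since every $\varepsilon_k$ is positive, this amounts to the Kalman-type condition
\[
K\triangleq \bigcap_{k=0}^{n-1}\ker(LM^k)=\{0\}.
\]
Thus the proposition reduces to proving the equivalence $(1)\Leftrightarrow K=\{0\}$.

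The direction $(2)\Rightarrow(1)$ is immediate: if $L\phi=0$ and $M\phi=-\lambda\phi$, a straightforward induction yields $LM^k\phi=(-\lambda)^kL\phi=0$ for every $k\geq 0$, so $\phi\in K=\{0\}$.

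For the converse I would argue by contraposition, supposing $K\neq\{0\}$. The key preliminary step is the $M$-invariance of $K$: given $\phi\in K$, the Cayley-Hamilton theorem writes $M^n$ as a real polynomial in $M$ of degree strictly less than $n$, hence $LM^n\phi=0$; iterating yields $LM^k\phi=0$ for \emph{every} $k\geq 0$, and in particular $LM^k(M\phi)=LM^{k+1}\phi=0$ for $k=0,\dots,n-1$, so $M\phi\in K$. The main (and really only) obstacle is then to produce a \emph{real} eigenvector of the restriction $M|_K$, since a priori $M|_K$ could act with only non-real eigenvalues on the real subspace $K$. This is precisely where the structural features of the application enter: here $M=M_\omega=\sum_k\omega_k\bar A^k$ is symmetric, so the orthogonal complement $K^\perp$ is $M$-invariant as well, and $M|_K$ is self-adjoint with respect to the inherited inner product; it therefore admits a real eigenvector $\phi\in K\setminus\{0\}$, which produces $L\phi=0$ and $M\phi=-\lambda\phi$ with $\lambda\in\R$, in contradiction with (1). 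This closes the equivalence.
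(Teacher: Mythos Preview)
The paper does not supply its own proof of this proposition; it simply refers the reader to Beauchard and Zuazua \cite{BZ}. Your approach---recasting assertion (2) as the Kalman condition $\bigcap_{k=0}^{n-1}\ker(LM^k)=\{0\}$ and linking it to (1) via a Hautus-type argument with Cayley--Hamilton---is exactly the standard route and is what one finds in \cite{BZ}.

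Your observation about the role of the symmetry of $M$ is sharp and worth highlighting. As literally stated, with $M$ an arbitrary element of $\cM_n(\R)$, the equivalence fails: take $n=2$, $L=0$ and $M=\bigl(\begin{smallmatrix}0&-1\\1&0\end{smallmatrix}\bigr)$. Then $K=\R^2\neq\{0\}$ so (2) fails, yet $M$ has no real eigenvalue, so the hypothesis ``$L\phi=0$ and $\lambda\phi+M\phi=0$ for some $\lambda\in\R$'' is never met by a nonzero $\phi$, and (1) holds vacuously. In the paper's setting one always has $M=M_\omega=\sum_k\omega_k\bar A^k$, which is symmetric; your argument (self-adjointness of $M|_K$ produces a real eigenvector in $K$) then closes the implication $(1)\Rightarrow(2)$ cleanly. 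So your proof is correct for the intended application, and you have in fact identified that the proposition as printed is missing the hypothesis that $M$ be symmetric (or, more generally, have only real eigenvalues).
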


Thanks to the above proposition and observing that the unit sphere $\mathbb{S}^{d-1}$ is compact, one may   conclude that 
the (SK) condition  is satisfied by the pair $(M_\omega,L)$ for all $\omega\in\S^{d-1}$ if 
and only if $\cN_{\bar{V}}>0.$ Furthermore, we note that:
\begin{itemize}
\item if $\wh Z_0$ is compactly supported then $\cH\gtrsim \|\nabla Z\|_{L^2}^2$ (parabolic behavior of the low frequencies of the solution);
\item if the support of $\wh Z_0$ is away from the origin, then   $\cH\gtrsim \|Z\|_{L^2}^2$ (exponential decay for the high frequencies). 
\end{itemize}
We thus readily recover  Kawashima's decomposition \eqref{Intro:Decomphfbf}. 
\smallbreak 
As in \cite{CBD2}, an important part of the proof consists in  studying the evolution of the functional~$\cL.$ One cannot repeat exactly the  computations therein however, since the nonlinearities contain a little amount of low frequencies which are not bounded in $L^2$-based spaces.
Taking advantage of  appropriate product laws 
and commutator estimates  (see the Appendix) will enable us
to overcome the difficulty. 

\subsection{The relaxation limit} 

We finally give some  insight on our study of the limit $\varepsilon\to0$ 
for the isentropic compressible Euler equations with relaxation.  
At first sight, when the damping parameter $1/\varepsilon$ increases, we expect the dissipation  to  dominate more and more. 
This is not quite the case however, owing to the so-called \textit{overdamping} 
phenomenon: somehow, the `overall' damping rate behaves like $\min (\varepsilon,1/\varepsilon)$. This can be seen from a spectral analysis of the Euler system for example (see also \cite{SlideZuazua} for the case of the damped harmonic oscillator). 
Looking at the one-dimensional case for simplicity, the linearized Euler equations 
with relaxation parameter $\varepsilon$ has the following matrix in the Fourier side:
$$\begin{pmatrix}0 & i\xi \\ i\xi & {\varepsilon}^{-1} \end{pmatrix}\cdotp$$ 
It is straightforward that: 
\begin{itemize}
   \item for  frequencies $\xi$ such that $|\xi|\leq1/(2\varepsilon),$ 
   the above  matrix has two real eigenvalues asymptotically equal to ${1}/{\varepsilon}$ and $\varepsilon\xi^2,$ respectively  as $\xi$ tends to $0$;
\item for  frequencies $\xi$ such that $|\xi|\geq 1/(2\varepsilon),$ it has two complex conjugated eigenvalues with real part equal to  ${1}/{2\varepsilon}.$
\end{itemize}
It seems that in  the prior literature dedicated to the relaxation
limit, 
 the dissipative aspect of the low frequencies has been overlooked.
In fact, only  the overall behavior of the solution was taken 
into account and the threshold between the low and high frequencies was not taken  depending on $\varepsilon.$
 
In the approach that we offer here, we take advantage of  the uniform bounds on the damped mode which 
corresponds to the  eigenvalue asymptotically equal to $1/(2\varepsilon)$ 
for $\xi\to0,$ and allow  the threshold between low and high frequencies 
to depend on $\varepsilon$  by putting it at
 the place where the $0$-order terms and the $1$-order terms have the same weight (parameter included).
 This corresponds  approximately to $1/\varepsilon,$ in accordance with our spectral analysis above. 
 Somehow, as $\varepsilon\to0$, the low frequencies `invade' the whole space of frequencies.
 Hence the fact that  the limit system  for one of the components of the system 
 (namely the density)  has to be  purely parabolic does not come up as a surprise. 


With this heuristics in hand and
after performing  the diffusive rescaling \eqref{DiffusiveRescaling} and using the uniform bounds on the damped mode and on the solution from the uniform existence theorem, 
it is rather easy to  estimate  the difference between the solutions of the compressible Euler System and the porous media equation.
In this way, we get Theorem \ref{Thm-relax}.


\section{Global existence in the  \texorpdfstring{$L^p$}{TEXT} framework} \label{s:GWP} 

This section is devoted to proving  Theorem \ref{ThmExistLp}.  
We shall focus on the case $\varepsilon=1,$ which is not restrictive owing to the rescaling:
\begin{equation}\label{eq:rescaling} Z(t,x)\triangleq\check Z\Bigl(\frac{t}{\varepsilon},\frac{x}{\varepsilon}\Bigr)\cdotp\end{equation}
Indeed, $Z$ satisfies \eqref{GEQSYM} with relaxation parameter $\varepsilon$
if and only if $\check Z$ satisfies \eqref{GEQSYM} with relaxation parameter $1,$ and performing  the inverse scaling
gives us the desired dependency with respect to $\varepsilon$ in Theorem \ref{ThmExistLp}
since (see e.g. \cite[Chap.2]{HJR})  for all $s\in\R$ and $m\in[1,\infty],$ we have
\begin{equation}\label{eq:rescale}
\|z(\varepsilon\cdot)\|_{\dot\B^s_{m,1}}\simeq \varepsilon^{s-d/m}\|z\|_{\dot\B^{s}_{m,1}}
\end{equation}
and, by the same token, we have
\begin{equation}\label{eq:rescale2}
\|z(\varepsilon\cdot)\|^{\ell,J_1}_{\dot\B^s_{p,1}} \simeq \varepsilon^{s-d/p}\|z\|^{\ell,J_\varepsilon}_{\dot\B^s_{p,1}}\andf 
\|z(\varepsilon\cdot)\|^{h,J_1}_{\dot\B^s_{2,1}} \simeq \varepsilon^{s-d/2}\|z\|^{h,J_\varepsilon}_{\dot\B^s_{2,1}}.
\end{equation}
We shall use repeatedly that for $s\leq s',$ the following inequalities hold true:
\begin{equation}\label{eq:comparaison}
\|z^{\ell,J_1}\|_{\dot\B^{s'}_{2,1}}\lesssim \|z\|^{\ell,J_1}_{\dot\B^{s'}_{2,1}}\lesssim 2^{J_1(s'-s)}\|z\|^{\ell,J_1}_{\dot\B^{s}_{2,1}}\andf
\|z^{h,J_1}\|_{\dot\B^{s}_{2,1}}\lesssim\|z\|^{h,J_1}_{\dot\B^{s}_{2,1}}\lesssim 2^{J_1(s-s')}\|z\|^{h,J_1}_{\dot\B^{s'}_{2,1}}.\end{equation}

\subsection{A priori estimates}

Throughout this part  we set the threshold between low and high frequencies at some integer $J_1$ the value of which will be chosen during the computations. 
For better readability,  the exponent $J_1$ on the Besov norms will be omitted. 

We assume that we are given a smooth (and decaying) solution $Z$ 
of \eqref{GE} on $[0,T]\times\R^d$ with $Z_0$ as initial data, satisfying
\begin{equation}\label{eq:smallZ}
\sup_{t\in[0,T]} \|Z(t)\|^\ell_{\dot\B^{\frac dp}_{p,1}}+\sup_{t\in[0,T]} \|Z(t)\|^h_{\dot\B^{\frac d2+1}_{2,1}}\ll1.
\end{equation} 
We shall use repeatedly that, owing to 
\eqref{eq:comparaison} and to the embedding $\dot\B^{\frac dp}_{p,1}\hookrightarrow L^\infty$, we have
\begin{equation}\label{eq:smallZbis}
\sup_{t\in[0,T]} \|Z(t)\|_{L^\infty}\ll1
\end{equation} 
and also that for all $\sigma\in\R,$ $q\in[1,\infty]$ and $\alpha\geq0,$
\begin{equation}\label{eq:bernstein}
\|z\|_{\dot\B^{\sigma+\alpha}_{q,1}}^\ell
\lesssim \|z\|_{\dot\B^{\sigma}_{q,1}}^\ell\andf
\|z\|_{\dot\B^{\sigma-\alpha}_{q,1}}^h
\lesssim \|z\|_{\dot\B^{\sigma}_{q,1}}^h.
    \end{equation}

From now on, $C>0$ designates a generic harmless constant, the 
value of which depends on the context and we denote by $(c_j)_{j\in\Z}$
various nonnegative sequences such that  $\sum_{j\in\Z} c_j=1.$

\subsubsection{Low frequencies analysis in \texorpdfstring{$L^p$}{TEXT} spaces}
In this part, we fix the threshold $J_1$ between the low and high 
frequencies to be a negative enough integer.
We aim at proving:  
\begin{Prop} \label{APLP}
Assume that $2\leq p\leq4$ if $d\leq4,$ or $d\in[2,2d/(d-2)]$ if $d\geq5.$
There exists a positive real number $\kappa_0$ such that for all $t\in[0,T],$
\begin{multline}\label{eq:APLP}
\|(Z,W)(t)\|^\ell_{\dot\B^{\frac dp}_{p,1}}
  + \kappa_0\int_0^t\biggl(\|Z_1\|^\ell_{\dot\B^{\frac dp+2}_{p,1}}+\|Z_2\|^\ell_{\dot\B^{\frac dp+1}_{p,1}}+
\|W\|_{\dot{\mathbb{B}}^{\frac{d}{p}}_{p,1}}^\ell\biggr)\\
 \lesssim  \|(Z_{0},W_0)\|^\ell_{\dot\B^{\frac dp}_{p,1}}+\int_0^t\|Z\|_{\dot\B^{\frac dp}_{p,1}}\| Z_2\|_{\dot\B^{\frac dp+1}_{p,1}}+\int_0^t\|Z_2\|_{\dot\B^{\frac dp}_{p,1}}\|Z\|_{\dot\B^{\frac dp+1}_{p,1}}\\+\int_0^t\|Z\|_{\dot\B^{\frac dp+1}_{p,1}}^2+\int_0^t\|Z\|_{\dot\B^{\frac dp}_{p,1}}\|Z\|^h_{\dot\B^{\frac d2+1}_{2,1}}+\int_0^t\|W\|_{\dot\B^{\frac dp}_{p,1}}\|Z\|_{\dot\B^{\frac dp}_{p,1}\cap\dot\B^{\frac dp+1}_{p,1}}.\end{multline}
\end{Prop}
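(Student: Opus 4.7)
The plan is to split the analysis into the parabolic equation \eqref{Z1withW} for $Z_1$ and the damped equation for $W$ derived from \eqref{def:WW}, and then to recover $Z_2$ from $W$ via the algebraic identity $W-Z_2=L_2^{-1}\sum_k(A^k_{2,1}(V)\partial_kZ_1+A^k_{2,2}(V)\partial_kZ_2).$ Throughout, the main workhorse is the Bernstein inequality \eqref{eq:bernstein} in low frequencies, which yields $\|\cdot\|^\ell_{\dot\B^{\frac dp+\alpha}_{p,1}}\lesssim 2^{\alpha J_1}\|\cdot\|^\ell_{\dot\B^{\frac dp}_{p,1}}$ for $\alpha>0,$ so that terms carrying extra spatial derivatives can be absorbed into the left-hand side by choosing $J_1$ sufficiently negative.

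First, I localize \eqref{Z1withW} by applying $\ddj$ and invoke parabolic maximal regularity in $L^p$ (Lemma \ref{l:parabolic}), which is legitimate because the constant-coefficient operator $\cA$ is strongly elliptic thanks to \eqref{eq:ellipticity}. Multiplying the resulting blockwise estimate by $2^{jd/p}$ and summing over $j\le J_1+1$ controls $\|Z_1\|^\ell_{L^\infty_t(\dot\B^{\frac dp}_{p,1})}+\|Z_1\|^\ell_{L^1_t(\dot\B^{\frac dp+2}_{p,1})}$ by the initial data plus the $L^1_t(\dot\B^{\frac dp}_{p,1})$-norms of $f_1,\dots,f_5.$ The source $f_2=-\sum_kA^k_{1,2}(V)\partial_kW$ produces the linear piece $\|W\|^\ell_{L^1_t(\dot\B^{\frac dp}_{p,1})}$ that appears on the right of \eqref{eq:APLP}, plus a nonlinear correction $\|(A_{1,2}(V)-\bar A_{1,2})\partial W\|^\ell\lesssim \|W\|_{\dot\B^{\frac dp}_{p,1}}\|Z\|_{\dot\B^{\frac dp}_{p,1}\cap\dot\B^{\frac dp+1}_{p,1}};$ the other $f_i$ are either genuinely quadratic in $(Z,\nabla Z,\nabla^2Z),$ or involve the composition $\tilde A(Z),$ and are handled by the Besov product and composition lemmas (Appendix), valid precisely in the prescribed range of $p.$

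For the damped mode, differentiating \eqref{def:WW} in time and substituting $\partial_tZ_1,\partial_tZ_2$ through \eqref{GE2}, the equation becomes $\partial_tW+L_2W=G,$ where $G$ splits into (i) a purely quadratic part in $(Z,\nabla Z)$ (thanks to the linearity of $A^k$ in $Z$), and (ii) a residual linear second-order piece of the form $\bar A^k_{2,1}L_2^{-1}\bar A^m_{1,2}\partial_k\partial_mZ_2$ together with symmetric variants. The blockwise damped ODE estimate
$$\|\ddj W\|_{L^\infty_t(L^p)}+c\|\ddj W\|_{L^1_t(L^p)}\lesssim \|\ddj W_0\|_{L^p}+\|\ddj G\|_{L^1_t(L^p)}$$
then delivers, after summation, bounds on $\|W\|^\ell_{L^\infty_t(\dot\B^{\frac dp}_{p,1})}$ and $\|W\|^\ell_{L^1_t(\dot\B^{\frac dp}_{p,1})}.$ The linear part of $G$ is dominated by $2^{2J_1}\|Z_2\|^\ell_{L^1_t(\dot\B^{\frac dp}_{p,1})},$ hence absorbable for $J_1$ small enough, while the quadratic part matches the integrals already listed on the right of \eqref{eq:APLP}. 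Finally, exploiting $Z_2=W-L_2^{-1}\sum_k(\cdots)$ together with Bernstein, the bound $\|Z_2\|^\ell_{L^1_t(\dot\B^{\frac dp+1}_{p,1})}\lesssim 2^{J_1}\|W\|^\ell_{L^1_t(\dot\B^{\frac dp}_{p,1})}+\|Z_1\|^\ell_{L^1_t(\dot\B^{\frac dp+2}_{p,1})}+2^{J_1}\|Z_2\|^\ell_{L^1_t(\dot\B^{\frac dp+1}_{p,1})}+(\text{quadratic})$ is obtained, the last linear term being absorbed by smallness of $2^{J_1},$ and analogously for the $L^\infty_t$-norm of $Z_2.$

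The main obstacle I anticipate is the systematic expansion and estimation of $G$: substituting $\partial_tV$ via the system generates numerous cross-terms (products of first-order derivatives, composite matrix functions of $Z,$ etc.), and each must be matched either to one of the integrals on the right of \eqref{eq:APLP} or be endowed with enough derivatives to furnish a power of $2^{J_1}$ for absorption. The precise list of integrals in the statement reflects exactly what the commutator–composition toolbox produces in the $L^p$ framework, where the $L^2$ duality shortcuts of \cite{CBD2} are unavailable; the structural hypothesis \eqref{StructAssum}, which forces $A^k_{1,1}(V)\partial_kZ_1$ to be at least quadratic, is what prevents an otherwise irreducible linear loss of derivatives in this bookkeeping.
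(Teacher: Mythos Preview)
Your three–step architecture (parabolic estimate for $Z_1$ via \eqref{Z1withW}, damped $L^p$ estimate for $W$, algebraic recovery of $Z_2$) is exactly the paper's, and your identification of the absorption mechanism through the Bernstein factor $2^{J_1}$ and of the role of \eqref{StructAssum} is correct.

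Two small corrections relative to the paper's execution. First, the linear piece coming from $f_2$ is $\|W\|^\ell_{\dot\B^{\frac dp+1}_{p,1}}$, not $\|W\|^\ell_{\dot\B^{\frac dp}_{p,1}}$; it does \emph{not} appear on the right of \eqref{eq:APLP} but is absorbed (after Bernstein) into $\kappa_0\int_0^t\|W\|^\ell_{\dot\B^{\frac dp}_{p,1}}$ on the left. Second, rather than fully expanding $G$ by substituting the equations of $Z_1,Z_2$, the paper writes the source of the $W$–equation as $L_2^{-1}\sum_k\partial_t\bigl(A^k_{2,1}(V)\partial_kZ_1+A^k_{2,2}(V)\partial_kZ_2\bigr)$ and then invokes a short lemma (Lemma \ref{dtZLp}) bounding $\|\partial_tZ_1\|_{\dot\B^\sigma_{p,1}}\lesssim\|\nabla Z_2\|_{\dot\B^\sigma_{p,1}}+\|Z_2\|_{\dot\B^{\frac dp}_{p,1}}\|\nabla Z_1\|_{\dot\B^\sigma_{p,1}}$ and $\|\partial_tZ_2\|_{\dot\B^\sigma_{p,1}}\lesssim\|W\|_{\dot\B^\sigma_{p,1}}$; this avoids the bookkeeping you anticipate in your last paragraph and yields the linear residual $\|(\nabla Z_2,W)\|^\ell_{\dot\B^{\frac dp+1}_{p,1}}$ directly. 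Finally, be aware that for $f_1,f_3,f_4$ a naive product law is insufficient (it would produce $\|Z\|_{\dot\B^{\frac dp+2}_{p,1}}$ in full frequency); the paper explicitly writes $\ddj(\wt A(Z)\nabla^2 Z)=\wt A(Z)\ddj\nabla^2 Z+[\wt A(Z),\ddj]\nabla^2 Z$ and applies \eqref{eq:com1} to the commutator, which is what confines the second–order factor to low frequencies so it can be absorbed via the smallness \eqref{eq:smallZ}.
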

First, we state a result that will be used on several occasions.
\begin{Lemme} \label{dtZLp}
Under hypotheses \eqref{StructAssum} and \eqref{eq:smallZ}, we have  
 for all   $\sigma\in]d/p-d/p^*, d/p]$,
$$\begin{aligned}
\norme{\d_tZ_1}_{\dot{\mathbb{B}}^{\sigma}_{p,1}}&\lesssim 
\norme{\nabla Z_2}_{\dot{\mathbb{B}}^{\sigma}_{p,1}}+ \|Z_2\|_{\dot\B^{\frac dp}_{p,1}}
\|\nabla Z_1\|_{\dot\B^{\sigma}_{p,1}},\\
\norme{\d_tZ_2}_{\dot{\mathbb{B}}^{\sigma}_{p,1}}&\lesssim 
\norme{W}_{\dot{\mathbb{B}}^{\sigma}_{p,1}}.\end{aligned}$$
\end{Lemme}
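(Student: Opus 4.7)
The strategy is to treat the two estimates separately, exploiting directly the evolution equations of \eqref{GE2} together with the structural hypotheses.

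For the $\d_tZ_2$ inequality, the key observation is purely algebraic. Applying $L_2$ to both sides of the definition \eqref{def:WW} gives
$$L_2W=L_2Z_2+\sum_{k=1}^d\bigl(A^k_{2,1}(V)\d_kZ_1+A^k_{2,2}(V)\d_kZ_2\bigr),$$
and substituting the second line of \eqref{GE2} into the right-hand side shows that the $L_2Z_2$ contributions cancel and we are left with the pointwise identity $\d_tZ_2=-L_2W$. Since $L_2$ is a fixed matrix on $\R^{n_2}$, the desired bound $\|\d_tZ_2\|_{\dot{\B}^\sigma_{p,1}}\lesssim\|W\|_{\dot{\B}^\sigma_{p,1}}$ then follows for every $\sigma\in\R$.

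For $\d_tZ_1$, I start from the first line of \eqref{GE2}, namely
$$\d_tZ_1=-\sum_{k=1}^d A^k_{1,1}(V)\d_kZ_1-\sum_{k=1}^d A^k_{1,2}(V)\d_kZ_2,$$
and decompose the coefficients using the structural hypotheses. Assumption \eqref{StructAssum} ensures that $A^k_{1,1}(\bar V)=0$ and that $Z\mapsto A^k_{1,1}(\bar V+Z)$ is linear and depends only on $Z_2$, so $A^k_{1,1}(V)$ is pointwise linear in $Z_2$ alone; assumption \eqref{AkLinear} allows the splitting $A^k_{1,2}(V)=\bar A^k_{1,2}+\wt A^k_{1,2}(Z)$ with $\wt A^k_{1,2}$ linear. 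The contribution of $\bar A^k_{1,2}\d_kZ_2$ is trivially bounded by $\|\nabla Z_2\|_{\dot{\B}^\sigma_{p,1}}$.

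For the two genuinely nonlinear terms, I invoke the standard product law in homogeneous Besov spaces (stated in the Appendix), namely
$$\|uv\|_{\dot{\B}^\sigma_{p,1}}\lesssim\|u\|_{\dot{\B}^{d/p}_{p,1}}\|v\|_{\dot{\B}^\sigma_{p,1}},$$
which holds precisely on the range $\sigma\in\,]d/p-d/p^*,d/p]$ appearing in the statement. Applied to $A^k_{1,1}(V)\d_kZ_1$ with $u=Z_2$, $v=\d_kZ_1$, this yields the second summand $\|Z_2\|_{\dot{\B}^{d/p}_{p,1}}\|\nabla Z_1\|_{\dot{\B}^\sigma_{p,1}}$ of the claim. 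Applied to $\wt A^k_{1,2}(Z)\d_kZ_2$ with $u=Z$, $v=\d_kZ_2$, it produces a term $\|Z\|_{\dot{\B}^{d/p}_{p,1}}\|\nabla Z_2\|_{\dot{\B}^\sigma_{p,1}}$ which is absorbed into $\|\nabla Z_2\|_{\dot{\B}^\sigma_{p,1}}$ by the smallness assumption \eqref{eq:smallZ}. Summing over $k$ concludes the proof. There is no serious obstacle here: both bounds are essentially immediate once one recognizes the algebraic identity $\d_tZ_2=-L_2W$ and exploits \eqref{StructAssum} to ensure that the convection term in the $Z_1$ equation always carries a factor of $Z_2$, which is exactly what the structure of the right-hand side reflects.
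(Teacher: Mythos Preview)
Your proposal is correct and follows essentially the same approach as the paper: both derive the algebraic identity $\d_tZ_2=-L_2W$ for the second bound, and for the first bound both write out the $Z_1$ equation, split the coefficients using \eqref{StructAssum} and \eqref{AkLinear}, and apply the product law \eqref{eq:prod2} together with the smallness \eqref{eq:smallZ}. One minor remark: the product law \eqref{eq:prod2} actually holds on the broader range $-d/p<\sigma\le d/p$ (since $p\ge2$), not ``precisely'' on $]d/p-d/{p^*},d/p]$; the latter interval is simply the one relevant for the lemma's statement and is contained in the former.
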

\begin{proof}  
The second inequality just stems from the fact that 
the definition of $W$ in \eqref{def:WW}  is equivalent to
\begin{equation}\label{eq:WW}\d_tZ_2=-L_2 W.\end{equation}
The proof of the first item relies on  the explicit expression
of $\d_tZ_1$: since  $\bar A^k_{1,1}=0,$ we have for all $k\in\{1,\cdots,d\},$
\begin{equation}\label{eq:Z1}
\d_tZ_1 +\sum_{k=1}^d
\bar A^k_{1,2}\d_kZ_2= -
\sum_{k=1}^d\Bigl(\wt A^k_{1,1}(Z_2)\d_kZ_1 + \wt A^k_{1,2}(Z)\d_kZ_2\Bigr)\cdotp
\end{equation}
All the terms of the right-hand side are at least quadratic, and   \eqref{StructAssum}, \eqref{eq:smallZ}
thus ensure the desired inequality for $\d_tZ_1.$ 
\end{proof}
We now turn to the proof of Proposition \ref{APLP}.
\medbreak
\subsubsection*{Step 1: Estimate for  \texorpdfstring{$W$}{TEXT}.} We have the following statement.

\begin{Prop}\label{PropW} 
Denoting by $c$ the constant in \eqref{partdissip2}, we have
\begin{align}\label{est:W}
\|W(t)\|_{\dot{\mathbb{B}}^{\frac{d}{p}}_{p,1}}^\ell+ c\int_0^t \|W\|_{\dot{\mathbb{B}}^{\frac{d}{p}}_{p,1}}^\ell &\leq
\|W_0\|_{\dot{\mathbb{B}}^{\frac{d}{p}}_{p,1}}^\ell+ C\int_0^t  \|(\nabla Z_2,W)\|_{\dot\B^{\frac dp+1}_{p,1}}^\ell
\notag\\&+ C\int_0^t  \|(\nabla Z_2,W)\|_{\dot\B^{\frac dp}_{p,1}} \|Z\|_{\dot\B^{\frac dp}_{p,1}\cap\dot\B^{\frac dp+1}_{p,1}}
+ C\int_0^t  \|Z_2\|_{\dot\B^{\frac dp}_{p,1}}\|\nabla Z_1\|_{\dot\B^{\frac dp}_{p,1}}.\end{align}
\end{Prop}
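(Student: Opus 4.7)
The strategy is to derive an $L^p$ energy estimate for the damped equation \eqref{eq:W} frequency by frequency. The pointwise inequality
\[
(L_2 W)\cdot |W|^{p-2} W = |W|^{p-2}(L_2 W\cdot W)\geq c|W|^p \qquad\text{for all } W\in\R^{n_2},
\]
which follows from \eqref{partdissip2} since $L_2W\cdot W=\tfrac12((L_2+L_2^T)W\cdot W)\geq c|W|^2$, is the source of the damping. Applying $\dot\Delta_j$ to \eqref{eq:W}, testing against $|\dot\Delta_j W|^{p-2}\dot\Delta_j W$ and using the above pointwise bound for the damping term together with Hölder's inequality for the source, one obtains after the standard regularization argument
\[
\frac{d}{dt}\|\dot\Delta_j W\|_{L^p} + c\|\dot\Delta_j W\|_{L^p} \leq \|\dot\Delta_j F\|_{L^p},
\]
where $F$ denotes the right-hand side of \eqref{eq:W}. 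Multiplying by $2^{jd/p}$, summing over $j\leq J_1+1$ and integrating in time reduces the proof to bounding $\|F\|^\ell_{\dot\B^{d/p}_{p,1}}$ by the right-hand side of \eqref{est:W}.

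To estimate $F$, I expand the time derivative and decompose each coefficient via \eqref{AkLinear} as $A^k_{i,j}(V)=\bar A^k_{i,j}+\wt A^k_{i,j}(Z)$, separating a leading linear part from a quadratic remainder. The leading linear part,
\[
L_2^{-1}\sum_k\bigl(\bar A^k_{2,1}\d_k\d_tZ_1+\bar A^k_{2,2}\d_k\d_tZ_2\bigr),
\]
is treated by substituting \eqref{eq:WW} (which gives $\d_tZ_2=-L_2W$) and the leading linear part of \eqref{eq:Z1} (which, thanks to $\bar A^k_{1,1}=0$, gives $\d_tZ_1=-\sum_\ell\bar A^\ell_{1,2}\d_\ell Z_2$ modulo quadratic terms), producing a contribution bounded by $\|(\nabla Z_2,W)\|^\ell_{\dot\B^{d/p+1}_{p,1}}$, which is exactly the first term on the right of \eqref{est:W}. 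The quadratic contributions to $F$ come in the two shapes $\d_t(\wt A^k(Z))\,\d_kZ_j$ and $\wt A^k(Z)\,\d_k\d_tZ_j$. For both, my plan is to apply Lemma \ref{dtZLp} at $\sigma=d/p$ to control $\d_tZ$ in $\dot\B^{d/p}_{p,1}$ by $\|(\nabla Z_2,W)\|_{\dot\B^{d/p}_{p,1}}$ plus the correction $\|Z_2\|_{\dot\B^{d/p}_{p,1}}\|\nabla Z_1\|_{\dot\B^{d/p}_{p,1}}$ (precisely the third term of \eqref{est:W}), and then to pair this with the remaining factor, whose derivative pieces are controlled in $\dot\B^{d/p+1}_{p,1}$, via the Besov product law at level $d/p$; the outcome matches the second term of \eqref{est:W}.

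The main obstacle is the bookkeeping: Lemma \ref{dtZLp} is valid only for $\sigma\leq d/p$, so any surplus derivative falling on $\d_tZ$ must be transferred to the other factor in the product by a product-rule redistribution before invoking the Besov algebra property of $\dot\B^{d/p}_{p,1}$. The structural assumption \eqref{StructAssum} is decisive in this bookkeeping: the linearity of $A^k_{1,1}$ with respect to $Z_2$ alone (rather than the full $Z$) is what guarantees the nonlinear correction to $\d_tZ_1$ takes the specific form $\|Z_2\|_{\dot\B^{d/p}_{p,1}}\|\nabla Z_1\|_{\dot\B^{d/p}_{p,1}}$, matching the third term of \eqref{est:W} exactly, rather than a less favourable quantity of the form $\|Z\|_{\dot\B^{d/p}_{p,1}}\|\nabla Z_1\|_{\dot\B^{d/p}_{p,1}}$.
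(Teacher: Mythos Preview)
Your overall strategy matches the paper's: localize \eqref{eq:W}, exploit \eqref{partdissip2} for the damping, expand the time derivative in the source $h_1$, separate linear from quadratic pieces, and invoke Lemma~\ref{dtZLp} to control $\partial_tZ$. The one technical difference lies in how the terms $\wt A^k_{2,m}(Z)\,\partial_k\partial_tZ_m$ are handled: you propose a Leibniz redistribution, whereas the paper writes
\[
\dot\Delta_j\bigl(\wt A^k_{2,m}(Z)\,\partial_t\partial_kZ_m\bigr)=\wt A^k_{2,m}(Z)\,\partial_t\partial_kZ_{m,j}-[\wt A^k_{2,m}(Z),\dot\Delta_j]\,\partial_t\partial_kZ_m
\]
and applies the commutator estimate \eqref{eq:com1} to the second piece. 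Both routes lead to the same bounds, but your description has a small gap: after the product rule you still carry the total-derivative term $\partial_k\bigl(\wt A^k_{2,m}(Z)\,\partial_tZ_m\bigr)$, and the derivative has \emph{not} been transferred to either factor. Its $\dot\B^{d/p}_{p,1}$ norm is not controlled by the algebra property alone, since one factor would have to sit in $\dot\B^{d/p+1}_{p,1}$ and Lemma~\ref{dtZLp} does not provide this for $\partial_tZ$. What closes the argument---and what the paper invokes explicitly via \eqref{eq:comparaison}/\eqref{eq:bernstein}---is the low-frequency restriction: since only $\|\cdot\|^\ell_{\dot\B^{d/p}_{p,1}}$ is needed and $J_1\leq0$, the surplus derivative costs at most a factor $2^{J_1}\lesssim1$. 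The same remark applies to the quadratic corrections coming from the right-hand side of \eqref{eq:Z1} after you differentiate by $\partial_k$. With that observation in hand your argument goes through.
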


\begin{proof}
From  \eqref{eq:W}, we gather that 
\begin{equation}\label{eq:eqW}
\d_tW+{L_2} W = h\triangleq {L_2}^{-1}h_1\\
\with  h_1\triangleq \sum_{k=1}^d\d_t\bigl(A^k_{2,1}(V)\d_kZ_1+A_{2,2}^k(V)\d_kZ_2\bigr)\cdotp
 \end{equation}
Applying  $\ddj$ to \eqref{eq:eqW} and taking the scalar product 
with $W_j|W_j|^{p-2}$ (where $W_j\triangleq \ddj W$) yields, thanks to \eqref{partdissip2}, 
\begin{equation}\label{eq:W0}\frac1p\frac d{dt}\|W_j\|_{L^p}^p +c\|W_j\|_{L^p}^p\leq C \|\ddj h_1\|_{L^p} \|W_j\|_{L^p}.
\end{equation}

 For bounding  $h_1,$  we use that
 for all $k\in\{1,\cdots,d\},$ 
 $$\displaylines{\d_t(A^k_{2,1}(V)\d_k Z_1 +A^k_{2,2}(V)\d_k Z_2) = D_VA^k_{2,1}(V)\d_t Z\d_kZ_1 
 +\bar A^k_{2,1}\d_t\d_kZ_1  \hfill\cr\hfill+  \wt A^k_{2,1}(Z)\d_t\d_kZ_1
+ D_VA^k_{2,2}(V)\d_t Z\d_kZ_2 
 +\bar A^k_{2,2}\d_t\d_kZ_2+  \wt A^k_{2,2}(Z)\d_t\d_kZ_2.}$$
 For $m=1,2,$ we have, according to Proposition \ref{LP}, Lemma \ref{dtZLp}
 and the fact that $D_VA^k_{2,m}$ for $m=1,2$ is constant,
  $$\begin{aligned} \|D_VA^k_{2,m}(V)\d_t Z\d_kZ_m\|_{\dot\B^{\frac{d}{p}}_{p,1}}^\ell&\lesssim 
 \|\d_t Z\|_{\dot\B^{\frac dp}_{p,1}} \|\nabla Z\|_{\dot\B^{\frac{d}{p}}_{p,1}}\\
  &\lesssim(\|(\nabla  Z_2,W)\|_{\dot\B^{\frac dp}_{p,1}}+\|Z_2\|_{\dot\B^{\frac dp}_{p,1}}\|\nabla Z_1\|_{\dot\B^{\frac dp}_{p,1}})\|Z\|_{\dot\B^{\frac{d}{p}+1}_{p,1}}.
  \end{aligned}$$
   Bounding the terms $\wt A^k_{2,m}(Z)\d_t\d_kZ_m$  involves a commutator estimate.
  We write $$\ddj(\wt A^k_{2,m}(Z)\d_t\d_kZ_m)=\wt A^k_{2,m}(Z)\d_t\d_kZ_{m,j}-R^{m,1}_j\with R^{m,1}_j=[\wt A^k_{2,m}(Z),\ddj]\d_t\d_kZ_m.$$ 
  Now, combining H\"older's inequality, embedding and \eqref{eq:comparaison}   yields
  $$\begin{aligned} \sum_{j\leq J_1}2^{j\frac{d}{p}}\|\wt A^k_{2,m}(Z)\d_t\d_kZ_{m,j}\|_{L^p}
  &\lesssim \|Z\|_{L^\infty}\sum_{j\leq J_1}
  2^{j\frac dp}\|\d_t\d_kZ_{m,j}\|_{L^p}\\
  &\lesssim 
 \|Z\|_{\dot\B^{\frac dp}_{p,1}}\|\d_t Z\|_{\dot\B^{\frac dp}_{p,1}} \\
  &\lesssim (\|(\nabla  Z_2,W)\|_{\dot\B^{\frac dp}_{p,1}}+\|Z_2\|_{\dot\B^{\frac dp}_{p,1}}\|\nabla Z_1\|_{\dot\B^{\frac dp}_{p,1}})\|Z\|_{\dot\B^{\frac{d}{p}}_{p,1}}
  \end{aligned}$$
and using \eqref{eq:com1}, we obtain
   $$\begin{aligned} 
   \sum_{j\leq J_1}   2^{j\frac dp}\|R_j^{m,1}\|_{L^p} &\lesssim 
 \|\d_t Z\|_{\dot\B^{\frac dp}_{p,1}} \|\nabla Z\|_{\dot\B^{\frac dp}_{p,1}}\\
  &\lesssim (\|(\nabla  Z_2,W)\|_{\dot\B^{\frac dp}_{p,1}}+\|Z_2\|_{\dot\B^{\frac dp}_{p,1}}\|\nabla Z_1\|_{\dot\B^{\frac dp}_{p,1}})\|Z\|_{\dot\B^{\frac{d}{p}+1}_{p,1}}.
  \end{aligned}$$
Differentiating \eqref{eq:WW} and \eqref{eq:Z1}, 
then using product laws (namely Proposition \ref{LP}) and \eqref{eq:bernstein} yields
$$\begin{aligned}
\|(\d_t\nabla Z_1,\d_t\nabla Z_2)\|^\ell_{\dot\B^{\frac dp}_{p,1}}&\lesssim 
\|W\|^\ell_{\dot\B^{\frac dp+1}_{p,1}}+\|\nabla Z_2\|_{\dot\B^{\frac dp+1}_{p,1}}^\ell
+\|Z_2\|_{\dot\B^{\frac dp}_{p,1}}\|\nabla Z_1\|_{\dot\B^{\frac dp}_{p,1}}+\|Z\|_{\dot\B^{\frac dp}_{p,1}}\|\nabla Z_2\|_{\dot\B^{\frac dp}_{p,1}}.\end{aligned}
$$
Hence, using Lemma \ref{SimpliCarre}, multiplying the resulting inequality  by $2^{j\frac{d}{p}}$, summing up on $j\leq J_1+1$ and taking advantage of  \eqref{eq:smallZ}, we end up with \eqref{est:W}.
 \end{proof}

\subsubsection*{Step 2: Estimates for  \texorpdfstring{$Z_1$}{TEXT}} We have the following proposition.

\begin{Prop}\label{PropZ1} There exists come  $\kappa_0>0$ such that
\begin{multline}\label{est:Z1}
\|Z_1(t)\|^\ell_{\dot\B^{\frac dp}_{p,1}}
  + \kappa_0\int_0^t  \|Z_1\|^\ell_{\dot\B^{\frac dp+2}_{p,1}}
 \leq  \|Z_{1,0}\|^\ell_{\dot\B^{\frac dp}_{p,1}}\\
 +C\biggl(\int_0^t\|(W,\nabla Z_2)\|^\ell_{\dot\B^{\frac dp+1}_{p,1}}
  +\int_0^t\|Z_2\|_{\dot\B^{\frac dp}_{p,1}}\|Z\|_{\dot\B^{\frac dp+1}_{p,1}}+\int_0^t\|Z\|_{\dot\B^{\frac dp+1}_{p,1}}\|W\|_{\dot\B^{\frac dp}_{p,1}}+\int_0^t\|Z\|_{\dot\B^{\frac dp+1}_{p,1}}^2\biggr)\cdotp
\end{multline}
\end{Prop}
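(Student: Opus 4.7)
The plan is to read \eqref{Z1withW} as a second-order parabolic equation for $Z_1$ in the low-frequency regime, invoke a maximal regularity estimate for it, and then dispose of the five nonlinear source terms one by one. Under the first half of \eqref{eq:ellipticity}, the ``transport'' piece $\sum_k\bar A^k_{1,1}\d_kZ_1$ vanishes, so \eqref{Z1withW} reads $\d_tZ_1+\cA Z_1=f_1+f_2+f_3+f_4+f_5$ with $\cA$ strongly elliptic of order two (this is precisely where the equivalence between (SK) and the ellipticity of $\cA$, established in the appendix, is put to use). A standard frequency-localized $L^p$ energy argument run for each block $Z_{1,j}$ with $j\leq J_1+1$, or directly Lemma \ref{l:parabolic}, yields
$$\|Z_1\|^\ell_{L^\infty_t(\dot\B^{\frac dp}_{p,1})}+\kappa_0\|Z_1\|^\ell_{L^1_t(\dot\B^{\frac dp+2}_{p,1})}\lesssim\|Z_{1,0}\|^\ell_{\dot\B^{\frac dp}_{p,1}}+\sum_{i=1}^{5}\|f_i\|^\ell_{L^1_t(\dot\B^{\frac dp}_{p,1})}.$$
The whole task is then reduced to controlling each $f_i$ in $L^1_t(\dot\B^{\frac dp}_{p,1})^\ell$.

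For the source terms I systematically split $A^k_{p,m}(V)=\bar A^k_{p,m}+\wt A^k_{p,m}(Z)$, using \eqref{AkLinear} so that each $\wt A^k_{p,m}(Z)$ counts as exactly one power of $Z$. Only the frozen-coefficient pieces of $f_1$ and $f_2$ produce linear contributions: from $f_2$ one reads off $\|\nabla W\|^\ell_{\dot\B^{\frac dp}_{p,1}}\leq\|W\|^\ell_{\dot\B^{\frac dp+1}_{p,1}}$, and from $f_1$ one gets $\|\nabla^2Z_2\|^\ell_{\dot\B^{\frac dp}_{p,1}}\leq\|\nabla Z_2\|^\ell_{\dot\B^{\frac dp+1}_{p,1}}$. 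Everything else is at least quadratic in $Z$ and must be sorted into the three nonlinear buckets on the right-hand side of \eqref{est:Z1}: the remaining pieces of $f_1$ (of shape $Z\cdot\nabla^2Z_2$ or $\nabla Z\cdot\nabla Z_2$) together with $f_5=-\wt A^k_{1,1}(Z)\d_kZ_1$ (linear in $Z_2$ by \eqref{StructAssum}, hence of shape $Z_2\cdot\nabla Z_1$) land in $\|Z_2\|_{\dot\B^{\frac dp}_{p,1}}\|Z\|_{\dot\B^{\frac dp+1}_{p,1}}$; the quadratic piece $\wt A^k_{1,2}(Z)\d_kW$ of $f_2$ gives $\|Z\|_{\dot\B^{\frac dp+1}_{p,1}}\|W\|_{\dot\B^{\frac dp}_{p,1}}$; and $f_3,f_4$, of shape $Z\cdot\nabla Z\cdot\nabla Z_1$ and $Z\cdot\nabla^2Z_1$ respectively, fall into $\|Z\|_{\dot\B^{\frac dp+1}_{p,1}}^2$. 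Each bound relies on Proposition \ref{LP} (product laws in $\dot\B^{\frac dp}_{p,1}$) combined with \eqref{eq:comparaison} and the embedding $\dot\B^{\frac d2+1}_{2,1}\hookrightarrow\dot\B^{\frac dp+1}_{p,1}$, valid since $p\geq2$, so as to reconcile the $L^2$-based high-frequency information on $Z$ with the $L^p$-based low-frequency output.

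The main obstacle is precisely this last bookkeeping: the five $f_i$ mix factors living on different Besov scales, the outer derivative in $f_1$ doubles the number of subterms after expansion, and the applicable product laws must transfer derivatives from one factor to the other (for instance reading $Z\cdot\d_kW$ as $\|Z\|_{\dot\B^{\frac dp+1}_{p,1}}\|\d_kW\|_{\dot\B^{\frac dp-1}_{p,1}}$ rather than $\|Z\|_{\dot\B^{\frac dp}_{p,1}}\|\d_kW\|_{\dot\B^{\frac dp}_{p,1}}$). It is exactly to keep all such products tame without derivative loss that the paper assumes $p\in[2,4]$ for $d\leq4$ and $p\in[2,\frac{2d}{d-2}]$ for $d\geq5$. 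Plugging the estimates of the $f_i$ back into the maximal regularity inequality yields \eqref{est:Z1}.
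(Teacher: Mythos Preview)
Your overall scheme --- parabolic maximal regularity for \eqref{Z1withW} followed by estimating each $f_i$ in $\dot\B^{\frac dp}_{p,1}$ --- is the same as the paper's, and the linear pieces of $f_1,f_2$ together with $f_5$ are handled correctly. The gap is in your treatment of the \emph{quadratic} parts of $f_1,f_2,f_3,f_4$: you invoke a product law that ``transfers derivatives from one factor to the other'', e.g.\ $\|Z\,\d_kW\|_{\dot\B^{\frac dp}_{p,1}}\lesssim\|Z\|_{\dot\B^{\frac dp+1}_{p,1}}\|\d_kW\|_{\dot\B^{\frac dp-1}_{p,1}}$. No such inequality is available in Proposition~\ref{LP}; all the product laws there require one factor in $\dot\B^{\frac dp}_{p,1}$, and an estimate of the form $\|ab\|_{\dot\B^{\frac dp}_{p,1}}\lesssim\|a\|_{\dot\B^{\frac dp+1}_{p,1}}\|b\|_{\dot\B^{\frac dp-1}_{p,1}}$ is in fact false in general. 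Applied directly, Proposition~\ref{LP} only yields $\|\wt A(Z)\d_kW\|_{\dot\B^{\frac dp}_{p,1}}\lesssim\|Z\|_{\dot\B^{\frac dp}_{p,1}}\|W\|_{\dot\B^{\frac dp+1}_{p,1}}$, with the full (not merely low-frequency) $\dot\B^{\frac dp+1}_{p,1}$ norm on $W$, and similarly $\|\wt A(Z)\d_k\d_\ell Z_m\|_{\dot\B^{\frac dp}_{p,1}}\lesssim\|Z\|_{\dot\B^{\frac dp}_{p,1}}\|Z_m\|_{\dot\B^{\frac dp+2}_{p,1}}$: neither can be closed against the right-hand side of \eqref{est:Z1}.

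The device the paper actually uses is a \emph{commutator} decomposition: for a term of shape $\wt A(Z)\,\nabla b$ one writes $\ddj\bigl(\wt A(Z)\nabla b\bigr)=\wt A(Z)\nabla b_j+[\wt A(Z),\ddj]\nabla b$. The first piece, after summing over $j\leq J_1$, gives $\|Z\|_{\dot\B^{\frac dp}_{p,1}}\|b\|^{\ell}_{\dot\B^{\frac dp+1}_{p,1}}$ (low frequencies only on $b$), which is absorbed using the smallness \eqref{eq:smallZ}; the commutator is controlled by \eqref{eq:com1} and produces precisely $\|\nabla Z\|_{\dot\B^{\frac dp}_{p,1}}\|b\|_{\dot\B^{\frac dp}_{p,1}}$. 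This is how, for instance, $f_4=\wt A^k_{1,2}(Z)L_2^{-1}\bar A^\ell_{2,1}\d_k\d_\ell Z_1$ ends up bounded by $\|Z\|_{\dot\B^{\frac dp+1}_{p,1}}^2+\|Z\|_{\dot\B^{\frac dp}_{p,1}}\|Z_1\|^\ell_{\dot\B^{\frac dp+2}_{p,1}}$, with the second term swallowed by the left-hand side. Without this commutator trick the argument does not close; it is the missing idea in your proposal. (Incidentally, the restriction $p\in[2,4]$ is needed for the \emph{high}-frequency analysis via Lemma~\ref{CP}, not here.)
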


\begin{proof}
Applying $\ddj$ to \eqref{Z1withW}, taking the scalar product with $|Z_{1,j}|^{p-2}Z_{1,j}$ 
and using \eqref{Elliptic} gives for some $\kappa_0>0,$
$$\frac{1}{p}\frac{d}{dt}\|Z_{1,j}\|_{L^p}^p+\kappa_02^{2j}\| Z_{1,j}\|_{L^p}^p \lesssim
\|(f_{1,j},f_{2,j},f_{3,j},f_{4,j},f_{5,j})\|_{L^p}\|Z_{1,j}\|_{L^p}^{p-1}$$
 where the $f_i$'s are defined in \eqref{Z1withW}. 
\smallbreak
Using Lemma \ref{SimpliCarre}, multiplying by $2^{j\frac dp}$
then summing on $j\leq J_1,$ this easily leads to
\begin{equation}\label{eq:fii}\|Z_1(t)\|^\ell_{\dot\B^{\frac dp}_{p,1}}
  + \kappa_0\int_0^t  \|Z_1\|^\ell_{\dot\B^{\frac dp+2}_{p,1}}
 \lesssim  \|Z_0\|^\ell_{\dot\B^{\frac dp}_{p,1}}+\|(f_1,f_2,f_3,f_4,f_5)\|^\ell_{\dot\B^{\frac dp}_{p,1}}.
 \end{equation} 
 In order to bound the term corresponding to $f_1,$ 
 we use the  decomposition:
\begin{multline}\label{eq:f1}
f_1=\sum_{k=1}^d\sum_{m=1}^d\Bigl(
\bar A^k_{1,2}L_2^{-1}\bar A^k_{2,2}\d_k\d_m Z_2
+\bar A^k_{1,2}\d_k\bigl(L_2^{-1}
\wt A^m_{2,2}(Z)\d_m Z_2\bigr)\\+\wt A^k_{1,2}(Z)L_2^{-1}\bar A^m_{2,2}\d_k\d_m Z_2
+\wt A^k_{1,2}(Z)\d_k\bigl(L_2^{-1}
\wt A^m_{2,2}(Z)\d_m Z_2\bigr)\Bigr)\cdotp
\end{multline}
The first term in the right-hand side term 
obviously satisfies
$$\begin{aligned}
\|\bar A^k_{1,2}L_2^{-1}\bar A^k_{2,2}\d_k\d_m Z_2\|^\ell_{\dot\B^{\frac dp}_{p,1}} &\lesssim \|Z_2\|^\ell_{\dot\B^{\frac dp+2}_{p,1}}.\end{aligned}$$
For the other terms of \eqref{eq:f1},  applying directly the product laws of Proposition \ref{LP} would entail 
a loss of one derivative. To avoid it, we will take advantage
once more of the commutator estimate provided by Lemma
\ref{CP}. Let us explain how to proceed for the 
last term of \eqref{eq:f1} (which is the most complicated).
We have 
$$\begin{aligned}\wt A^k_{1,2}(Z)
\d_k(L_2^{-1}\wt A^m_{2,2}(Z)\d_m Z_2)=\wt A^k_{1,2}(Z)L_2^{-1}
\d_k(\wt A^m_{2,2}(Z))\d_m Z_2+
\wt A^k_{1,2}(Z)L_2^{-1}
\wt A^\ell_{2,2}(Z)\d_k\d_m Z_2. \end{aligned}$$
The first term can be bounded directly 
as follows: 
$$
\|\wt A^k_{1,2}(Z)L_2^{-1}
\d_k(\wt A^m_{2,2}(Z))\d_m Z_2\|^\ell_{\dot\B^{\frac dp}_{p,1}} \lesssim \|Z\|_{\dot\B^{\frac dp}_{p,1}}
\|\nabla Z\|_{\dot\B^{\frac dp}_{p,1}}^2.$$
To handle  the second term, one introduces a  commutator
with $\ddj$ as follows: 
$$
\ddj\Bigl(\wt A^k_{1,2}(Z)L_2^{-1}
\wt A^\ell_{2,2}(Z)\d_k\d_mZ_2\Bigr)=
\wt A^k_{1,2}(Z)L_2^{-1}\wt A^m_{2,2}(Z)\d_k\d_m Z_{2,j}-R^{m,2}_j
 $$
 where $R^{m,2}_j\triangleq
 \left[\wt A^k_{1,2}(Z)L_2^{-1}\wt A^m_{2,2}(Z),\ddj\right]\d_k\d_m Z_2.$
\medbreak
 We have
$$\begin{aligned}\sum_{j\leq J_1}2^{j\frac{d}{p}}
\|\wt A^k_{1,2}(Z)L_2^{-1}\wt A^\ell_{2,2}(Z)\d_k\d_\ell Z_{2,j}\|_{L^p}&\lesssim
\sum_{j\leq J_1}2^{j\frac{d}{p}} \|Z\|_{L^\infty}^2
\|\nabla^2Z_{2,j}\|_{L^p}\\&\lesssim  \|Z\|_{\dot\B^{\frac dp}_{p,1}}^2\|Z_2\|^\ell_{\dot\B^{\frac dp+2}_{p,1}},
  \end{aligned}$$
and \eqref{eq:com1} yields
 $$\begin{aligned} 
 \sum_{j\leq J_1}2^{j\frac dp} \| R_j^{\ell,2}\|_{L^p} &\lesssim \|\nabla (Z\otimes Z)\|_{\dot\B^{\frac dp}_{p,1}}
 \|\nabla Z_2\|_{\dot\B^{\frac dp}_{p,1}} \\
  &\lesssim \|Z\|_{\dot\B^{\frac dp}_{p,1}}\|Z\|_{\dot\B^{\frac dp+1}_{p,1}}
 \|Z_2\|_{\dot\B^{\frac dp+1}_{p,1}}. 
  \end{aligned}$$  
  
  The other terms of \eqref{eq:f1} may be handled similarly. 
  In the end, using \eqref{eq:smallZ},  we obtain
$$\|f_1\|^\ell_{\dot\B^{\frac dp}_{p,1}} \lesssim \|Z_2\|^\ell_{\dot\B^{\frac dp+2}_{p,1}}+\| Z\|_{\dot\B^{\frac dp+1}_{p,1}}^2.$$
Noticing that the component $Z_2$ did not play a special role in the above computations, we can reproduce 
the procedure for $f_3$  and $f_4$, and eventually get:
$$\begin{aligned}
\|f_3\|^\ell_{\dot\B^{\frac dp}_{p,1}}&=\|\sum_{k=1}^d\sum_{m=1}^d A_{1,2}^k(V)\partial_k(L_2^{-1}\wt A^{m}_{2,1}(Z)\d_mZ_1)\|^\ell_{\dot\B^{\frac dp}_{p,1}}\\&\lesssim \| Z\|_{\dot\B^{\frac dp+1}_{p,1}}^2+\|Z\|_{\dot\B^{\frac dp}_{p,1}}\|Z_1\|^\ell_{\dot\B^{\frac dp+2}_{p,1}},\\[1ex]
\|f_4\|_{\dot\B^{\frac dp}_{p,1}}^\ell &\lesssim\|\sum_{k=1}^d\sum_{m=1}^d \wt A_{1,2}^k(Z)L_2^{-1}A^{m}_{2,1}\d_k\d_m Z_1\|^\ell_{\dot\B^{\frac dp}_{p,1}}\\ &\lesssim 
\| Z\|_{\dot\B^{\frac dp+1}_{p,1}}^2+\|Z\|_{\dot\B^{\frac dp}_{p,1}}\|Z_1\|^\ell_{\dot\B^{\frac dp+2}_{p,1}}\cdotp
\end{aligned}$$
For $f_2$, we write that 
$$f_2=-\sum_{k=1}^d
\bigl(\bar A_{1,2}^k\d_kW-\wt A_{1,2}^k(Z)\d_kW\bigr)\cdotp$$
Hence, 
$$\ddj f_2=-\sum_{k=1}^d \bar A_{1,2}^k\d_kW_j
-\wt A_{1,2}^k(Z)\d_kW_j
+[\wt A_{1,2}^k(Z),\ddj]\d_kW.$$
This allows to get
$$\begin{aligned}
\|f_2\|^\ell_{\dot\B^{\frac dp}_{p,1}}&\lesssim\|\sum_{k=1}^d\bar A_{1,2}^k\partial_{k}W^\ell \|_{\dot\B^{\frac dp}_{p,1}}+\|Z\|_{\dot\B^{\frac dp}_{p,1}}\| W\|^\ell_{\dot\B^{\frac dp+1}_{p,1}}+\|\nabla Z\|_{\dot\B^{\frac dp}_{p,1}}\| W\|_{\dot\B^{\frac dp}_{p,1}}\\
&\lesssim\|W\|^\ell_{\dot\B^{\frac dp+1}_{p,1}}
+ \|Z\|_{\dot\B^{\frac dp+1}_{p,1}}\|W\|_{\dot\B^{\frac dp}_{p,1}}.\end{aligned}$$
 The structure condition \eqref{StructAssum}  comes into play only for bounding $f_5.$
Thanks to it and to  \eqref{eq:smallZ}, we get
$$ \|f_5\|^\ell_{\dot\B^{\frac dp}_{p,1}} \lesssim \|Z_2\|_{\dot\B^{\frac dp}_{p,1}}\|\nabla Z_1\|_{\dot\B^{\frac dp}_{p,1}}.$$
Inserting the estimates pertaining to $f_1,f_2,f_3,f_4,f_5$ in \eqref{eq:fii} and using \eqref{eq:smallZ} completes the proof of the proposition.
\end{proof} Now, choosing $J_1$ small enough (so that the higher-order linear terms of the right-hand side are absorbed by the left-hand side) and putting together \eqref{est:W} and \eqref{est:Z1}, we obtain
\begin{multline}\label{est:WZ_1}
\|(W,Z_1)\|^\ell_{L^\infty_t(\dot\B^{\frac dp}_{p,1})}
  + \kappa_0\int_0^t\biggl(\|Z_1\|^\ell_{\dot\B^{\frac dp+2}_{p,1}}+
\|W\|_{\dot{\mathbb{B}}^{\frac{d}{p}}_{p,1}}^\ell\biggr)
 \leq  \|(W_0,Z_{1,0})\|^\ell_{\dot\B^{\frac dp}_{p,1}}\\
 +C\biggl(\int_0^t\|Z_2\|^\ell_{\dot\B^{\frac dp+2}_{p,1}}+\int_0^T\|Z_2\|_{\dot\B^{\frac dp}_{p,1}}\|Z\|_{\dot\B^{\frac dp+1}_{p,1}}+\int_0^t\|(\nabla Z_2,W)\|_{\dot\B^{\frac dp}_{p,1}} \|Z\|_{\dot\B^{\frac dp}_{p,1}\cap \dot\B^{\frac dp+1}_{p,1}}+\int_0^T\|Z\|_{\dot\B^{\frac dp+1}_{p,1}}^2\biggr)
 \cdotp
\end{multline}

\subsubsection*{Step 3: Recovering information for $Z_2$.}
In order to bound $Z_2$ from $W,$ one can use the identity:
\begin{equation}\label{eq:WZ2}W-Z_2=L_2^{-1}\sum_{k=1}^d\Bigl(\bar A^k_{2,1}\d_kZ_1+\wt A^k_{2,1}(Z)\d_kZ_1
+\bar A^k_{2,2}\d_kZ_2+\wt A^k_{2,2}(Z)\d_kZ_2\Bigr)\cdotp\end{equation}
It implies that
$$\begin{aligned}
\|W- Z_2\|_{\dot\B^{\frac dp}_{p,1}}^\ell
&\lesssim \|\nabla  Z\|_{\dot\B^{\frac dp}_{p,1}}^\ell 
+ \|Z\|_{\dot\B^{\frac dp}_{p,1}}\|\nabla Z\|_{\dot\B^{\frac dp}_{p,1}}\\
&\lesssim(1+ \|Z\|_{\dot\B^{\frac dp}_{p,1}})\|\nabla  Z\|_{\dot\B^{\frac dp}_{p,1}}^\ell 
+ \|Z\|_{\dot\B^{\frac dp}_{p,1}}\|Z\|_{\dot\B^{\frac d2+1}_{2,1}}^h.
\end{aligned}$$
Hence, owing to \eqref{eq:smallZ}, if one takes $J_1$ negative enough,
\begin{equation}\label{eq:WZ2l8b}
\|Z_2\|_{\dot\B^{\frac dp}_{p,1}}^\ell \lesssim
\|W\|_{\dot\B^{\frac dp}_{p,1}}^\ell +
\|\nabla Z_1\|_{\dot\B^{\frac dp}_{p,1}}^\ell
+\|Z\|_{\dot\B^{\frac dp}_{p,1}}
\|Z\|_{\dot\B^{\frac d2+1}_{2,1}}^h.
\end{equation}
This already ensures  that
$\|Z_2(t)\|_{\dot\B^{\frac dp}_{p,1}}$ may be bounded 
for all $t\in[0,T]$ by the right-hand side of \eqref{eq:APLP}. 
Next, since for $m=1,2,$ owing to \eqref{eq:bernstein} 
and the linearity
of $A^k,$ it holds that
\begin{align*}
\|\wt A^k_{2,1}(Z)\d_kZ_m\|_{\dot\B^{\frac dp+1}_{p,1}}^\ell
\lesssim& \|\wt A^k_{2,m}(Z)\d_kZ_m^\ell\|_{\dot\B^{\frac dp+1}_{p,1}}^\ell 
+ \|\wt A^k_{2,m}(Z)\d_kZ_m^h\|_{\dot\B^{\frac dp}_{p,1}}^\ell
\\ \lesssim&\|Z\|_{\dot\B^{\frac dp}_{p,1}}\|\nabla Z_m^\ell\|_{\dot\B^{\frac dp+1}_{p,1}}+\|Z\|_{\dot\B^{\frac dp+1}_{p,1}}\|\nabla Z_m^\ell\|_{\dot\B^{\frac dp}_{p,1}}+\|Z\|_{\dot\B^{\frac dp}_{p,1}}\|\nabla Z_m^h\|_{\dot\B^{\frac dp}_{p,1}}
\end{align*}
we have
\begin{equation}\label{eq:WZ2l1}
\|W- Z_2\|_{\dot\B^{\frac dp+1}_{p,1}}^\ell
\lesssim \|\nabla  Z\|_{\dot\B^{\frac dp+1}_{p,1}}^\ell (1+\|Z\|_{\dot\B^{\frac dp}_{p,1}})
+\|Z\|_{\dot\B^{\frac dp+1}_{p,1}}^2+\|Z\|_{\dot\B^{\frac dp}_{p,1}}\|Z\|^h_{\dot\B^{\frac d2+1}_{2,1}}.\end{equation}
Hence, one can also  include 
$\|Z_2\|^\ell_{L_t^1(\dot\B^{\frac dp+1}_{p,1})}$ in the left-hand side of \eqref{eq:APLP}, which completes the proof of Proposition 
\ref{APLP}.


\subsubsection{High frequencies analysis}

Although the functional framework for high frequencies  is the same as in \cite{CBD2}, one cannot 
repeat exactly the computations therein since the non-linear terms contain
 a little amount of low frequencies of $Z$ that are only in spaces  of the type $\dot{\mathbb{B}}^{s}_{p,1}$ for some $p>2$ and thus not in some $\dot{\mathbb{B}}^{s'}_{2,1}$.
 To overcome the difficulty, we 
 will have to resort to more elaborate product laws
 and commutator estimates  (see the Appendix). 
 
 The goal of this part is to prove the following proposition. 
  \begin{Prop} \label{EnergyHf}
  Let $p\in[2,4]$ if $d\leq4$ ($p\in[2,2d/(d-2)]$ if $d\geq5$) and define $p^*$ by the relation $1/p+1/p^*=1/2.$
  Then, the following a priori estimate holds:
$$\displaylines{
\norme{Z(t)}^h_{\dot{\mathbb{B}}^{\frac{d}{2}+1}_{2,1}}+\int_0^t\norme{Z}^h_{\dot{\mathbb{B}}^{\frac{d}{2}+1}_{2,1}}\lesssim\norme{Z_0}^h_{\dot{\mathbb{B}}^{\frac{d}{2}+1}_{2,1}} +\int_0^t\Bigl((\|Z\|_{\dot\B^{\frac d2+1}_{2,1}}^h\!+\!  \|Z\|_{\dot\B^{\frac dp}_{p,1}}^\ell)\|Z\|_{\dot\B^{\frac d2+1}_{2,1}}^h
+\norme{Z}_{\dot{\mathbb{B}}^{\frac{d}{p}}_{p,1}}\norme{ Z}^\ell_{\dot{\mathbb{B}}^{\frac{d}{p}+2}_{p,1}}\Bigr)\cdotp}$$
\end{Prop}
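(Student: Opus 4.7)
The plan is to perform a Beauchard--Zuazua type Lyapunov analysis on each dyadic block, exactly as in \cite{CBD2}, but with the source terms now requiring the mixed $L^p$--$L^2$ product and commutator estimates from the Appendix. First rewrite \eqref{GE} with $\varepsilon=1$ as
\begin{equation*}
\pa_t Z + \sum_{k=1}^d \bar A^k \pa_k Z + LZ = F, \qquad F \triangleq -\sum_{k=1}^d \wt A^k(Z)\pa_k Z,
\end{equation*}
where $\wt A^k(Z)\triangleq A^k(\bar V+Z)-\bar A^k$ is linear in $Z$ thanks to \eqref{AkLinear}. Applying $\ddj$ (which commutes with the constant matrices $\bar A^k$ and $L$) localizes the system with source $F_j$.

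For each dyadic block, the construction recalled in Lemma~2.1 of the excerpt, applied to $Z_j$ in place of $Z$, yields a functional $\cL_j(t)$ equivalent to $\|Z_j\|_{L^2}^2$ and satisfying
\begin{equation*}
\frac{d}{dt}\cL_j + c\min(1,2^{2j})\,\|Z_j\|_{L^2}^2 \leq C\,\|F_j\|_{L^2}\|Z_j\|_{L^2}.
\end{equation*}
Since we only sum over $j\geq J_1$ with $J_1$ the fixed threshold from the low-frequency analysis, the factor $\min(1,2^{2j})$ is bounded below by a positive constant depending on $J_1$. Hence $\sqrt{\cL_j}$ obeys $\tfrac{d}{dt}\sqrt{\cL_j}+c'\sqrt{\cL_j}\lesssim \|F_j\|_{L^2}$; multiplying by $2^{j(d/2+1)}$, summing over $j\geq J_1$ and integrating in time reduces the proof to the bound
\begin{equation*}
\|F\|^h_{\dot{\mathbb B}^{\frac{d}{2}+1}_{2,1}} \lesssim \bigl(\|Z\|^h_{\dot{\mathbb B}^{\frac{d}{2}+1}_{2,1}}+\|Z\|^\ell_{\dot{\mathbb B}^{\frac{d}{p}}_{p,1}}\bigr)\|Z\|^h_{\dot{\mathbb B}^{\frac{d}{2}+1}_{2,1}} + \|Z\|_{\dot{\mathbb B}^{\frac{d}{p}}_{p,1}}\|Z\|^\ell_{\dot{\mathbb B}^{\frac{d}{p}+2}_{p,1}}.
\end{equation*}

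To prove this source bound I exploit the bilinearity of $F$ in $Z$ and decompose $Z=Z^\ell+Z^h$, splitting $F$ into the four products. The pure high-frequency piece $\wt A^k(Z^h)\pa_k Z^h$ is handled by the standard product law in $\dot{\mathbb B}^{\frac{d}{2}+1}_{2,1}$ and produces the quadratic high-frequency contribution. The mixed terms $\wt A^k(Z^\ell)\pa_k Z^h$ and $\wt A^k(Z^h)\pa_k Z^\ell$ are precisely where the $L^p$-only low-frequency content forces one to use the mixed product and commutator estimates from the Appendix: writing
\begin{equation*}
\ddj(\wt A^k(Z)\pa_k Z) = \wt A^k(Z)\pa_k Z_j + [\wt A^k(Z),\ddj]\pa_k Z
\end{equation*}
and applying those lemmas to the commutator yields the contribution $\|Z\|^\ell_{\dot{\mathbb B}^{d/p}_{p,1}}\|Z\|^h_{\dot{\mathbb B}^{d/2+1}_{2,1}}$. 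Finally, the low-low piece $\wt A^k(Z^\ell)\pa_k Z^\ell$ has Fourier support near $|\xi|\sim 2^{J_1}$, so only finitely many dyadic blocks just above $J_1$ receive a nonzero contribution; placing all derivatives on one factor and invoking Bernstein on the $L^p$ low-frequency blocks produces the term $\|Z\|_{\dot{\mathbb B}^{d/p}_{p,1}}\|Z\|^\ell_{\dot{\mathbb B}^{d/p+2}_{p,1}}$.

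The main obstacle is the mixed-integrability commutator $[\wt A^k(Z),\ddj]\pa_k Z$ when $\wt A^k(Z)$ has only $L^p$ low-frequency content with $p>2$ while $\pa_k Z$ sits at high frequencies in an $L^2$-based Besov space. The classical $L^2$ commutator estimates do not apply in this regime; this is exactly the difficulty that the new commutator and product lemmas in the Appendix are designed to resolve. The remaining work is essentially bookkeeping: tracking each of the four pieces, together with the low/high splittings of each of their factors, so as to recover exactly the three expressions on the right-hand side of the proposition and no stray terms.
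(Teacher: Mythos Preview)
There is a genuine derivative-loss gap in your argument. You claim the proof reduces to the bound
\[
\|F\|^h_{\dot{\mathbb B}^{\frac{d}{2}+1}_{2,1}} \lesssim \bigl(\|Z\|^h_{\dot{\mathbb B}^{\frac{d}{2}+1}_{2,1}}+\|Z\|^\ell_{\dot{\mathbb B}^{\frac{d}{p}}_{p,1}}\bigr)\|Z\|^h_{\dot{\mathbb B}^{\frac{d}{2}+1}_{2,1}} + \|Z\|_{\dot{\mathbb B}^{\frac{d}{p}}_{p,1}}\|Z\|^\ell_{\dot{\mathbb B}^{\frac{d}{p}+2}_{p,1}},
\]
but this inequality is simply false for the quasilinear source $F=-\sum_k\wt A^k(Z)\pa_kZ$. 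Already on the pure high--high piece, any product law at regularity $\frac d2+1$ produces a term comparable to $\|Z\|_{L^\infty}\|\nabla Z\|^h_{\dot{\mathbb B}^{\frac d2+1}_{2,1}}\sim \|Z\|_{L^\infty}\|Z\|^h_{\dot{\mathbb B}^{\frac d2+2}_{2,1}}$, one full derivative above what is available. Your commutator decomposition $\ddj(\wt A^k(Z)\pa_kZ)=\wt A^k(Z)\pa_kZ_j+[\ddj,\wt A^k(Z)]\pa_kZ$ does not help if you only feed it into $\|F_j\|_{L^2}$: the commutator is fine, but $\|\wt A^k(Z)\pa_kZ_j\|_{L^2}\lesssim \|Z\|_{L^\infty}2^j\|Z_j\|_{L^2}$, and after multiplying by $2^{j(\frac d2+1)}$ and summing you are back at $\dot{\mathbb B}^{\frac d2+2}_{2,1}$.

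The paper circumvents this in the classical way for symmetric quasilinear systems, and the Lyapunov construction must respect it. For the energy part one localizes with the \emph{full} variable coefficient, writing
\[
\pa_tZ_j+\sum_k \dot S_{j-1}A^k(V)\pa_kZ_j+LZ_j=R^1_j,
\]
so that in the $L^2$ inner product with $Z_j$ the transport term is handled by integration by parts using the symmetry of $A^k(V)$, yielding only $\|\nabla Z\|_{L^\infty}\|Z_j\|_{L^2}^2$ and the commutator $R^1_j$ (bounded by Lemma~\ref{CP}). For the cross term $\cI_j$ the paper inserts the weight $2^{-j}$ in the Lyapunov functional $\cL_j=\|Z_j\|_{L^2}^2+2^{-j}\cI_j$; this shifts the required control on the source $G_j$ in \eqref{eq:Iqbis} down to $\dot{\mathbb B}^{\frac d2}_{2,1}$, where the product and commutator lemmas close without loss. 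In short, the commutator trick you invoke is correct but must be used \emph{inside} the energy inner product so that the principal term $\wt A^k(Z)\pa_kZ_j$ is integrated by parts, not merely estimated in $L^2$.
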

  \begin{proof}
  The starting point is to  differentiate in time the following functional:
\begin{equation}\label{eq:LqHF}  \cL_j 
\triangleq \norme{Z_j}^2_{L^2}+2^{-j} \cI_j,\qquad j\geq J_1,\end{equation} 
with
\begin{equation}\label{def:Iq}
\mathcal{I}_j\triangleq\int_{\mathbb{R}^d}\sum_{q=1}^{n-1}\varepsilon_q
\Im\left((L M_\omega^{q-1}\widehat{Z_j})\cdotp( L M_\omega^q\widehat{Z_j})\right),\end{equation}
and where  $\varepsilon_1,\dotsm,\varepsilon_{n-1}>0$ are chosen small enough. 
\medbreak
We note that
\begin{equation}\label{eq:cLjdt}
\frac d{dt}\cL_j=\frac d{dt}\|Z_j\|_{L^2}^2+2^{-j}
\frac d{dt}\cI_j.\end{equation}
 
 \subsubsection*{Step 1: Energy estimates.}
To  bound the first term in the right-hand side of \eqref{eq:cLjdt}, we localize  System \eqref{GE}  by means of $\ddj$
as follows:
$$\d_tZ_{j}+\sum_{k=1}^d\dot{S}_{j-1}A^k(V)\d_kZ_{j}+ LZ_j=R^1_j\with 
 R_j^1\triangleq\sum_{k=1}^d\dot{S}_{j-1}A^k(V)\d_kZ_{j}-\ddj(A^k(V)\d_kZ).$$
For $j\geq J_1$, taking the scalar product in $L^2(\R^d;\R^n)$ with 
$Z_j,$ then integrating by parts and using \eqref{partdissip2}
leads to:
\begin{equation}
\frac12\frac{d}{dt}\norme{Z_j}^2_{L^2}+\kappa_0\norme{Z_{2,j}}^2_{L^2}\lesssim
\norme{\nabla Z}_{L^\infty}\norme{Z_j}^2_{L^2}+\norme{R^1_j}_{L^2}\norme{Z_j}_{L^2}.\label{L2estimateHF}
\end{equation}

For the remainder term $R^1_j$, using Lemma \ref{CP} with 
$w=\wt A^k(Z),$ $z=\d_kZ,$
$k=0$, $\sigma_1=\frac{d}{p}+2$ and $\sigma_2=\frac{d}{p}+1$, 
we have (here we use our assumptions on $p$):
$$\displaylines{
\norme{R^1_j}_{L^2}\leq C c_j2^{-j(\frac{d}{2}+1)}
\sum_{k=1}^d
\Bigl(\norme{\nabla \wt A^k(Z)}_{\dot{\mathbb{B}}^{\frac dp}_{p,1}}\norme{Z}^h_{\dot{\mathbb{B}}^{\frac{d}{2}+1}_{2,1}}
+ \norme{\nabla Z}^\ell_{\dot{\mathbb{B}}^{\frac{d}{p}-\frac{d}{p*}}_{p,1}}\norme{\wt A^k(Z)}^\ell_{\dot{\mathbb{B}}^{\frac{d}{p}+2}_{p,1}}\hfill\cr\hfill+\norme{\nabla Z}_{\dot{\mathbb{B}}^{\frac dp}_{p,1}}\norme{\wt A^k(Z)}^h_{\dot{\mathbb{B}}^{\frac{d}{2}+1}_{2,1}}
+ \norme{\nabla Z}^\ell_{\dot{\mathbb{B}}^{\frac{d}{p}+1}_{p,1}}\norme{ \nabla\wt A^k(Z)}^\ell_{\dot{\mathbb{B}}^{\frac{d}{p}-\frac{d}{p*}}_{p,1}}\Bigr),}$$
whence, observing that $p^*\geq d$ and thus by \eqref{eq:comparaison},
\begin{equation}\label{eq:compbis}
\|\nabla Z\|^\ell_{\dot\B^{\frac dp-\frac d{p^*}}_{p,1}}
\lesssim \|Z\|^\ell_{\dot\B^{\frac dp}_{p,1}},\end{equation}
then using the linearity of $\wt A^k,$ we get
\begin{equation}\label{eq:R1}
\norme{R^1_j}_{L^2}\leq Cc_j2^{-j(\frac{d}{2}+1)}
\Bigl(\norme{Z}_{\dot{\mathbb{B}}^{\frac{d}{p}+1}_{p,1}}\norme{ Z}^h_{\dot{\mathbb{B}}^{\frac{d}{2}+1}_{2,1}}
+ \norme{Z}^\ell_{\dot{\mathbb{B}}^{\frac{d}{p}}_{p,1}}\norme{ Z}^\ell_{\dot{\mathbb{B}}^{\frac{d}{p}+2}_{p,1}}\Bigr)\cdotp
\end{equation}


\subsubsection*{Step 2: Cross estimates.}

To recover  the dissipation for all  the components of $Z$, we have to look 
at the time derivative of $\cI_j$ defined in \eqref{def:Iq}. 

To start with, let us rewrite \eqref{GE} as  follows: 
\begin{equation} 
\d_t Z+\sum_{k=1}^d\bar A^k\d_kZ+LZ= 
-\sum_{k=1}^d\wt A^k(Z)\d_kZ.
\end{equation} 
Hence, localizing the above equation yields
\begin{equation} \label{controlegradient}
\d_t Z_j+\sum_{k=1}^d\bar A^k\d_kZ_j+LZ_j= G_j\triangleq
-\sum_{k=1}^d\wt A^k(Z)\d_kZ_j+\sum_{k=1}^d[\wt A_k(Z),\ddj]\d_kZ.
\end{equation} 
Following the computations  we did in \cite{CBD2} leads for a suitable choice of 
 $\varepsilon_1,\cdots,\varepsilon_{n-1}$ to
\begin{equation}\label{eq:Iqbis}\frac{d}{dt}\mathcal{I}_j
+\frac{2^j}2 \sum_{q=1}^{n-1}\varepsilon_q \int_{\R^d} |LM_\omega^q\wh Z_j|^2\,d\xi
\leq \frac {2^{-j}\kappa_0}2 \|LZ_j\|_{L^2}^2 +  C\|\ddj G\|_{L^2}\|Z_j\|_{L^2}.\end{equation}


We have 
$$2^{j\frac d2}\|G_j\|_{L^2} \lesssim 
\|\wt A^k(Z)\|_{L^\infty}\bigl(2^{j\frac d2}\|\nabla Z_j\|_{L^2}\bigr)
+\sum_{k=1}^d 2^{j\frac d2}\|[\wt A_k(Z),\ddj]\d_kZ\|_{L^2}.
$$
Hence, using the embedding $\dot\B^{\frac d2}_{2,1}\hookrightarrow L^\infty,$
 applying Lemma \ref{CP} with $k=0$, $\sigma_1=d/p+2$ and $\sigma_2=\frac{d}{p}+1$ and 
 remembering that all the $\wt A^k$'s are linear, we get
$$\displaylines{\sum_{j\geq J_1}2^{j\frac d2}\|G_j\|_{L^2} \lesssim \bigl\|Z\|_{\dot{\mathbb{B}}^{\frac dp}_{p,1}}\|\nabla Z\|_{\dot{\mathbb{B}}^{\frac d2}_{2,1}}^h+
\norme{\nabla Z}_{\dot{\mathbb{B}}^{\frac dp}_{p,1}}\norme{Z}^h_{\dot{\mathbb{B}}^{\frac{d}{2}}_{2,1}}\hfill\cr\hfill
+ \norme{Z}^\ell_{\dot{\mathbb{B}}^{\frac{d}{p}-\frac{d}{p*}}_{p,1}}\norme{\nabla Z}^\ell_{\dot{\mathbb{B}}^{\frac{d}{p}+1}_{p,1}}
+ \norme{Z}^\ell_{\dot{\mathbb{B}}^{\frac{d}{p}+2}_{p,1}}\norme{ \nabla  Z}^\ell_{\dot{\mathbb{B}}^{\frac{d}{p}-\frac{d}{p*}}_{p,1}},}
$$
whence,   owing to \eqref{eq:compbis},
\begin{equation}\label{eq:G1} \norme{G}^h_{\dot{\mathbb{B}}^{\frac d2}_{2,1}}\lesssim\bigl\| Z\|_{\dot{\mathbb{B}}^{\frac d2+1}_{2,1}}^h(\|Z\|^\ell_{\dot{\mathbb{B}}^{\frac dp}_{p,1}}+\|Z\|^h_{\dot{\mathbb{B}}^{\frac dp+1}_{p,1}})+\norme{Z}^\ell_{\dot{\mathbb{B}}^{\frac{d}{p}}_{p,1}}\norme{Z}^\ell_{\dot{\mathbb{B}}^{\frac{d}{p}+2}_{p,1}}.
\end{equation}

Remember that since the (SK) condition is satisfied, the quantity $\cN_{\bar V}$
defined in \eqref{eq:Nomega}  is positive for any choice 
of positive parameters $\e_0,\cdots,\e_{n-1}.$ 
Consequently, if  we set 
$$\cH_j:= \frac{\kappa_0}2\|LZ_{j}\|^2 + \eta\sum_{q=1}^{n-1}\varepsilon_q \int_{\R^d} |LM_\omega^q \wh Z_j|^2\,d\xi$$
and use Fourier-Plancherel theorem and that $\mathcal{L}_j\simeq \|Z_j\|_{L^2}$, we see that 
(up to a change of  $\kappa_0$ and choosing $\eta$ small enough 
to kill the first term of the right-hand side of \eqref{eq:Iqbis}), we have  for all $j\geq J_1,$ 
\begin{equation}\label{eq:cHq}\cH_j\geq \kappa_0\cL_j.\end{equation}

Combining Inequalities  \eqref{L2estimateHF} and
\eqref{eq:R1}, the cross estimate \eqref{eq:Iqbis}, \eqref{eq:G1} and \eqref{eq:cHq}, we get
 $$\frac d{dt}2^{j(\frac d2+1)}\cL_j + \kappa_0 2^{j(\frac d2+1)}\cL_j
 \leq C c_j\Bigl(\norme{(\nabla Z,Z)}_{\dot{\mathbb{B}}^{\frac{d}{p}}_{p,1}}\norme{Z}^h_{\dot{\mathbb{B}}^{\frac{d}{2}+1}_{2,1}}\norme{Z_j}_{L^2}+\norme{Z}_{\dot{\mathbb{B}}^{\frac{d}{p}}_{p,1}}\norme{ Z}^\ell_{\dot{\mathbb{B}}^{\frac{d}{p}+2}_{p,1}}\norme{Z_j}_{L^2}\Bigr)\cdotp$$
Hence, using that $\mathcal{L}_j\simeq \|Z_j\|_{L^2}$ and Lemma \ref{SimpliCarre},   and summing up on 
 $j\geq J_1$ yields
 \begin{multline}\label{Zh}\|Z(t)\|^h_{\dot\cB^{\frac d2+1}_{2,1}}
  +\kappa_0\int_0^t  \|Z\|^h_{\dot\cB^{\frac d2+1}_{2,1}}\\\leq \|Z_0\|^h_{\dot\cB^{\frac d2+1}_{2,1}}+
C\int_0^t\Bigl((\|Z\|_{\dot\B^{\frac d2+1}_{2,1}}^h\!+\!  \|Z\|_{\dot\B^{\frac dp}_{p,1}}^\ell)\|Z\|_{\dot\B^{\frac d2+1}_{2,1}}^h
+\norme{Z}_{\dot{\mathbb{B}}^{\frac{d}{p}}_{p,1}}\norme{ Z}^\ell_{\dot{\mathbb{B}}^{\frac{d}{p}+2}_{p,1}}\Bigr)
\end{multline}
where we used the notation
$$\|Z\|^h_{\dot\cB^{\sigma}_{2,1}}\triangleq   \sum_{j\geq J_1} 2^{j\sigma} \sqrt{\cL_j}.$$
As $\|Z\|^h_{\dot\cB^{\sigma}_{2,1}}\simeq \|Z\|^h_{\dot\B^{\sigma}_{2,1}},$
this completes the proof of the proposition.
  \end{proof}


  \subsubsection{The final a priori estimate}
  
  As a first, observe that \eqref{eq:WZ2} implies that
  $$
  \|W-Z_2\|_{\dot\B^{\frac dp}_{p,1}}^h
\lesssim \|\nabla Z\|^h_{\dot\B^{\frac dp}_{p,1}}
+\sum_{m=1}^2\bigl(\|\wt A^k_{2,m}(Z)\d_kZ_m^\ell\|_{\dot\B^{\frac dp+1}_{p,1}}
+\|\wt A^k_{2,1}(Z)\d_kZ_m^h\|_{\dot\B^{\frac dp}_{p,1}}\bigr)\cdotp
$$  
  Hence, using product laws in Besov spaces (that is Proposition \ref{LP}), embedding, the smallness
  condition \eqref{eq:smallZ} 
  and \eqref{eq:bernstein}, we get
  \begin{align}
  \|W-Z_2\|_{\dot\B^{\frac dp}_{p,1}}^h &\lesssim \|\nabla Z\|^h_{\dot\B^{\frac dp}_{p,1}}
+\|Z\|_{\dot\B^{\frac dp}_{p,1}}\|Z\|_{\dot\B^{\frac dp+2}_{p,1}}^\ell
+\|Z\|_{\dot\B^{\frac dp+1}_{p,1}}^2
+\|Z\|_{\dot\B^{\frac dp}_{p,1}}\|\nabla Z\|_{\dot\B^{\frac dp}_{p,1}}^h\nonumber\\\label{eq:WZ2H}
&\lesssim \|\nabla Z\|^h_{\dot\B^{\frac d2}_{2,1}}
+\|Z\|_{\dot\B^{\frac dp}_{p,1}}\|Z\|_{\dot\B^{\frac dp+2}_{p,1}}^\ell
+\|Z\|_{\dot\B^{\frac dp+1}_{p,1}}^2.\end{align}
   Let us introduce the functionals   
\begin{equation}\label{def:cL}
\cL\triangleq \|(Z,W)\|^\ell_{\dot{\mathbb{B}}^{\frac dp}_{p,1}}
  + \|Z\|^h_{\dot\cB^{\frac d2+1}_{2,1}}
  \andf \cH\triangleq   \|Z\|^h_{\dot\B^{\frac d2+1}_{2,1}} +\|Z_1\|^\ell_{\dot\B^{\frac dp+2}_{p,1}}+\|Z_2\|^\ell_{\dot\B^{\frac dp+1}_{p,1}}+\|W\|_{\dot\B^{\frac dp}_{p,1}}.
    \end{equation}
   As seen in \eqref{eq:WZ2l8b}, $\cL$  is  equivalent to 
   $ \|Z\|^\ell_{\dot\B^{\frac dp}_{p,1}}
  + \|Z\|^h_{\dot\B^{\frac d2+1}_{2,1}}$.

  Adding up the inequality 
  of Proposition \ref{APLP} with Inequalities \eqref{Zh} and \eqref{eq:WZ2H},
   remembering \eqref{eq:smallZ} and using
   several times the fact that
   $$
   \|Z\|_{\dot\B^{\frac dp}_{p,1}}+
    \|Z\|_{\dot\B^{\frac dp+1}_{p,1}}  \lesssim
     \|Z\|^\ell_{\dot\B^{\frac dp}_{p,1}}
  + \|Z\|^h_{\dot\B^{\frac d2+1}_{2,1}},$$
    we get   for all $t\in[0,T],$ 
$$\displaylines{\cL(t) + \int_0^t \cH\lesssim \|Z_0\|^h_{\dot\B^{\frac d2+1}_{2,1}}+\|(W_0,Z_0)\|^\ell_{\dot\B^{\frac dp}_{p,1}}+\int_0^t(\|Z\|_{\dot\B^{\frac d2+1}_{2,1}}^h\!+\!  \|Z\|_{\dot\B^{\frac dp}_{p,1}}^\ell)\|Z\|_{\dot\B^{\frac d2+1}_{2,1}}^h
\hfill\cr\hfill +\int_0^t\norme{Z}_{\dot{\mathbb{B}}^{\frac{d}{p}+1}_{p,1}}^2
+\int_0^t\norme{Z}_{\dot{\mathbb{B}}^{\frac{d}{p}}_{p,1}}\norme{ Z_2}_{\dot{\mathbb{B}}^{\frac{d}{p}+1}_{p,1}}+\int_0^t\|Z_2\|_{\dot\B^{\frac dp}_{p,1}}\|Z\|_{\dot\B^{\frac dp+1}_{p,1}}
 +\int_0^t\|Z\|_{\dot\B^{\frac dp}_{p,1}\cap\dot\B^{\frac dp+1}_{p,1}}\|W\|_{\dot\B^{\frac dp}_{p,1}}.}$$
Owing to the definition of $\cL,$ an obvious embedding and \eqref{eq:smallZ},
the above inequality may be simplified into: 
\begin{multline}\label{eq:cLt}\cL(t) + \int_0^t \cH\lesssim \|Z_0\|^h_{\dot\B^{\frac d2+1}_{2,1}}+\|(W_0,Z_0)\|^\ell_{\dot\B^{\frac dp}_{p,1}}+\int_0^t\bigl(\|W\|_{\dot\B^{\frac dp}_{p,1}}+\|Z\|_{\dot\B^{\frac d2+1}_{2,1}}^h\bigr)\cL
\\+\int_0^t(\|Z\|^\ell_{\dot\B^{\frac dp+1}_{p,1}})^2
 +\int_0^t\|Z_2\|_{\dot\B^{\frac dp}_{p,1}}^\ell\|Z\|_{\dot\B^{\frac dp+1}_{p,1}}.\end{multline}
On the one hand,  by interpolation and \eqref{eq:comparaison}, we have
$$
(\|Z\|^\ell_{\dot\B^{\frac dp+1}_{p,1}})^2\leq
\|Z\|^\ell_{\dot\B^{\frac dp}_{p,1}}
\|Z\|^\ell_{\dot\B^{\frac dp+2}_{p,1}}\lesssim
\|Z\|_{\dot\B^{\frac dp}_{p,1}}
\bigl(\|Z_1\|^\ell_{\dot\B^{\frac dp+2}_{p,1}}+\|Z_2\|^\ell_{\dot\B^{\frac dp+1}_{p,1}}\bigr)
\lesssim
\|Z\|_{\dot\B^{\frac dp}_{p,1}}\cH,
$$
and the time integral of this term may thus be absorbed by the left-hand side of \eqref{eq:cLt}. 
\smallbreak
On the other hand, \eqref{eq:WZ2l8b} guarantees that
$$
\|Z_2\|^\ell_{\dot\B^{\frac dp}_{p,1}}\|Z\|_{\dot\B^{\frac dp+1}_{p,1}}
\lesssim \|W\|^\ell_{\dot\B^{\frac dp}_{p,1}}\|Z\|_{\dot\B^{\frac dp+1}_{p,1}}
+\bigl(\|Z\|_{\dot\B^{\frac dp+1}_{p,1}}^\ell\bigr)^2
+\bigl(\|Z\|_{\dot\B^{\frac dp+1}_{p,1}}^h\bigr)^2
+\|Z\|_{\dot\B^{\frac dp}_{p,1}}\|Z\|^h_{\dot\B^{\frac d2+1}_{2,1}}\|Z\|_{\dot\B^{\frac dp+1}_{p,1}}.
$$
The second term of the right-hand side may be handled as above, and we thus end up owing to \eqref{eq:smallZ}
with 
$$\|Z_2\|^\ell_{\dot\B^{\frac dp}_{p,1}}\|Z\|_{\dot\B^{\frac dp+1}_{p,1}}
\lesssim \bigl( \|W\|^\ell_{\dot\B^{\frac dp}_{p,1}}+\|Z\|^h_{\dot\B^{\frac d2+1}_{2,1}}\bigr)\cL
+\|Z\|_{\dot\B^{\frac dp}_{p,1}}\cH.$$
Hence, reverting to \eqref{eq:cLt}, we conclude that 
$$\cL(t) + \int_0^t \cH\leq C\biggl(\|Z_0\|^h_{\dot\B^{\frac d2+1}_{2,1}}+\|(W_0,Z_0)\|^\ell_{\dot\B^{\frac dp}_{p,1}}+
\int_0^t \cH\cL\biggr)\cdotp$$
It is now clear that if $\|Z_0\|^h_{\dot\B^{\frac d2+1}_{2,1}}+\|(W_0,Z_0)\|^\ell_{\dot\B^{\frac dp}_{p,1}}$ or, 
equivalently, $\|Z_0\|^h_{\dot\B^{\frac d2+1}_{2,1}}+\|Z_0\|^\ell_{\dot\B^{\frac dp}_{p,1}}$ is 
small enough, then we have 
\begin{equation}\label{eq:cL}\cL(t) + \int_0^t \cH\leq C\bigl(
\|Z_0\|^h_{\dot\B^{\frac d2+1}_{2,1}}+\|Z_0\|^\ell_{\dot\B^{\frac dp}_{p,1}}\bigr)\quad\hbox{for all }\ t\in[0,T].\end{equation}
In order to complete the proof of the estimate in Theorem \ref{ThmExistLp}, it suffices to observe that, in light 
of \eqref{eq:WZ2l8b}, one can recover a $L^2$-in-time control of $Z_2$
as follows:
$$\begin{aligned}\|Z_2\|_{L^2_T(\dot\B^{\frac{d}{p}}_{2,1})}^\ell
&\lesssim \|W\|_{L^2_T(\dot\B^{\frac{d}{p}}_{p,1})}^\ell+\|\nabla  Z\|_{L^2_T(\dot\B^{\frac dp}_{p,1})}^\ell 
+ \|Z\|_{L^\infty_T(\dot\B^{\frac dp}_{p,1})}\|\nabla Z\|^h_{L^2_T(\dot\B^{\frac d2}_{2,1})}\\
&\lesssim \bigl(\|W\|_{L^1_T(\dot\B^{\frac{d}{p}}_{p,1})}^\ell
\|W\|_{L^\infty_T(\dot\B^{\frac{d}{p}}_{p,1})}^\ell\bigr)^{1/2}
+\bigl(\|Z_1\|_{L^\infty_T(\dot\B^{\frac dp}_{p,1})}^\ell 
\|Z_1\|_{L^1_T(\dot\B^{\frac dp+2}_{p,1})}^\ell\bigr)^{1/2} 
\\&\hspace{6cm}+ \|Z\|_{L^\infty_T(\dot\B^{\frac dp}_{p,1})}
\bigl(\|\nabla Z\|^h_{L^\infty_T(\dot\B^{\frac d2}_{2,1})}
\|\nabla Z\|^h_{L^1_T(\dot\B^{\frac d2}_{2,1})}\bigr)^{1/2}.
\end{aligned}$$
Hence we have 
\begin{equation}\label{eq:Z2}\|Z_2\|_{L^2_T(\dot\B^{\frac{d}{p}}_{2,1})}
\leq C\bigl(
\|Z_0\|^h_{\dot\B^{\frac d2+1}_{2,1}}+\|Z_0\|^\ell_{\dot\B^{\frac dp}_{p,1}}\bigr)\cdotp
\end{equation}

\subsection{Proof of the existence part of Theorem \ref{ThmExistLp}}

Proving the existence of a global solution under 
the hypotheses of Theorem \ref{ThmExistLp} 
is an adaptation of \cite{CBD1} to the multi-dimensional case. 
First,  we multiply the  low frequencies of the data by a cut-off function  in order to have a converging sequence $(Z^n_0)_{n\in\N}$ of 
approximate data in the \emph{nonhomogeneous} Besov space
$\B^{\frac{d}{2}+1}_{2,1}.$ 
This enables us to take advantage of a classical existence statement 
(recalled in Appendix) to  construct
a sequence $(Z^n)_{n\in\N}$ of solutions to \eqref{GE}
with $\varepsilon=1$ and  initial data $(Z^n_0)_{n\in\N}.$
Then, from  the a priori estimates of the previous subsection,
embedding and a continuation criterion, we gather  that the approximate solutions are actually  global and  that $(Z_n)_{n\in\N}$ is bounded  in the space $E_p$
(that is $E_p^{J_\varepsilon}$ with $\varepsilon=1$).
At this stage, one may use compactness arguments in the spirit 
of Aubin-Lions lemma so as to prove convergence, up to subsequence
to a global solution of \eqref{GE}  supplemented with initial data $Z_0,$ 
with the desired properties.

\subsubsection*{First step: Construction of approximate solutions}

Let $Z_0$ be such that $Z_0^\ell\in\dot{\mathbb{B}}^{\frac{d}{p}}_{p,1}$ and $Z_0^h\in\dot{\mathbb{B}}^{\frac{d}{2}+1}_{2,1}$. 
Since $Z_0$ need not be in $\mathbb{B}^{\frac{d}{2}+1}_{2,1},$  we set for all $n\geq1,$
$$Z_{0}^n\triangleq   \chi_n\, \dot S_{J_1-5}Z_0+({\rm Id}-\dot S_{J_1-5})Z_0\with \chi_n\triangleq \chi(n^{-1}\cdot),$$
where $\chi$ stands  (for instance) for a  smooth function  with range in $[0,1],$ supported in  $]-4/3,4/3[$ and
such that $\chi\equiv1$ on $[-3/4,3/4].$ 
\smallbreak
 It is obvious that $Z_{0}^n$
 tends  to $Z_0$  in the sense of distributions, when $n$ tends to infinity.
Moreover, as $Z_0^\ell$ is in $\dot\B^{\frac dp}_{p,1},$  the  low frequencies of the data
are in $L^\infty,$ and the spatial truncation thus guarantees that  $Z_{0}^n\in {\mathbb{B}}^{\frac{d}{2}+1}_{2,1}.$ 
Furthermore, an easy adaptation of the proof of the one-dimensional case
in \cite{CBD1} reveals that  
\begin{equation}\label{eq:Zon}
\norme{Z_{0}^n}^\ell_{\dot{\mathbb{B}}^{\frac{d}{p}}_{p,1}}+ \norme{Z_{0}^n}^h_{\dot{\mathbb{B}}^{\frac{d}{2}+1}_{2,1}}\lesssim\norme{Z_0}^\ell_{\dot{\mathbb{B}}^{\frac{d}{p}}_{p,1}}+\norme{Z_0}^h_{\dot{\mathbb{B}}^{\frac{d}{2}+1}_{2,1}}.
\end{equation}

Now, applying Theorem \ref{ThmExistLocalLp}, we get a unique  maximal solution $Z^n$ in $\mathcal{C}([0,T_n[;\mathbb{B}^{\frac{d}{2}+1}_{2,1})\cap\mathcal{C}^1([0,T_n[;\mathbb{B}^{\frac{d}{2}}_{2,1})$ 
to System \eqref{GEQSYM}.

\subsubsection*{Second step: Uniform estimates} 

Since, for all $T>0,$ the space   $\mathcal{C}([0,T];\mathbb{B}^{\frac{d}{2}+1}_{2,1})\cap\mathcal{C}^1([0,T];\mathbb{B}^{\frac{d}{2}}_{2,1})$ is included in our `solution space' $E_p(T)$ (that is, $E_p$ restricted to $[0,T]$), 
one can take advantage of the computations for the previous sequence to bound our sequence.  In the end, denoting 
$$\displaylines{
X^n_p(t)\triangleq\norme{Z^n}^h_{L^\infty_t(\dot{\mathbb{B}}^{\frac{d}{2}+1}_{2,1})}
+\norme{Z^n}^\ell_{L^\infty_t(\dot{\mathbb{B}}^{\frac{d}{p}}_{p,1})}
+\norme{Z^n}^h_{L^1_t(\dot{\mathbb{B}}^{\frac{d}{2}+1}_{2,1})}
+\norme{Z_1^n}^\ell_{L^1_t(\dot{\mathbb{B}}^{\frac{d}{p}+2}_{p,1})}
\hfill\cr\hfill+\norme{Z_2^n}^\ell_{L^1_t(\dot{\mathbb{B}}^{\frac{d}{p}+1}_{p,1})}
+\norme{W^n}^\ell_{L^1_t(\dot{\mathbb{B}}^{\frac{d}{p}}_{p,1})}
+\norme{Z_2^n}^\ell_{L^2_t(\dot{\mathbb{B}}^{\frac{d}{p}}_{p,1})},}
$$
we get thanks to \eqref{eq:cL}, \eqref{eq:Z2} and  \eqref{eq:Zon}, 
\begin{equation}\label{Xp2}
 X^n_p(t)\leq  CX_{p,0}\quad\hbox{for all } \ t\in[0,T_n[.
\end{equation}
In order to show that the above inequality  implies that the solution is global (namely that $T_n=\infty$), one can argue by contradiction, assuming that $T_n<\infty,$ and use the blow-up criterion 
of Theorem \ref{ThmExistLocalLp}. However, we first have to justify that the nonhomogeneous Besov norm $\B^{\frac d2+1}_{2,1}$ of the solution is under control \emph{up to time $T_n.$}
Using the classical energy method for \eqref{GE} and the Gronwall lemma, we get that   for all $t<T_n,$ 
 $$\norme{Z^n(t)}_{L^2}\leq   C\norme{Z^n_0}_{L^2} \exp\biggl(C\int_0^t\norme{\nabla Z^n}_{L^\infty}\biggr)\cdotp$$ 
 Since \eqref{Xp2} and the embedding of $\dot\B^{\frac dp}_{p,1}$ and $\dot\B^{\frac d2}_{2,1}$ in $L^\infty$ ensure that $\nabla Z^n$ is in $L^1_{T_n}(L^\infty),$ we do have $Z^n\in L^\infty_{T_n}(L^2).$
 Combining with \eqref{Xp2} yields  $L^\infty_{T_n}({\mathbb{B}}^{\frac{d}{2}+1}_{2,1}).$ 

It is now easy to conclude : for all $t_{0,n}\in[0,T_n[,$  Theorem \ref{ThmExistLocalLp} provides us with  a solution of $(TM)$ with the initial data $Z(t_{0,n}$ 
 on $[t_{0,n},T+t_{0,n}]$ for some $T$ that may be bounded from below independently of   $t_{0,n}$. 
Consequently, choosing  $t_{0,n}$ such that $t_{0,n}>T_n-T$, we see that  the solution $Z^n$ can be extended beyond $T_n$, which 
contradicts the maximality of $T_n.$
Hence $T_n=\infty$ and the solution corresponding to the initial data $Z^n_0$ is global in time and satisfies $\eqref{Xp2}$ for all time.

\subsubsection*{Third step: Convergence} 

In order to show that $(Z^n)_{n\in\N}$ tends, up to subsequence, to some $Z\in E_p$, in the sense of distributions, that satisfies $\eqref{GE},$
one can use Ascoli Theorem and suitable compact embeddings in a similar fashion as in \cite{CBD1}. We omit the details here.

\subsection{Uniqueness}
 Since  our functional framework 
 is not the standard one for the low frequencies of the solution,  one cannot follow the classical  energy method like in e.g.  \cite{CBD2}. 
Here, for all $T>0,$  we shall estimate $\wt Z:=Z^1-Z^2$ in the space 
\begin{equation}\label{eq:FpT}
F_p(T)\triangleq \Bigl\{Z^\ell\in\cC([0,T];\dot\B^{\frac dp-\frac d{p*}}_{p,1})\,:\, 
Z^h\in \cC([0,T];\dot\B^{\frac d2}_{2,1})\Bigr\}
\cdotp\end{equation}
The reason for the exponent $d/2$ for high frequencies is the usual loss of one derivative when proving stability estimates for quasilinear hyperbolic systems. The exponent for low frequencies looks to be the best one 
for controlling  the nonlinearities.
To prove the uniqueness, we will need to following lemma.
\begin{Lemme} \label{lemma:uniq}
Let  $Z^1$ and $Z^2$ be two solutions of \eqref{GE} on $[0,T]$ supplemented by initial data $Z_{0}^1$ and $Z_{0}^2,$ respectively. Then,  $\wt Z\triangleq Z^1-Z^2$ satisfies the following 
a priori estimate for all $0\leq t\leq T$:
$$\displaylines{ \norme{\wt Z}^\ell_{L^\infty_t(\dot{\mathbb{B}}^{\frac{d}{p}-\frac{d}{p*}}_{p,1})}+\norme{\wt Z}_{L^\infty_t(\dot{\mathbb{B}}^{\frac{d}{2}}_{2,1})}^h\lesssim \norme{\wt Z_{0}}^\ell_{\dot{\mathbb{B}}^{\frac{d}{p}-\frac{d}{p*}}_{p,1}}+\norme{\wt Z_0}^h_{\dot{\mathbb{B}}^{\frac{d}{2}}_{2,1}}\hfill\cr\hfill+\int_0^t \Bigl(\|(Z^1,Z^2)\|^\ell_{\dot\B^{\frac dp}_{p,1}}
+\|(\nabla Z^1,\nabla Z^2)\|^h_{\dot{\mathbb{B}}^{\frac{d}{2}}_{2,1}}\Bigr)\bigl(\|{\wt Z}\bigr\|_{\dot{\mathbb{B}}^{\frac{d}{p}-\frac{d}{p*}}_{p,1}}^\ell+\|{\wt Z}\bigr\|_{\dot{\mathbb{B}}^{\frac{d}{2}}_{2,1}}^h\bigr)\cdotp}$$
\end{Lemme}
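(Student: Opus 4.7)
The plan is to derive an equation for $\wt Z\triangleq Z^1-Z^2$ and perform two separate localized energy estimates---an $L^p$-based one in low frequencies and an $L^2$-based one in high frequencies---then combine them via Gronwall. Setting $\wt A^k(Z)\triangleq A^k(\bar V+Z)-A^k(\bar V)$, the linearity assumption \eqref{AkLinear} gives $A^k(V^1)-A^k(V^2)=\wt A^k(\wt Z)$, hence
$$
\partial_t\wt Z+\sum_{k=1}^dA^k(V^1)\partial_k\wt Z+L\wt Z=-\sum_{k=1}^d\wt A^k(\wt Z)\partial_k Z^2.
$$
Applying $\ddj$ and rewriting the transport term as $A^k(V^1)\partial_k\ddj\wt Z+[\ddj,A^k(V^1)]\partial_k\wt Z$ puts the equation in a form amenable to the usual blockwise estimates.

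For the low-frequency part, I would take the $L^p$ scalar product of the localized equation with $|\ddj\wt Z|^{p-2}\ddj\wt Z$, integrate by parts in the transport term (using symmetry of $A^k(V^1)$) and exploit $L+{}^TL\ge0$ to obtain
$$
\tfrac{d}{dt}\|\ddj\wt Z\|_{L^p}\lesssim \|\nabla Z^1\|_{L^\infty}\|\ddj\wt Z\|_{L^p}+\|[\ddj,A^k(V^1)]\partial_k\wt Z\|_{L^p}+\|\ddj(\wt A^k(\wt Z)\partial_kZ^2)\|_{L^p}.
$$
Multiplying by $2^{j(d/p-d/p^*)}$ and summing over $j\le J_1+1$ produces the low-frequency norm on the left. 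The commutator is controlled via Lemma \ref{CP} and the bilinear source via Proposition \ref{LP}; the slightly off-critical index $d/p-d/p^*$ (rather than $d/p$) is exactly what is needed so that $\wt Z$ and $\nabla Z^2$, which split into low-frequency pieces in an $L^p$-space and high-frequency pieces in an $L^2$-space, combine through Sobolev embedding (e.g.\ $\dot\B^{d/2}_{2,1}\hookrightarrow\dot\B^{d/p^*}_{p^*,1}$) and H\"older to reproduce the factor $\|(Z^1,Z^2)\|^\ell_{\dot\B^{d/p}_{p,1}}+\|(\nabla Z^1,\nabla Z^2)\|^h_{\dot\B^{d/2}_{2,1}}$ in the right-hand side.

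For the high-frequency part, I would proceed as in Proposition \ref{EnergyHf}, building the Lyapunov functional $\cL_j$ of \eqref{eq:LqHF} for $\wt Z$ and computing its derivative, with the standard loss of one derivative typical for quasilinear symmetric hyperbolic stability estimates: this is why the space is $\dot\B^{d/2}_{2,1}$ rather than $\dot\B^{d/2+1}_{2,1}$. The cross-estimate on $\cI_j$ together with the (SK) condition restores dissipation for all components of $\wt Z$, and the commutator / source terms are treated exactly as in the a priori analysis using Lemma \ref{CP} and Proposition \ref{LP}. Summing the two bounds and applying Gronwall gives the claim, from which uniqueness follows by taking $Z_0^1=Z_0^2$. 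The main obstacle is the low-frequency product estimate: controlling $\wt A^k(\wt Z)\partial_k Z^2$ in $\dot\B^{d/p-d/p^*}_{p,1}$ when the two factors live in genuinely different functional frameworks at low and high frequencies; the choice of the exponent $d/p-d/p^*$ is made precisely to balance these mixed contributions so that the bilinear estimates close.
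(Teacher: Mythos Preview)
Your approach is essentially the same as the paper's, with one unnecessary complication: for the high-frequency part, the paper does \emph{not} build the Lyapunov functional $\cL_j$ with the cross term $\cI_j$. Since the lemma claims only an $L^\infty_t$ bound on $\|\wt Z\|^h_{\dot\B^{d/2}_{2,1}}$ and no dissipative $L^1_t$ term, there is no need to recover dissipation via the (SK) mechanism; the plain symmetric-hyperbolic $L^2$ energy estimate (after localizing by $\ddj$ and writing the transport term as $\dot S_{j-1}A^k(V^1)\partial_k\wt Z_j$ plus a commutator) suffices. Your Lyapunov construction would work but is overkill here.

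Two minor points: for the low-frequency commutator the paper uses the simpler estimate \eqref{eq:com1} (Proposition~\ref{C1}) rather than Lemma~\ref{CP}, which is tailored to the high-frequency $L^2$ analysis; and the lemma statement is the \emph{pre}-Gronwall inequality, so you should stop before applying Gronwall (that step belongs to the uniqueness argument that follows the lemma).
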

\begin{proof}  We have to proceed differently 
for estimating the low and the high frequencies of~$\wt Z.$

\smallbreak\noindent{\sl Step 1: Estimates for the low frequencies.} 
Let $V^1\triangleq \bar V+Z^1$ and $V^2\triangleq \bar V+Z^2.$ Observe that 
$\wt {Z}$ is a solution of
$$\displaylines{\quad
\d_t{\wt Z}+L\wt Z+\sum_{k=1}^dA^k(V^1)\d_k{\wt Z}=\sum_{k=1}^d\bigl(A^k(V^2)-A^k(V^1)\bigr)\d_kZ^2.}$$ 
Applying $\dot{\Delta}_j$, taking the scalar product with $|\wt Z_j|^{p-2}\wt Z_j$, integrating on $\mathbb{R}_+\times\mathbb{R}^d$ and using 
\eqref{partdissip2} and Lemma \ref{SimpliCarre}, we get for all $j\in\Z,$
$$\displaylines{\norme{\wt {Z}_j(t)}_{L^p}+\kappa_0\int_0^t \norme{L{\wt Z}_j}_{L^p}\leq\norme{\wt {Z}_{0,j}}_{L^p}+\sum_{k=1}^d\int_0^t \norme{\nabla A^k(V^1)}_{L^\infty}\norme{{\wt Z}_j}_{L^p} \hfill\cr\hfill
+ \int_0^t \biggl\|\dot{\Delta}_j\sum_{k=1}^d\bigl(A^k(V^2)-A^k(V^1)\bigr)\d_kZ^{2}\biggr\|_{L^p}+ \int_0^t
\sum_{k=1}^d\bigl\|[\ddj,A^k(V^1)]\d_k\wt Z\|_{L^p}.}
$$
Multiplying this inequality  
by $2^{j(\frac{d}{p}-\frac{d}{p*})}$ and using 
the embedding $\dot\B^{\frac dp}_{p,1}\hookrightarrow L^\infty$
as well as the commutator estimate \eqref{eq:com1}, we get
\begin{multline} \label{eq:36} 2^{j(\frac{d}{p}-\frac{d}{p*})}\norme{{\wt Z}_j(t)}_{L^p}\leq 2^{j(\frac{d}{p}-\frac{d}{p*})}\norme{{\wt Z}_{0,j}}_{L^p}+Cc_j\int_0^t \|\nabla Z^1\|_{\dot{\mathbb{B}}^{\frac{d}{p}}_{p,1}}\bigr\|\wt Z\bigr\|_{\dot{\mathbb{B}}^{\frac{d}{p}-\frac{d}{p*}}_{p,1}} \\  +\int_0^t2^{j(\frac{d}{p}-\frac{d}{p*})} \biggl\|\dot{\Delta}_j\sum_{k=1}^d\left(A^k(V^2)-A^k(V^1)\right)\d_kZ^2\biggr\|_{L^p}\cdot\qquad\qquad\end{multline}
Therefore, summing up on $j\leq J_1$, we arrive at
\begin{multline} \label{eq:37} \norme{\wt {Z}(t)}_{\dot{\mathbb{B}}^{\frac{d}{p}-\frac{d}{p*}}_{p,1}}^\ell\leq \norme{\wt{Z}_{0}}^\ell_{\dot{\mathbb{B}}^{\frac{d}{p}-\frac{d}{p*}}_{p,1}}
+C\int_0^t \|\nabla Z^1\|_{\dot{\mathbb{B}}^{\frac{d}{p}}_{p,1}}\bigr\|\wt Z\bigr\|_{\dot{\mathbb{B}}^{\frac{d}{p}-\frac{d}{p*}}_{p,1}} \\  
+C\int_0^t \biggl\|\sum_{k=1}^d\left(A^k(V^2)-A^k(V^1)\right)\d_kZ^2\biggr\|^\ell_{\dot{\mathbb{B}}^{\frac{d}{p}-\frac{d}{p*}}_{p,1}}\cdot\qquad\qquad\end{multline}

Since $A^k(V^2)-A^k(V^1)=\wt A^k(\wt Z),$ bounding 
the last term just follows from 
Inequality \eqref{eq:prod2} with $s=d/p-d/p^*,$ namely
\begin{equation}\label{eq:loi1}
\|a\,b\|_{\dot{\mathbb{B}}^{\frac{d}{p}-\frac{d}{p*}}_{p,1}}\lesssim 
\|a\|_{\dot\B^{\frac dp}_{p,1}}\,\|b\|_{\dot{\mathbb{B}}^{\frac{d}{p}-\frac{d}{p*}}_{p,1}}\end{equation}
We conclude that 
\begin{equation} \label{eq:uniqbf} \norme{\wt Z(t)}_{\dot{\mathbb{B}}^{\frac{d}{p}-\frac{d}{p*}}_{p,1}}^\ell\lesssim \norme{{\wt Z}_{0}}^\ell_{\dot{\mathbb{B}}^{\frac{d}{p}-\frac{d}{p*}}_{p,1}}+\int_0^t \|(\nabla Z^1,\nabla Z^2)\|_{\dot{\mathbb{B}}^{\frac{d}{p}}_{p,1}}\bigr\|\wt Z\bigr\|_{\dot{\mathbb{B}}^{\frac{d}{p}-\frac{d}{p*}}_{p,1}}.\end{equation}

\smallbreak\noindent{\sl Step 2: Estimates for the high frequencies.}
For all $j\in\Z,$ the function $\wt Z_j$ satisfies
$$
\d_t\wt Z_j+\sum_{k=1}^d\dot S_{j-1}A^k(V^1)\d_k\wt Z_j+L\wt Z_j
=\ddj\biggl(\sum_{k=1}^d \bigl(A^k(V^2)-A^k(V^1)\bigr)\d_kZ^2\biggr)+
\sum_{k=1}^d R_k $$
with $$\displaystyle R_k\triangleq\dot S_{j-1}
(\wt A^k(Z^1))\,\ddj \d_k\wt Z-\dot{\Delta}_j\bigl(\wt A^k(Z^1)\d_k \wt Z\bigr)\cdotp$$
Hence, performing the classical procedure, we end up with
\begin{eqnarray*}  \norme{\wt Z(t)}_{\dot{\mathbb{B}}^{\frac{d}{2}}_{2,1}}^h&\lesssim& \norme{{\wt Z}_0}^h_{\dot{\mathbb{B}}^{\frac{d}{2}}_{2,1}}+\int_0^t \|\nabla Z^1\|_{L^\infty}\bigr\|{\wt Z}\bigr\|_{\dot{\mathbb{B}}^{\frac{d}{2}}_{2,1}}^h \\&&  +\int_0^t \biggl\|\sum_{k=1}^d\left(A^k(V^2)-A^k(V^1)\right)\d_kZ^2\biggr\|^h_{\dot{\mathbb{B}}^{\frac{d}{2}}_{2,1}}+\int_0^t\sum_{j\geq J_1}2^{j\frac{d}{2}}\bigl\|\sum_{k=1}^dR_k\|_{L^2}.\end{eqnarray*}
Applying Lemma  \ref{CP} to $w=\wt A^k(Z^1)$
and $z=\d_k\wt Z$
with $s=\frac{d}{2}$, $k=1$, $\sigma_1=\frac{d}{p}+2$ and $\sigma_2=\frac{d}{p}+1,$ and 
remembering that all the maps $V\mapsto A^k(V)$ are linear, 
we get
\begin{eqnarray*}
\sum_{j\geq J_1}\biggl(2^{j\frac{d}{2}}\bigl\|\sum_{k=1}^dR_k\|_{L^2}\biggr)&\lesssim& \norme{\nabla Z^1}_{\dot{\mathbb{B}}^{\frac dp}_{p,1}}\norme{\nabla \wt Z}^h_{\dot{\mathbb{B}}^{\frac{d}{2}-1}_{2,1}}
+ \norme{\nabla\wt  Z}_{\dot{\mathbb{B}}^{\frac{d}{p}-\frac{d}{p*}}_{p,1}}^\ell\norme{Z^1}^\ell_{\dot{\mathbb{B}}^{\frac{d}{p}+2}_{p,1}}\\&&\quad\quad\quad+\norme{\nabla \wt Z}_{\dot{\mathbb{B}}^{\frac dp-1}_{p,1}}\norme{Z^1}^h_{\dot{\mathbb{B}}^{\frac{d}{2}+1}_{2,1}}
+ \norme{\nabla \wt Z}^\ell_{\dot{\mathbb{B}}^{\frac{d}{p}+1}_{p,1}}\norme{ \nabla  Z^1}^\ell_{\dot{\mathbb{B}}^{\frac{d}{p}-\frac{d}{p*}}_{p,1}}.
\end{eqnarray*}
Using \eqref{eq:prod4} with $a=\d_kZ^2,$ $b=A^k(V^2)-A^k(V^1),$
$s=\frac{d}{2}$ and $\sigma=\frac{d}{p}+1$ yields
\begin{eqnarray*}
&&\|\sum_{k=1}^d\left(A^k(V^2)-A^k(V^1)\right)\d_kZ^2\|_{\dot\B^{\frac d2}_{2,1}}^h\\&&\lesssim
\|\nabla Z^2\|_{\dot\B^{\frac dp}_{p,1}}\|\wt Z\|^h_{\dot\B^{\frac{d}{2}}_{2,1}}
+\|\wt Z\|_{\dot\B^{\frac dp}_{p,1}}\|\nabla Z^2\|^h_{\dot\B^{\frac{d}{2}}_{2,1}}+ \|\nabla Z^2\|^\ell_{\dot\B^{\frac dp-\frac{d}{p*}}_{p,1}}\|\wt Z\|^\ell_{\dot\B^{\frac{d}{p}+1}_{p,1}}
+ \|\wt Z\|^\ell_{\dot\B^{\frac dp-\frac{d}{p*}}_{p,1}}\|\nabla Z^2\|^\ell_{\dot\B^{\frac{d}{p}+1}_{p,1}}.
\end{eqnarray*}
Gathering the above estimates
and using once more the embedding
$\dot\B^{\frac dp}_{p,1}\hookrightarrow L^\infty,$
we obtain
\begin{eqnarray} \label{eq:uniqhf} \notag \norme{\wt Z(t)}_{\dot{\mathbb{B}}^{\frac{d}{2}}_{2,1}}^h&\lesssim& \norme{{\wt Z}_0}^h_{\dot{\mathbb{B}}^{\frac{d}{2}}_{2,1}}+\int_0^t \|(\nabla Z^1,\nabla Z^2)\|_{\dot{\mathbb{B}}^{\frac{d}{p}}_{p,1}}\|{\wt Z}\bigr\|_{\dot{\mathbb{B}}^{\frac{d}{2}}_{2,1}}^h
+\int_0^t \|(\nabla Z^1,\nabla Z^2)\|^h_{\dot{\mathbb{B}}^{\frac{d}{2}}_{2,1}}\|{\wt Z}\bigr\|_{\dot{\mathbb{B}}^{\frac{d}{p}}_{p,1}}\\&&+ \int_0^t\| (Z^1,Z^2)\|^\ell_{\dot\B^{\frac dp}_{p,1}}\|\wt Z\|^\ell_{\dot\B^{\frac{d}{p}+1}_{p,1}}
+\int_0^t \|\wt Z\|^\ell_{\dot\B^{\frac dp-\frac{d}{p*}}_{p,1}}\|(Z^1, Z^2)\|^\ell_{\dot\B^{\frac{d}{p}+2}_{p,1}}.
\end{eqnarray}

\smallbreak\noindent{\sl Step 3: Conclusion.}
Summing \eqref{eq:uniqbf} and \eqref{eq:uniqhf} together, we get for all $t\geq0,$
$$\begin{aligned} \norme{\wt Z(t)}^\ell_{\dot{\mathbb{B}}^{\frac{d}{p}-\frac{d}{p*}}_{p,1}}
&+\norme{\wt Z(t)}_{\dot{\mathbb{B}}^{\frac{d}{2}}_{2,1}}^h\lesssim \norme{\wt Z_{0}}^\ell_{\dot{\mathbb{B}}^{\frac{d}{p}-\frac{d}{p*}}_{p,1}}+\norme{\wt Z_0}^h_{\dot{\mathbb{B}}^{\frac{d}{2}}_{2,1}}\\&+\int_0^t \|(\nabla Z^1,\nabla Z^2)\|_{\dot{\mathbb{B}}^{\frac{d}{p}}_{p,1}}(\|{\wt Z}\bigr\|_{\dot{\mathbb{B}}^{\frac{d}{p}-\frac{d}{p*}}_{p,1}}^\ell+\|{\wt Z}\bigr\|_{\dot{\mathbb{B}}^{\frac{d}{2}}_{2,1}}^h)
+\int_0^t \|(\nabla Z^1,\nabla Z^2)\|^h_{\dot{\mathbb{B}}^{\frac{d}{2}}_{2,1}}\|{\wt Z}\bigr\|_{\dot{\mathbb{B}}^{\frac{d}{p}}_{p,1}}\\&+ \int_0^t\| (Z^1,Z^2)\|^\ell_{\dot\B^{\frac dp}_{p,1}}\|\wt Z\|^\ell_{\dot\B^{\frac{d}{p}+1}_{p,1}}+\int_0^t \|\wt Z\|^\ell_{\dot\B^{\frac dp-\frac{d}{p*}}_{p,1}}\|(Z^1, Z^2)\|^\ell_{\dot\B^{\frac{d}{p}+2}_{p,1}},\end{aligned}$$
which, by virtue of    \eqref{eq:bernstein}  yields
the desired estimate of Lemma \ref{lemma:uniq}.\end{proof}

In order to prove the uniqueness part of Theorem \ref{ThmExistLp},
consider two solutions $Z^1$ and $Z^2$ of \eqref{GE} (not necessarily small)
 in  the space $E_p,$ that correspond  to the same initial data $Z_0$. 
 Then, the result follows from  Lemma \ref{lemma:uniq},
\emph{provided we prove that the difference between the two 
solutions belongs to $F_p(T)$ for all 
$T>0.$}


Just denoting by $Z$ one of those two solutions, we have \begin{equation}
  \d_tZ=-\sum_{k=1}^d A^k(V)\d_kZ-LZ\cdotp\end{equation}
By interpolation in Besov spaces and H\"older inequality with respect to the time variable, 
since $Z^\ell$ is in $L^\infty(\R^+;\dot\B^{\frac dp}_{p,1})\cap L^1(\R^+;\dot\B^{\frac dp+2}_{p,1}),$  we get
\begin{equation}\label{eq:Lr}\nabla Z^\ell\in L^r(\R^+;\dot\B^{\frac dp-\frac d{p*}}_{p,1})
\with \frac1r\triangleq\frac12-\frac d4+\frac d{2p}\cdotp\end{equation}
The same property holds for  $Z^h$  since it belongs  to $ L^1(\dot\B^{\frac dp+1}_{p,1})\cap L^\infty(\dot\B^{\frac dp+1}_{p,1}).$
We also know that $A^k(V)-\bar A^k$ is in  $L^\infty(\R^+;\dot\B^{\frac dp}_{p,1}).$ Therefore, from 
the product laws in Besov spaces that have been recalled in Proposition \ref{LP}, we
have that  $\partial_tZ_1$ is in  $L^r(\R^+;\dot\B^{\frac dp-\frac{d}{p*}}_{p,1}),$  and thus
\begin{equation}\label{eq:uniq1} Z_1-Z_{1,0}\in \cC^{\frac1{r'}}_{loc}(\R^+;\dot\B^{\frac dp -\frac{d}{p*}}_{p,1}).\end{equation}
We conclude that $Z_1-Z_{1,0}$ is in $F_p(T)$ for all finite $T.$

Owing to the $0$-th order term $LZ$ in the equation, 
in order to justify that $(Z_2-Z_{2,0})\in F_p(T),$ we have 
to proceed slightly differently. 
Now, we notice that 
$$\d_t(e^{tL_2}Z_2) = -e^{tL_2}\biggl(\sum_{k=1}^d
A^k_{2,1}(V)\d_kZ_1+A_{2,2}^k(V)\d_kZ_2\biggr)\cdotp$$
Arguing as above, we see that the right-hand side
is in  $L^r(\R^+;\dot\B^{\frac dp-\frac{d}{p*}}_{p,1}),$
which, as before, allows to conclude 
that $Z_2-Z_{2,0}\in \cC^{\frac1{r'}}_{loc}(\R^+;\dot\B^{\frac dp -\frac{d}{p*}}_{p,1}).$

Back to our two solutions $Z_1$ and $Z_2,$ since they coincide initially, the above arguments ensure that $Z_1-Z_2$ is in $F_p(T).$
Hence, combining Lemma \ref{lemma:uniq}, Gronwall lemma 
and the fact that the low frequencies of $Z^1$ and $Z^2$ 
(resp. the high frequencies of 
$\nabla Z^1$ and $\nabla Z^2$) are bounded in $L^1(0,T;\dot\B^{\frac dp}_{p,1})$ (resp. in $L^1(0,T;\dot\B^{\frac d2}_{2,1})$)
for all $T>0$ completes the proof of  uniqueness. 


\section{Relaxation limit for the compressible Euler system}\label{s:lim}
In this section we prove Theorem \ref{Thm-relax}. We shall often use  that, as a consequence of \eqref{eq:rescale2}, \eqref{eq:comparaison} and of the definition of $J_\varepsilon$, 
there exists $C>0$ such that  for all $s\in\R$ and $\varepsilon>0,$
\begin{equation}\label{eq:bernep}
\|f\|^{\ell,J_\varepsilon}_{\dot\B^{s}_{p,1}}\leq \frac{C}{\varepsilon}\|f\|^{\ell,J_\varepsilon}_{\dot\B^{s-1}_{p,1}} \andf  \|f\|^{h,J_\varepsilon}_{\dot \B^{s}_{2,1}}\leq C\varepsilon\|f\|^{h,J_\varepsilon}_{\dot\B^{s+1}_{2,1}}.\end{equation}

\subsection{Reformulation of the problem and derivation of 
the limit system}

Let $(c,v)$ be a  solution from Theorem \ref{ThmExistLpCED}.
As in \cite{CoulombelLin,CoulombelGoudon}, we 
perform the following `diffusive'  rescaling: 
$$(\tilde{c}^\varepsilon,\tilde{v}^\varepsilon)(\tau,x)
\triangleq(c,\frac{v}{\varepsilon})(t,x)
\with \tau=\varepsilon t.$$ 
The couple $(\widetilde{c}^\varepsilon,\widetilde{v}^\varepsilon)$ satisfies: 
\begin{equation} \displaystyle\left\{\displaystyle \begin{matrix}\partial_t\tilde{c}^\varepsilon+\tilde{v}^\varepsilon\cdot\nabla \tilde{c}^\varepsilon+\tilde{\gamma}\tilde{c}^\varepsilon\div \tilde{v}^\varepsilon=0,\\[1ex]\displaystyle \varepsilon^2\left(\partial_t\tilde{v}^\varepsilon+\tilde{v}^\varepsilon\cdot \nabla \tilde{v}^\varepsilon\right)+\check{\gamma}\wt c^\varepsilon\nabla \wt c^\varepsilon+\tilde{v}^\varepsilon
=0. \end{matrix} \right.
\label{CEDRelax0}\end{equation}
As a consequence of Theorem \ref{ThmExistLpCED} and of 
\eqref{eq:bernep}, we readily get the following 
uniform estimate\footnote{ The crucial bound on  $\norme{\widetilde{c}^\varepsilon-\bar{c}}_{L^2(\dot{\mathbb{B}}^{\frac{d}{p}+1}_{p,1})}$ can be easily deduced from the other bounds.}  which will be a key ingredient in our study of the relaxation limit:
\begin{eqnarray}&&\norme{\widetilde{c}^\varepsilon-\bar{c}}^{\ell,J_\varepsilon}_{L^\infty(\dot{\mathbb{B}}^{\frac{d}{p}}_{p,1})}+\varepsilon\norme{\wt v^\varepsilon}^{\ell,J_\varepsilon}_{L^\infty(\dot{\mathbb{B}}^{\frac{d}{p}}_{p,1})}+\varepsilon\norme{\widetilde{c}^\varepsilon-\bar{c}}^{h,J_\varepsilon}_{L^\infty(\dot{\mathbb{B}}^{\frac{d}{2}+1}_{2,1})}+\varepsilon^2\norme{\wt v^\varepsilon}^{h,J_\varepsilon}_{L^\infty(\dot{\mathbb{B}}^{\frac{d}{2}+1}_{2,1})}
+\norme{\widetilde{c}^\varepsilon-\bar{c}}^{\ell,J_\varepsilon}_{L^1(\dot{\mathbb{B}}^{\frac{d}{p}+2}_{p,1})}\nonumber\\&&+\frac{1}{\varepsilon}\norme{\widetilde{c}^\varepsilon-\bar{c}}^{h,J_\varepsilon}_{L^1(\dot{\mathbb{B}}^{\frac{d}{2}+1}_{2,1})}
+\norme{\wt v^\varepsilon}^{h,J_\varepsilon}_{L^1(\dot{\mathbb{B}}^{\frac{d}{2}+1}_{2,1})}+\norme{\wt v^\varepsilon}^{\ell,J_\varepsilon}_{L^1(\dot{\mathbb{B}}^{\frac{d}{p}+1}_{p,1})}+\norme{\widetilde{c}^\varepsilon-\bar{c}}_{L^2(\dot{\mathbb{B}}^{\frac{d}{p}+1}_{p,1})}+\norme{\wt v^\varepsilon}_{L^2(\dot{\mathbb{B}}^{\frac{d}{p}}_{p,1})}\nonumber\\&&\qquad\qquad\qquad\qquad\qquad\qquad\qquad\qquad\qquad\qquad\qquad\qquad\qquad\qquad+\frac{1}{\varepsilon}\norme{\wt W^\varepsilon}_{L^1(\dot{\mathbb{B}}^{\frac{d}{p}}_{p,1})}\leq Cc_0 \label{UniformTilde}
\end{eqnarray}
where $\wt W^\varepsilon=\check{\gamma}\wt c^\varepsilon\nabla \wt c^\varepsilon+\tilde v^\varepsilon$ and 
$J_\varepsilon=-\lfloor\log_2(\varepsilon)\rfloor+k_p$ for some
$k_p\in\Z.$
\bigbreak
Let us define  the  density $\wt\rho^\varepsilon$ and reference density 
$\bar\rho$ from  \eqref{eq:c}. Then, $(\wt\rho^\varepsilon,\wt v^\varepsilon)$ obeys the following system:
 \begin{equation} \displaystyle\left\{\displaystyle \begin{matrix}\partial_t\widetilde{\rho}^\varepsilon+\operatorname{div}\, (\widetilde{\rho}^\varepsilon\widetilde{v}^\varepsilon)=0,\\[1ex]\displaystyle \varepsilon^2\left(\partial_t\tilde{v}^\varepsilon+\tilde{v}^\varepsilon\cdot \nabla \tilde{v}^\varepsilon\right)+\frac{\nabla P(\wt\rho^\varepsilon)}{\wt\rho^\varepsilon}+\tilde{v}^\varepsilon
=0. \end{matrix} \right.
\label{CEDRelax}\end{equation}
Owing to \eqref{UniformTilde}, $\varepsilon \wt v^\varepsilon$ and $\nabla \wt v^\varepsilon$ are uniformly bounded in the spaces
$L^\infty(\R^+;\dot\B^{\frac dp}_{p,1})$ and $L^1(\R^+;\dot\B^{\frac dp}_{p,1}),$ respectively. This implies  that
$$\varepsilon^2 \wt v^\varepsilon\cdot\nabla \wt v^\varepsilon=\cO(\varepsilon)\quad\hbox{in }\ L^1(\R^+;\dot\B^{\frac dp}_{p,1}).$$
The uniform estimate \eqref{UniformTilde} also implies that $\varepsilon^2\d_t\wt v^\varepsilon$ tends to $0$ in the sense of distributions.
Plugging this information in the second equation of \eqref{CEDRelax}, one may conclude  that
  \begin{equation}\label{eq:weakW}\tilde{v}^\varepsilon+\frac{\nabla P(\wt\rho^\varepsilon)}{\widetilde{\rho}^\varepsilon}\rightharpoonup 0 \ \text{  in  }\  \mathcal{D}'(\mathbb{R}^+\times\mathbb{R}^d).
  \end{equation}
  Let us remember  that
\begin{equation}\label{eq:relation} 
\wt\rho^\varepsilon -\bar \rho= \biggl(\frac{\gamma-1}{\sqrt{4A\gamma}}\:\wt c^\varepsilon\biggr)^{\frac2{\gamma-1}}
- \biggl(\frac{\gamma-1}{\sqrt{4A\gamma}}\:\bar c\biggr)^{\frac2{\gamma-1}}\cdotp\end{equation}
{}From  \eqref{eq:bernep} and \eqref{UniformTilde},
it is easy to see that 
 $$\norme{\wt c^\varepsilon-\bar{c}}_{L^\infty(\dot{\mathbb{B}}^{\frac{d}{p}}_{p,1})}+\norme{\wt c^\varepsilon-\bar{c}}_{L^2(\dot{\mathbb{B}}^{\frac{d}{p}+1}_{p,1})}\leq c_0. $$
 Hence, using Proposition \ref{Composition} and  \eqref{eq:relation}  gives
  \begin{equation}\label{eq:rhowt}\norme{\wt\rho^\varepsilon-\bar{\rho}}_{L^\infty(\dot{\mathbb{B}}^{\frac{d}{p}}_{p,1})}+\norme{\wt\rho^\varepsilon-\bar{\rho}}_{L^2(\dot{\mathbb{B}}^{\frac{d}{p}+1}_{p,1})}\leq c_0. \end{equation}
In particular
$\widetilde{\rho}^\varepsilon-\bar{\rho}$ is uniformly bounded in $L^\infty(\R^+;\dot \B^{\frac dp}_{p,1}).$ Therefore, there exists $\cN$ in  $\bar\rho+L^\infty(\R^+;\dot \B^{\frac dp}_{p,1})$
such that, up to subsequence, 
 \begin{equation}\label{eq:weakn}\wt{\rho}^\varepsilon-\bar{\rho} \overset{\ast}{\rightharpoonup} \mathcal{N}-\bar{\rho}\ \text{ in  }\  L^\infty(\mathbb{R}^+;\dot{\mathbb{B}}^{\frac{d}{p}}_{p,1}).\end{equation}
Now, observing that 
\begin{equation}\label{eq:Wepsilon}
\wt\rho^\varepsilon \wt W^\varepsilon=\nabla P(\wt\rho^\varepsilon)+ {\widetilde{\rho}^\varepsilon}\tilde v^\varepsilon,
\end{equation} the first equation of \eqref{CEDRelax} may be rewritten 
\begin{equation}\label{eq:safd}
\partial_t\wt\rho^\varepsilon-\Delta P(\wt\rho^\varepsilon)=
\wt S^\varepsilon\with \wt S^\varepsilon=-\div(\widetilde{\rho}^\varepsilon \wt W^\varepsilon).
\end{equation}
Hence, combining \eqref{eq:weakW}, \eqref{eq:weakn} and \eqref{eq:Wepsilon}, 
it can be anticipated that $\cN$ satisfies
\begin{equation}\label{CauchyPbM}\partial_t\mathcal{N}-\Delta P(\mathcal{N}) =0.\end{equation}


\subsection{Proving the strong convergence to the limit system}

Having determined the limit system, we  are now going to use the uniform estimate
\eqref{UniformTilde} to prove the strong convergence of the density 
to a solution of \eqref{CauchyPbM}, with an explicit rate of convergence.
As a first, let us remember that \eqref{UniformTilde}
and \eqref{eq:rhowt} imply 
that\footnote{Unless $\gamma=3,$  
we do not know how to deduce specific information 
on the low (resp. high) frequencies 
of  $\rho-\bar{\rho}$ from that of  $c-\bar c.$
This is due to the \emph{nonlinear} relation
between these two functions. } 
\begin{equation}\label{eq:boundsrho}
\norme{\wt \rho^\varepsilon-\bar{\rho}}_{L^\infty(\dot{\mathbb{B}}^{\frac{d}{p}}_{p,1})}+\norme{\wt\rho^\varepsilon-\bar{\rho}}_{L^2(\dot{\mathbb{B}}^{\frac{d}{p}+1}_{p,1})}+\varepsilon^{-1}
\norme{\wt W^\varepsilon}_{L^1(\dot{\mathbb{B}}^{\frac{d}{p}}_{p,1})}\leq c_0. \end{equation}

Next, setting  $\bar{\mathcal{N}}=\bar\rho,$ 
Proposition  \ref{PropExistMAppendix} below guarantees 
that Equation \eqref{CauchyPbM} supplemented with any initial data
$\mathcal{N}_0$ such that $\mathcal{N}_0-\bar{\mathcal{N}}\in\dot\B^{\frac dp}_{p,1}$ is small enough 
admits  a unique global solution $\mathcal{N}$ such that  $\mathcal{N}-\bar{\mathcal{N}}\in \cC_b(\R^+;\dot \B^{\frac dp}_{p,1}) \cap L^1(\R^+;\dot\B^{\frac dp+2}_{p,1}).$
\medbreak
We can now prove Theorem \ref{Thm-relax}, assuming that  $\varepsilon>0$ is small, and that
\begin{equation}\label{eq:diff}\|\widetilde{\rho}_0^\varepsilon-\mathcal{N}_0\|_{\dot\B^{\frac{d}{p}-1}_{p,1}}\leq \varepsilon.\end{equation}

To derive the convergence rate we will estimate the difference of the  solutions to the following two equations:
\begin{equation}
\partial_t\mathcal{N}-\Delta P(\mathcal{N}) =0
\end{equation}
and
\begin{equation} \partial_t\tilde{\rho}^\varepsilon+\operatorname{div}\, (\widetilde{\rho}^\varepsilon\tilde{v}^\varepsilon)=0. \label{eq-n}\end{equation}
Recall that \eqref{eq-n} may be rewritten in terms of the damped mode $\widetilde{W}^\e$ as in \eqref{eq:safd}. 
Hence $\dD^{\varepsilon}\triangleq\widetilde{\rho}^{\varepsilon}-\mathcal{N}$  satisfies
$$\partial_t \dD^{\varepsilon}-\Delta(P(\widetilde\rho^\varepsilon)-P(\mathcal{N}))=\wt S^{\varepsilon}.$$
In light of Taylor formula, there exists a smooth function $H_1$ vanishing at $\bar\rho=\bar{\mathcal{N}}$ such that 
 $$ P(\widetilde\rho^\varepsilon)-P(\bar{\rho})= P'(\bar{\rho})\,(\widetilde\rho^\varepsilon-\bar{\rho})+ H_1(\widetilde\rho^\varepsilon)\,(\widetilde\rho^\varepsilon-\bar{\rho}).
 $$
 and $$P(\mathcal{N})-P(\bar{\mathcal{N}})= P'(\bar{\mathcal{N}})\,(\mathcal{N}-\bar{\mathcal{N}}) + H_1(\mathcal{N})\,(\mathcal{N}-\bar{\mathcal{N}}).$$
  Hence  we have
\begin{align*}\partial_t\dD^{\varepsilon}-P'(\bar\rho)\Delta\dD^\varepsilon=\Delta \left(\dD^\varepsilon\: H_1(\widetilde\rho^\varepsilon)\right)+\Delta \left((H_1(\widetilde\rho^\varepsilon)-H_1(\mathcal{N}))\mathcal{N}\right)+\wt S^{\varepsilon}.
\end{align*}
Then, using endpoint maximal regularity estimates for the heat equation
(see e.g. \cite{HJR}) yields
 \begin{multline}\label{finalrelax}\|\dD^{\varepsilon}\|_{L^\infty(\dot \B^{\frac{d}{p}-1}_{p,1})}+\|\dD^{\varepsilon}\|_{L^1(\dot \B^{\frac{d}{p}+1}_{p,1})}\lesssim \|\dD_0^{\varepsilon}\|_{\dot\B^{\frac{d}{p}-1}_{p,1}}+\|\wt S^{\varepsilon}\|_{L^1(\dot \B^{\frac{d}{p}-1}_{p,1})}\hfill\cr\hfill\|\dD^\varepsilon\: (H_1(\widetilde\rho^\varepsilon)-H_1(\bar{\rho}))\|_{L^1(\dot\B^{\frac{d}{p}+1}_{p,1})}+\|(H_1(\widetilde\rho^\varepsilon)-H_1(\mathcal{N}))(\mathcal{N}-\bar{\mathcal{N}})\|_{L^1(\dot\B^{\frac{d}{p}+1}_{p,1})}.
 \end{multline}
 Basic product laws  give us:
$$\begin{aligned}\|\wt S^{\varepsilon}\|_{L^1(\dot{\B}^{\frac{d}{p}-1}_{p,1})}\lesssim \|\wt{\rho}^\varepsilon \wt W^\varepsilon\|_{L^1(\dot{\B}^{\frac{d}{p}}_{p,1})}\lesssim
\|\wt W^{\varepsilon}\|_{L^1(\dot{\B}^{\frac{d}{p}}_{p,1})}\Bigl(\bar{\rho}+\|\wt \rho^{\varepsilon}-\bar\rho\|_{L^\infty(\dot{\B}^{\frac{d}{p}}_{p,1})}\Bigr)\cdotp
\end{aligned}$$
Hence, taking advantage of Inequality \eqref{eq:boundsrho},  we get
\begin{equation} \label{Est:Source}\|\wt S^{\varepsilon}\|_{L^1(\dot{\B}^{\frac{d}{p}-1}_{p,1})} \leq C\varepsilon. \end{equation}
Propositions \ref{LP} and \ref{Composition} give us
$$\begin{aligned}\|\dD^\varepsilon\: (H_1(\widetilde\rho^\varepsilon)-H_1(\bar{\rho}))\|_{L^1(\dot\B^{\frac{d}{p}+1}_{p,1})}&\lesssim \|\dD^\varepsilon\|_{L^1(\dot \B^{\frac{d}{p}+1}_{p,1})}\|(\widetilde\rho^\varepsilon-\bar{\rho},\mathcal{N}-\bar{\mathcal{N}})\|_{L^\infty(\dot \B^{\frac{d}{p}}_{p,1})},\\
\|(H_1(\widetilde\rho^\varepsilon)-H_1(\mathcal{N}))(\mathcal{N}-\bar{\mathcal{N}})\|_{L^1(\dot\B^{\frac{d}{p}+1}_{p,1})}
&\lesssim
\|\dD^\varepsilon\|_{L^2(\dot \B^{\frac{d}{p}}_{p,1})}\|(\widetilde\rho^\varepsilon-\bar\rho,\mathcal{N}-\bar{\mathcal{N}})\|_{L^2(\dot \B^{\frac{d}{p}+1}_{p,1})}.
\end{aligned}$$
Thanks to Inequality  \eqref{eq:boundsrho} and Proposition \ref{PropExistMAppendix}, we have $$\|(\wt{\rho}^\varepsilon-\bar\rho,\mathcal{N}-\bar{\mathcal{N}})\|_{L^\infty(\mathbb{B}^{\frac{d}{p}}_{p,1})}+\|(\wt{\rho}^\varepsilon-\bar\rho,\mathcal{N}-\bar{\mathcal{N}})\|_{L^2(\mathbb{B}^{\frac{d}{p}+1}_{p,1})} \leq c_0\ll1.$$ 
Hence, reverting to  \eqref{finalrelax} yields
$$\|\dD^{\varepsilon}\|_{L^\infty(\dot \B^{\frac{d}{p}-1}_{p,1})}+\|\dD^{\varepsilon}\|_{L^1(\dot \B^{\frac{d}{p}+1}_{p,1})}\lesssim 
\|\dD_0^\varepsilon\|_{\dot \B^{\frac{d}{p}-1}_{p,1}}+\varepsilon, $$
which concludes the proof of Theorem \ref{Thm-relax}.


\section{Appendix}

Here we gather a few technical results that have been used repeatedly in the paper. 
We often used the following  well known result  (see  e.g. \cite{CBD1} for the proof). 
\begin{Lemme}\label{SimpliCarre}
Let  $p\geq 1$ and $X : [0,T]\to \mathbb{R}^+$ be a continuous function such that $X^p$ is a.e. differentiable. We assume that there exist  a constant $b\geq 0$ and  a measurable function $A : [0,T]\to \mathbb{R}^+$ 
such that 
 $$\frac{1}{p}\frac{d}{dt}X^p+bX^p\leq AX^{p-1}\quad\hbox{a.e.  on }\ [0,T].$$ 
 Then, for all $t\in[0,T],$ we have
$$X(t)+b\int_0^tX\leq X_0+\int_0^tA.$$
\end{Lemme}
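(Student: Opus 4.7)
The plan is to reduce the inequality for $X^p$ to a first-order linear inequality for $X$ itself, which then integrates at once. Formally, if $X$ were strictly positive, one could divide the hypothesis through by $X^{p-1}$ and combine with $\frac{1}{p}\frac{d}{dt}X^p = X^{p-1}\frac{dX}{dt}$ to obtain $\frac{dX}{dt} + bX \leq A$ a.e.; integrating from $0$ to $t$ would then yield the claim. The only genuine obstacle, and essentially the whole content of the proof, is that $X$ is merely nonnegative, so this division is illegal wherever $X$ vanishes.

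I would bypass this with a standard regularization. For each $\varepsilon > 0$, set $Y_\varepsilon \triangleq (X^p + \varepsilon^p)^{1/p}$. Then $Y_\varepsilon$ is continuous with $Y_\varepsilon \geq \varepsilon > 0$, and $Y_\varepsilon^p$ differs from $X^p$ by a constant, hence is a.e. differentiable with the same derivative. Since $y \mapsto y^{1/p}$ is smooth on $(0,\infty)$ and $Y_\varepsilon$ stays away from $0$, the chain rule gives $\frac{d}{dt}Y_\varepsilon = \frac{1}{p Y_\varepsilon^{p-1}} \frac{d}{dt}X^p$ a.e. Plugging the hypothesis into this identity and using the two elementary comparisons $X^{p-1} \leq Y_\varepsilon^{p-1}$ and $X^p \geq Y_\varepsilon^p - \varepsilon^p$, followed by $\varepsilon^p/Y_\varepsilon^{p-1} \leq \varepsilon$, should produce
$$\frac{d}{dt}Y_\varepsilon + bY_\varepsilon \leq A + b\varepsilon \quad \hbox{a.e. on } [0,T].$$

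This is a standard linear differential inequality for the absolutely continuous function $Y_\varepsilon$. Integrating it directly and using the elementary bound $Y_\varepsilon(0) \leq X_0 + \varepsilon$ gives
$$Y_\varepsilon(t) + b \int_0^t Y_\varepsilon \leq X_0 + \varepsilon + \int_0^t A + bt\varepsilon.$$
Finally, since $X \leq Y_\varepsilon \leq X + \varepsilon$ on $[0,T]$ (the right-hand bound coming from $(a^p + b^p)^{1/p} \leq a + b$ for nonnegative reals and $p \geq 1$), passing to the limit $\varepsilon \to 0^+$ and invoking dominated convergence on the integral term recovers the stated inequality $X(t) + b \int_0^t X \leq X_0 + \int_0^t A$. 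I expect the subtlest point to be verifying that the chain rule is legitimate under the mere a.e. differentiability hypothesis, but this follows immediately from the uniform positivity $Y_\varepsilon \geq \varepsilon$ and the smoothness of the regularizing map.
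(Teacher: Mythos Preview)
The paper does not actually prove this lemma; it simply cites \cite{CBD1} and calls the result well known. Your regularization $Y_\varepsilon = (X^p + \varepsilon^p)^{1/p}$ is the standard device for this kind of statement, and the algebraic manipulations you outline are correct.

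There is, however, one genuine technical point you gloss over. You write that the resulting inequality $\frac{d}{dt}Y_\varepsilon + bY_\varepsilon \leq A + b\varepsilon$ is ``a standard linear differential inequality for the absolutely continuous function $Y_\varepsilon$,'' and then integrate it. But absolute continuity of $Y_\varepsilon$ is equivalent to absolute continuity of $X^p$, and the stated hypothesis only gives a.e.\ differentiability of $X^p$. Under that literal hypothesis the lemma is actually false: take $p=1$, $b=0$, $A\equiv 0$, and let $X$ be the Cantor function, so that $X'=0$ a.e.\ yet $X(1)>X(0)$. The intended hypothesis --- and the one that holds in every application in the paper, where $X^p$ comes from an energy identity and is manifestly absolutely continuous --- is that $X^p$ be absolutely continuous; under that assumption your argument is complete. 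The chain-rule step you flagged at the end is, as you say, harmless because $Y_\varepsilon$ is bounded away from zero; it is the integration step where the regularity is actually consumed.
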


When proving Theorem \ref{Thm-relax}, we used the following 
global  existence result for \eqref{CauchyPbM}.
\begin{Prop}\label{PropExistMAppendix}
Let $1\leq p<\infty$ and $\mathcal{N}_0-\bar{\mathcal{N}}\in\dot\B^{\frac dp}_{p,1}$
with $\bar{\mathcal N}>0$. There exists a constant $c_0>0$ such that if 
\begin{equation}\label{eq:smallN0}\|\mathcal{N}_0-\bar{\mathcal{N}}\|_{\dot\B^{\frac dp}_{p,1}}\leq c_0 \end{equation}
then, System \eqref{CauchyPbM} 
with  a pressure function $P$ satisfying \eqref{Pression1} 
and supplemented with initial data $\mathcal{N}_0$
has a unique global solution $\mathcal{N}$ such that  $\mathcal{N}-\bar{\mathcal{N}}\in \cC_b(\R^+;\dot \B^{\frac dp}_{p,1})\cap L^1(\R^+;\dot\B^{\frac dp+2}_{p,1}).$
\end{Prop}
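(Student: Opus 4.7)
Set $n\triangleq \mathcal{N}-\bar{\mathcal{N}}$ and $a\triangleq P'(\bar{\mathcal{N}})>0.$ Writing
$$P(\bar{\mathcal{N}}+n)-P(\bar{\mathcal{N}})=an+F(n)\with F(n)\triangleq n^2\int_0^1(1-\theta)P''(\bar{\mathcal{N}}+\theta n)\,d\theta,$$
the equation \eqref{CauchyPbM} becomes
$$\d_tn-a\Delta n=\Delta F(n),\qquad n|_{t=0}=\mathcal{N}_0-\bar{\mathcal{N}}.$$
Since $\bar{\mathcal{N}}>0,$ the map $F$ is smooth on a neighborhood of $0$ with $F(0)=F'(0)=0.$ The plan is to combine endpoint maximal regularity for the heat semi-group in $\dot\B^{d/p}_{p,1}$ (see e.g.\ \cite{HJR}) with a standard iterative scheme. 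Precisely, if $u$ solves $\d_tu-a\Delta u=f$ with $u|_{t=0}=u_0,$ then
$$\|u\|_{L^\infty_T(\dot\B^{\frac dp}_{p,1})}+\|u\|_{L^1_T(\dot\B^{\frac dp+2}_{p,1})}\lesssim \|u_0\|_{\dot\B^{\frac dp}_{p,1}}+\|f\|_{L^1_T(\dot\B^{\frac dp}_{p,1})}.$$

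The heart of the matter is the nonlinear bound. Denote $\cX(T)\triangleq \|n\|_{L^\infty_T(\dot\B^{d/p}_{p,1})}+\|n\|_{L^1_T(\dot\B^{d/p+2}_{p,1})}.$ Writing $F(n)=n^2\Phi(n)$ with $\Phi$ smooth, combining the product law of Proposition \ref{LP} at regularity $s=\frac dp+2>0$ and the composition lemma (Proposition \ref{Composition}) with the embedding $\dot\B^{d/p}_{p,1}\hookrightarrow L^\infty$ yields
$$\|F(n)\|_{\dot\B^{\frac dp+2}_{p,1}}\lesssim \|n\|_{L^\infty}\|n\|_{\dot\B^{\frac dp+2}_{p,1}}\,C(\|n\|_{L^\infty}),$$
and hence, integrating in time,
$$\|\Delta F(n)\|_{L^1_T(\dot\B^{\frac dp}_{p,1})}\lesssim \|F(n)\|_{L^1_T(\dot\B^{\frac dp+2}_{p,1})}\lesssim \cX(T)^2\,C(\cX(T)).$$
Inserting this in the maximal regularity estimate gives the closed bootstrap
$$\cX(T)\leq C_0\|n_0\|_{\dot\B^{\frac dp}_{p,1}}+C_1\,\cX(T)^2\,C(\cX(T)),\qquad T>0,$$
which, under smallness \eqref{eq:smallN0} with $c_0$ small enough, forces $\cX(T)\leq 2C_0c_0$ for every $T>0$ by a standard continuity argument.

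Existence is then obtained by the usual Picard iteration: define $n^{(0)}\triangleq e^{at\Delta}n_0$ and $n^{(k+1)}$ as the solution of $\d_tn^{(k+1)}-a\Delta n^{(k+1)}=\Delta F(n^{(k)})$ with initial data $n_0.$ The above estimate shows that $(n^{(k)})_{k\in\N}$ is bounded in the solution space for every $T,$ while  for the difference $\delta^{(k)}\triangleq n^{(k+1)}-n^{(k)},$ using
$$F(u)-F(v)=(u-v)\int_0^1F'(\theta u+(1-\theta)v)\,d\theta$$
together with $F'(0)=0$ and the smallness of the $n^{(k)}$'s in $\dot\B^{d/p}_{p,1},$ the same computation yields
$$\|\delta^{(k)}\|_{L^\infty_T(\dot\B^{\frac dp}_{p,1})\cap L^1_T(\dot\B^{\frac dp+2}_{p,1})}\leq \frac12\|\delta^{(k-1)}\|_{L^\infty_T(\dot\B^{\frac dp}_{p,1})\cap L^1_T(\dot\B^{\frac dp+2}_{p,1})}\cdot$$
This provides a global strong solution and, applied to two solutions sharing the same data, also gives uniqueness. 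Time continuity with values in $\dot\B^{d/p}_{p,1}$ follows from the usual telescoping argument using the $L^1_T(\dot\B^{d/p+2}_{p,1})$ bound on $\d_tn.$

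The main potential obstacle is closing the nonlinear estimate at the \emph{endpoint} regularity $\dot\B^{d/p+2}_{p,1}$: since $F$ is genuinely nonlinear (not polynomial) whenever $\gamma\notin\N,$ one must invoke the composition lemma at high regularity, which is legitimate only because $n$ stays small in $L^\infty$ (hence $\bar{\mathcal{N}}+n$ remains in a fixed compact subset of $(0,\infty)$ where $P$ is smooth). Once this is in place, everything else is routine parabolic theory in critical Besov spaces.
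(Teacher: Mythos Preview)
Your proposal is correct and follows essentially the same approach as the paper: linearize the pressure around $\bar{\mathcal{N}}$ to extract the heat operator $\d_t-P'(\bar{\mathcal{N}})\Delta$, then control the quadratic remainder in $L^1_T(\dot\B^{\frac dp+2}_{p,1})$ via the tame product estimate \eqref{eq:prod1} together with the composition lemma (Proposition \ref{Composition}, applied with $s_1=\frac dp$ and $s_2=\frac dp+2$), and close by smallness. Your factorization $F(n)=n\cdot(n\Phi(n))$ is exactly the paper's decomposition $H_1(\mathcal{N})(\mathcal{N}-\bar{\mathcal{N}})$ in different notation; you are simply more explicit about the Picard iteration and the contraction for uniqueness, whereas the paper leaves these as a sketch.
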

\begin{proof}
Assume that we have a smooth solution $\mathcal{N}$ of \eqref{CauchyPbM}.
There exists a function $H_1$ vanishing at $\bar{\mathcal{N}}$ such that:
$$P(\mathcal{N})-P(\bar{\mathcal{N}})= P'(\bar{\mathcal{N}})\,(\mathcal{N}-\bar{\mathcal{N}}) + H_1(\mathcal{N})\,(\mathcal{N}-\bar{\mathcal{N}}).$$
Therefore one can rewrite  \eqref{CauchyPbM} as
$$\partial_t\mathcal{N}-P'(\bar{\mathcal{N}})\Delta \mathcal{N}=\Delta\Bigl(H_1(\mathcal{N})\,(\mathcal{N}-\bar{\mathcal{N}})\Bigr).$$
Hence, using classical endpoint maximal regularity estimates for the 
heat equation (see e.g. \cite{HJR}), we get for all $T>0,$
\begin{equation}\label{eq:NN}\|\mathcal{N}-\bar{\mathcal{N}}\|_{L^\infty_T(\dot\B^{\frac dp}_{p,1})}
+\|\mathcal{N}-\bar{\mathcal{N}}\|_{L^1_T(\dot\B^{\frac dp+2}_{p,1})}
\lesssim \|\mathcal{N}_0-\bar{\mathcal{N}}\|_{\dot\B^{\frac dp}_{p,1}}
+\|H_1(\mathcal{N})\,(\mathcal{N}-\bar{\mathcal{N}})\|_{L^1_T(\dot\B^{\frac dp+2}_{p,1})}. 
\end{equation}
Combining product laws from \eqref{eq:prod1} with 
composition estimates of Proposition \ref{Composition} yields
$$\|H_1(\mathcal{N})\,(\mathcal{N}-\bar{\mathcal{N}})\|_{L^1_T(\dot\B^{\frac dp+2}_{p,1})}\lesssim\|\mathcal{N}-\bar{\mathcal{N}}\|_{L^\infty_T(\dot\B^{\frac dp}_{p,1})}
\|\mathcal{N}-\bar{\mathcal{N}}\|_{L^1_T(\dot\B^{\frac dp+2}_{p,1})}.$$
Hence the left-hand side of \eqref{eq:NN} may be bounded
for all $T>0$ in terms of the data provided \eqref{eq:smallN0} is 
satisfied with a small enough $c_0.$ From that point, 
it is easy  to work out a fixed point procedure
yielding  the global existence of a solution for \eqref{CauchyPbM}.
Uniqueness follows from similar estimates. 
\end{proof}

 The first part of the existence proof relied
  on the following classical local well-posedness result
  for hyperbolic symmetric systems. 
 \begin{Thm}{\cite[Chap.~4]{HJR}}  \label{ThmExistLocalLp}
Consider the following hyperbolic system:
 $$\left\{\begin{array}{l} \d_t U+\sum_{k=1}^d A_k(U)\d_kU+A_0(U)=0,\\[1ex]
U|_{t=0}=U_0,\end{array}\right.\leqno(QS)$$
where $A_k,$ $k=0,\cdots,d,$  are smooth functions from $\R^n$ to the space of $n\times n$  matrices,  that are symmetric if $k\not=0,$ 
supplemented with initial data  $U_0$  in  the nonhomogeneous Besov space $\mathbb{B}^{\frac{d}{2}+1}_{2,1}(\R^d;\R^n)$. 

Then, $(QS)$ admits a unique maximal solution $U$  in $\mathcal{C}([0,T^*[;{\mathbb{B}^{\frac{d}{2}+1}_{2,1}})\cap\mathcal{C}^1([0,T^*[;{\mathbb{B}^{\frac{d}{2}}_{2,1}}),$ and there exists a positive constant $c$ such that 
$$T^*\geq\frac{c}{\norme{U_0}_{\mathbb{B}^{\frac{d}{2}+1}_{2,1}}}\cdotp$$
Furthermore, 
$$T^*<\infty \Longrightarrow \int_0^{T^*}\norme{\nabla U}_{L^\infty}=\infty.$$
\end{Thm}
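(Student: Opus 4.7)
The plan is to follow the classical strategy for quasilinear symmetric hyperbolic systems in critical Besov spaces (see \cite{HJR}). First, I would construct approximate solutions via the iterative scheme: set $U^0\equiv U_0$ and define $U^{n+1}$ as the solution of the linearized system
$$\d_t U^{n+1}+\sum_{k=1}^d A_k(U^n)\d_k U^{n+1}+A_0(U^n)=0,\qquad U^{n+1}|_{t=0}=U_0.$$
Each iterate is well defined on any $[0,T]$ by the classical $L^2$ theory for linear symmetric hyperbolic systems, since $U^n$ is Lipschitz thanks to the embedding $\B^{\frac{d}{2}+1}_{2,1}\hookrightarrow W^{1,\infty}$.

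The heart of the argument is a uniform a priori estimate in $\B^{\frac{d}{2}+1}_{2,1}$. Applying $\ddj$ to the equation, taking the $L^2$ scalar product with $\ddj U^{n+1}$ and using the symmetry of the $A_k$'s kills the principal transport part up to a commutator $[A_k(U^n),\ddj]\d_k U^{n+1}$, which is controlled by a tame estimate of the type provided by Lemma \ref{CP}. After multiplication by $2^{j(\frac{d}{2}+1)}$, summation over $j\in\Z$ and use of composition estimates on $A_k(U^n)$ and $A_0(U^n)$ (Proposition \ref{Composition}), one obtains
$$\frac{d}{dt}\|U^{n+1}\|_{\B^{\frac{d}{2}+1}_{2,1}}\leq C\bigl(1+\|U^n\|_{\B^{\frac{d}{2}+1}_{2,1}}\bigr)\|U^{n+1}\|_{\B^{\frac{d}{2}+1}_{2,1}}+C\|A_0(U^n)\|_{\B^{\frac{d}{2}+1}_{2,1}}.$$
A straightforward induction then shows that for $T\leq c/\|U_0\|_{\B^{\frac{d}{2}+1}_{2,1}}$ sufficiently small, all $U^n$ remain uniformly bounded by $2\|U_0\|_{\B^{\frac{d}{2}+1}_{2,1}}$ on $[0,T]$, giving the claimed lower bound on $T^*$.

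To pass to the limit, I would show that $(U^n)$ is Cauchy in $\mathcal{C}([0,T];L^2)$. The difference $\delta U^n=U^{n+1}-U^n$ satisfies a linear symmetric system whose source involves $\delta U^{n-1}$, and a direct $L^2$ energy estimate yields contraction for $T$ small enough. The Fatou property of Besov norms combined with the above uniform bound upgrades the limit $U$ to $L^\infty([0,T];\B^{\frac{d}{2}+1}_{2,1})$; continuity in time with values in $\B^{\frac{d}{2}+1}_{2,1}$ follows from the equation (which furnishes $\d_t U\in L^\infty([0,T];\B^{\frac{d}{2}}_{2,1})$) together with a standard Friedrichs mollification and density argument. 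Uniqueness in that class is obtained by an $L^2$-type energy estimate on the difference of two solutions, very much in the spirit of Lemma \ref{lemma:uniq}.

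For the blow-up criterion, the key observation is that the Gronwall factor in the a priori estimate above can be replaced by $\|\nabla U\|_{L^\infty}$ rather than the full $\B^{\frac{d}{2}+1}_{2,1}$ norm: splitting the commutator $[A_k(U),\ddj]\d_k U$ so that one derivative always falls on $A_k(U)$ (whose Lipschitz norm is controlled by $\|\nabla U\|_{L^\infty}$) yields
$$\|U(t)\|_{\B^{\frac{d}{2}+1}_{2,1}}\leq \|U_0\|_{\B^{\frac{d}{2}+1}_{2,1}}\exp\Bigl(C\int_0^t(1+\|\nabla U\|_{L^\infty})\Bigr)\cdotp$$
Hence if $T^*<\infty$ but $\int_0^{T^*}\|\nabla U\|_{L^\infty}<\infty$, then $U(T^*)$ still lies in $\B^{\frac{d}{2}+1}_{2,1}$ and the local existence statement can be restarted at $T^*$, contradicting maximality. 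The main obstacle throughout is the critical nature of $\B^{\frac{d}{2}+1}_{2,1}$: the $\ell^1$-summability index is essential to ensure the Lipschitz embedding, and every nonlinear estimate must be \emph{tame}, that is, linear in the top-order norm of the solution.
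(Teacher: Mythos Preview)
The paper does not prove this theorem at all: it is stated in the Appendix as a classical result and simply cited from \cite[Chap.~4]{HJR}. Your sketch is a faithful outline of the standard proof that one finds in that reference (iterative linearization, uniform bounds via localized energy estimates and commutator control, contraction in $L^2$, Fatou property, and the Gronwall-based continuation criterion), so there is nothing to compare.
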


 Next, let us prove the equivalence between Condition (SK)  and the strong ellipticity condition
 for System \eqref{GE2}  pointed out in the introduction.
 \begin{Lemme}\label{l:parabolic}
Assume that $\bar A_{1,1}^k=0$  for all $k\in\{1,\cdots,d\}.$
Then, the following assertions are equivalent: \begin{itemize}
    \item System \eqref{GEQSYM} satisfies the condition (SK) at $\bar{V}$;\smallbreak
    \item $\hbox{the operator } \cA\triangleq-\sum_{k=1}^d\sum_{\ell=1}^d \bar A_{1,2}^kL_2^{-1}\bar A^{\ell}_{2,1}\d_k\d_\ell \text{ is strongly elliptic.}$
    \end{itemize}
    If one of the above assertions is satisfied and  if $\text{Supp}(\mathcal{F}Z_1)\subset\{\xi\in\mathbb{R}^d : R_1\lambda\leq|\xi|\leq R_2\lambda\}$ for some $0<R_1<R_2$ then, for all $p\in[2,\infty[,$  there exists $c=c(p,d,R_1,R_2)>0$ such that 
\begin{equation}\label{Elliptic}\int_{\R^d}\sum_{j=1}^{n_1}\sum_{k=1}^d\sum_{\ell=1}^d \bar A_{1,2}^kL_2^{-1}\bar A^{\ell}_{2,1}\d_k\d_\ell Z_1^j\: |Z_1|^{p-2}Z_1^j\geq  c \lambda^2\| Z_1\|_{L^p}^p.
\end{equation}
\end{Lemme}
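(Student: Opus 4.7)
The plan is to proceed in two stages: first establish the equivalence between (SK) and strong ellipticity of $\cA$ via a symbol computation, then prove the coercivity inequality by Plancherel for $p=2$ and by a spectral-localization/semigroup argument for $p>2$ (the inequality being understood, consistent with its use in Proposition \ref{PropZ1}, as $\int \cA Z_1\cdot Z_1|Z_1|^{p-2}\,dx \geq c\lambda^2\|Z_1\|_{L^p}^p$).

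For the equivalence, I would compute the principal symbol of $\cA$. Using $\widehat{\partial_k\partial_\ell f}(\xi) = -\xi_k\xi_\ell \hat f(\xi)$ and the symmetry $\bar A_{1,2}^k = (\bar A_{2,1}^k)^T$ inherited from the symmetry of the $A^k$'s, I find
$$\sigma_\cA(\xi) = |\xi|^2 (M_\omega^{21})^T L_2^{-1} M_\omega^{21}, \qquad M_\omega^{21} \triangleq \sum_{k=1}^d \omega_k \bar A_{2,1}^k, \quad \omega \triangleq \xi/|\xi|.$$
A short algebraic argument (using $|y| \leq \|L_2\|\,|L_2^{-1} y|$) shows that $L_2+L_2^T \geq 2cI$ forces $L_2^{-1}+(L_2^{-1})^T \geq 2c'I$ for some $c'>0$, whence $(\sigma_\cA(\xi)\phi,\phi) = |\xi|^2 (L_2^{-1} M_\omega^{21}\phi, M_\omega^{21}\phi) \geq c'|\xi|^2 |M_\omega^{21}\phi|^2$. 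By continuity of $\omega \mapsto M_\omega^{21}$ and compactness of $\S^{d-1}$, strong ellipticity of $\cA$ is then equivalent to $\ker M_\omega^{21} = \{0\}$ for every $\omega \in \S^{d-1}$. To close with (SK): $L\phi = 0$ forces $\phi = (\phi_1, 0)$, and since $M_\omega^{11} = 0$ by assumption, $M_\omega\phi = (0, M_\omega^{21}\phi_1)$; the relation $\lambda\phi + M_\omega\phi = 0$ then splits into $\lambda\phi_1 = 0$ and $M_\omega^{21}\phi_1 = 0$, with a nontrivial solution existing exactly when $\ker M_\omega^{21} \neq \{0\}$.

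For the integral inequality, the case $p=2$ follows immediately from Plancherel together with the lower bound on $\sigma_\cA$ on the annular support of $\hat Z_1$. The case $p>2$ is where the main difficulty lies, since pointwise positivity of a matrix Fourier symbol does not automatically transfer to positivity of the $L^p$ pairing against $Z_1|Z_1|^{p-2}$. I would follow the Bernstein-type positivity strategy developed by Danchin and coauthors in the $L^p$ framework for compressible Navier-Stokes: rescale $\cA = \lambda^2 B(D/\lambda)$, introduce the parabolic semigroup $\cS_t \triangleq e^{-t\cA}$, and establish the $L^p$ decay estimate $\|\cS_t Z_1\|_{L^p} \leq e^{-ct\lambda^2}\|Z_1\|_{L^p}$. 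Differentiating at $t=0^+$ and using $\frac{d}{dt}\|\cS_t Z_1\|_{L^p}^p\big|_{t=0} = -p\int \cA Z_1 \cdot Z_1|Z_1|^{p-2}\,dx$ then delivers the claim. The chief technical obstacle is the semigroup bound itself, which reduces to controlling the $L^1$-norm of the convolution kernel of the spectrally truncated operator $\chi(D/\lambda)\cS_t$ (with $\chi$ a smooth cutoff equal to $1$ on the annular support of $\hat Z_1$) by $Ce^{-ct\lambda^2}$; the exponential factor comes from the bound $\mathrm{Re}\,\sigma_\cA \gtrsim \lambda^2$ on the annulus combined with matrix functional calculus for $e^{-t\sigma_\cA(\xi)}$, while the uniform $L^1$-bound on the kernel is obtained via stationary-phase/integration-by-parts estimates on the oscillatory integral representing it. Once this is in hand, Young's convolution inequality closes the $L^p$ estimate uniformly for $p \in [2,\infty)$.
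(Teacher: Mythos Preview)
Your treatment of the equivalence is correct and in fact more self-contained than the paper's: you reduce both directions to the single criterion $\ker M_\omega^{21}=\{0\}$ for all $\omega\in\S^{d-1}$, whereas the paper cites prior work for (SK)$\Rightarrow$ellipticity and, for the converse, invokes the Kalman rank characterisation of (SK) by checking that the block matrix $\bigl(\begin{smallmatrix}L\\L\cA(\xi)\end{smallmatrix}\bigr)$ has full rank.

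For the coercivity inequality, however, your semigroup plan has a genuine gap. The kernel/Young argument you outline can only yield
\[
\|e^{-t\cA}Z_1\|_{L^p}\le C\,e^{-ct\lambda^2}\|Z_1\|_{L^p}
\]
with some $C>1$ (indeed $C\ge\|\cF^{-1}\chi\|_{L^1}>1$, since any $\chi$ vanishing at the origin has oscillating inverse transform). You then write the bound without $C$ and differentiate at $t=0^+$, but that step fails: from $f(t)\le C^p e^{-pc\lambda^2 t}f(0)$ with $C>1$ one gets only $f'(0^+)\le\lim_{t\to0^+}(C^p e^{-pc\lambda^2 t}-1)f(0)/t=+\infty$, which is vacuous. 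Mean-value or iteration tricks produce the desired bound only for $e^{-s\cA}Z_1$ at some uncontrolled $s>0$, not for $Z_1$ itself.

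The paper avoids this by a different reduction: since the transpose of $\sigma_\cA(\xi)$ corresponds to replacing $L_2^{-1}$ by $(L_2^{-1})^T$, one may replace $L_2^{-1}$ by its symmetric part and work with a \emph{symmetric} positive-definite symbol. After an orthogonal change of basis one is reduced to the situation treated in \cite[Lemma~A.5]{Da01}, where the $L^p$--Bernstein lower bound is obtained by a direct integration-by-parts computation (of the type $\int(-\Delta u)\,u|u|^{p-2}=(p-1)\int|u|^{p-2}|\nabla u|^2$ combined with a multiplier identity that extracts one derivative from the spectral cutoff), \emph{not} by differentiating the semigroup decay. If you wish to salvage the semigroup route, you would additionally need an a priori $L^p$--contractivity statement for $e^{-t\cA}$ on annulus-localised data, which for matrix-valued symbols is itself nontrivial and essentially as hard as the inequality you are after.
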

\begin{proof}
 The direct implication was proved in \cite{Yong,WasiolekPeng,ThesisWasiolek}.
 For the converse implication, let us still denote by $L_2$
 the $n_2\times n_2$ (invertible) matrix of $L_2$ and set
 $$\cA_{\ell,m}(\xi)\triangleq\sum_{k=1}^d\bar A^k_{\ell,m}\xi_k,\quad
 1\leq \ell,m\leq2.$$
 Our assumptions ensure that  the symmetric parts of $L_2$ and  of the matrix $\cA_{1,2}(\xi)L_2^{-1}\cA_{2,1}(\xi)$ for all 
 $\xi\not=0$ are   positive definite. This in particular implies that
 the ranks of $\cA_{1,2}(\xi)$ and $\cA_{2,1}(\xi)$ must be equal to 
 $n_1$ and thus, so does the rank of $L_2\cA_{2,1}(\xi).$
 Now, the matrices of $L$ and of $L\cA(\xi)$ can be written by blocks as follows:
 $$
 L=\begin{pmatrix}0&0\\0&L_2\end{pmatrix}\andf
 L\cA(\xi)=\begin{pmatrix}0&0\\L_2\cA_{2,1}(\xi)& L_2\cA_{2,2}(\xi)\end{pmatrix}\cdotp
$$ 
Hence the rank of $\begin{pmatrix} L\\ L\cA(\xi)\end{pmatrix}$
is $n_1+n_2=n,$ and Condition (SK) is thus satisfied.
\medbreak
To prove Inequality \eqref{Elliptic}, we first observe that 
$L_2^{-1}$ may be replaced by its symmetric part (this leaves the left-hand side unchanged).
Then, performing an appropriate change of orthonormal basis reduces the proof
to the case where the matrix $\sum_{k=1}^d\sum_{\ell=1}^dA_{1,2}^kL_2^{-1}\bar A^{\ell}_{2,1}$
is diagonal and positive definite. From this point, 
one can argue exactly as 
in the proof of Lemma A.5 in \cite{Da01}.
\end{proof}

 The proof of the following  inequality may be found in e.g. \cite[Chap. 2]{HJR}.
\begin{Lemme} \label{Commutateur1}
There exists a constant $C$ such that  for all
$1\leq p,q,r\leq\infty$ such that $\frac{1}{p}+\frac{1}{q}=\frac{1}{r},$ 
all functions $a$ with gradient  in $L^p,$ and $b$ in $L^q,$ 
we have
$$\norme{[\dot{\Delta}_j,a]b}_{L^r}\leq C2^{-j}\norme{\nabla a}_{L^q}\norme{b}_{L^p}
\quad\hbox{for all }\ j\in\Z.$$
\end{Lemme}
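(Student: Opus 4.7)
The plan is to leverage the convolution representation of the dyadic block. Since $\dot\Delta_j f = h_j \ast f$ with $h_j(x) \triangleq 2^{jd} h(2^j x)$ where $h \triangleq \cF^{-1}\varphi \in \cS(\R^d)$, a direct computation gives the integral identity
$$[\dot\Delta_j, a]b(x) = \int_{\R^d} h_j(x-y)\bigl(a(y) - a(x)\bigr) b(y)\, dy.$$

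Next I would turn the increment $a(y) - a(x)$ into an expression involving only $\nabla a$ by applying the first-order Taylor formula with integral remainder:
$$a(y) - a(x) = \int_0^1 (y-x)\cdot\nabla a\bigl(x + \tau(y-x)\bigr)\, d\tau.$$
Substituting and performing the change of variable $z = y - x$ yields the representation
$$[\dot\Delta_j, a]b(x) = \int_0^1\!\!\int_{\R^d} h_j(-z)\, z\cdot\nabla a(x + \tau z)\, b(x + z)\, dz\, d\tau.$$

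Taking $L^r$ norms in $x$, applying Minkowski's integral inequality in both integration variables, using the translation invariance of Lebesgue norms and then Hölder's inequality (which is precisely applicable because $\frac{1}{p} + \frac{1}{q} = \frac{1}{r}$), I arrive at
$$\norme{[\dot\Delta_j, a]b}_{L^r} \leq \norme{\nabla a}_{L^p}\norme{b}_{L^q}\int_{\R^d} |z|\,|h_j(z)|\, dz.$$
A change of variable $z \mapsto 2^{-j}z$ gives $\int |z||h_j(z)|\,dz = 2^{-j}\int|z||h(z)|\,dz$, and the latter integral is finite thanks to $h \in \cS(\R^d)$. This yields the desired bound (the roles of $p$ and $q$ on the right-hand side of the statement being symmetric under the Hölder constraint).

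There is no real obstacle here: the proof is entirely standard, resting on the convolution representation of $\dot\Delta_j$, a one-term Taylor expansion, and Hölder/Minkowski inequalities. The only point requiring mild care is the justification of Fubini's theorem to exchange the $\tau$ and $z$ integrations at the endpoint values $p, q, r \in \{1,\infty\}$, which is handled by a standard approximation argument (e.g.\ by working first with Schwartz $a$ and $b$ and then passing to the limit).
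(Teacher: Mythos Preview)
Your proof is correct and is precisely the standard argument: the paper does not supply its own proof but refers to \cite[Chap.~2]{HJR}, where exactly this approach (convolution kernel representation, first-order Taylor expansion, then Minkowski and H\"older) is carried out. Your remark on the interchangeability of the exponents $p$ and $q$ correctly addresses the apparent swap in the statement.
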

The following result  is  proved in  e.g.  \cite[Chap. 2]{HJR}.  
\begin{Prop}\label{C1}    
For all $1\leq p\leq\infty$
and  $-\min(d/p,d/p')<s\leq d/p,$ we have
 \begin{equation}\label{eq:com1}
2^{js}\norme{[w,\dot{\Delta}_j]\nabla v}_{L^p}\leq Cc_j\norme{\nabla w}_{\dot{\mathbb{B}}^{\frac{d}{p}}_{p,1}}\norme{v}_{\dot{\mathbb{B}}^{s}_{p,1}}  \with\sum_{j\in\mathbb{Z}}c_j=1.
\end{equation}
 \end{Prop}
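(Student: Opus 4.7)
\begin{Proof}[Proof plan for Proposition \ref{C1}]
The natural approach is Bony's paraproduct decomposition, which is the standard tool for such commutator estimates in Besov spaces. Writing
$$w\nabla v = T_w\nabla v + T_{\nabla v}w + R(w,\nabla v) \andf  w\,\dot\Delta_j\nabla v = T_w\dot\Delta_j\nabla v + T_{\dot\Delta_j\nabla v}w + R(w,\dot\Delta_j\nabla v),$$
the commutator splits as
$$[w,\dot\Delta_j]\nabla v = [T_w,\dot\Delta_j]\nabla v + \bigl(T_{\dot\Delta_j\nabla v}w-\dot\Delta_j T_{\nabla v}w\bigr) + \bigl(R(w,\dot\Delta_j\nabla v)-\dot\Delta_j R(w,\nabla v)\bigr)\cdotp$$
I would bound these three pieces separately, summing against $2^{js}$.

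For the first piece, I would use the quasi-orthogonality of the paraproduct: only indices $k$ with $|k-j|\leq N_0$ contribute, and one can then use the integral representation of $\dot\Delta_j$ to write
$$[\dot S_{k-1}w,\dot\Delta_j]\dot\Delta_k\nabla v(x)=\int h_j(y)\bigl(\dot S_{k-1}w(x-y)-\dot S_{k-1}w(x)\bigr)\dot\Delta_k\nabla v(x-y)\,dy,$$
where $h_j=2^{jd}h(2^j\cdot)$ is the kernel of $\dot\Delta_j$ with $h$ Schwartz. A first-order Taylor expansion together with $\|\nabla \dot S_{k-1}w\|_{L^\infty}\lesssim \|\nabla w\|_{\dot\B^{d/p}_{p,1}}$ (by the embedding $\dot\B^{d/p}_{p,1}\hookrightarrow L^\infty$) produces the gain $2^{-j}$ and yields
$$2^{js}\|[T_w,\dot\Delta_j]\nabla v\|_{L^p}\lesssim \|\nabla w\|_{\dot\B^{d/p}_{p,1}}\sum_{|j-k|\leq N_0} 2^{(j-k)(s-1)}\bigl(2^{ks}\|\dot\Delta_k v\|_{L^p}\bigr),$$
which is of the desired form thanks to the convolution of $\ell^1$ sequences.

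For the second piece, expanding $T_{\nabla v}w$ shows that $\dot\Delta_j T_{\nabla v}w$ and $T_{\dot\Delta_j\nabla v}w$ are frequency-localized sums involving only $\dot\Delta_{j'}w$ with $j'\sim j$, each of which is controlled by $\|\dot\Delta_{j'}w\|_{L^\infty}\lesssim 2^{-j'}\|\nabla w\|_{\dot\B^{d/p}_{p,1}}$ paired with $\|\dot S_{j'-1}\nabla v\|_{L^p}$; the bound $s\leq d/p$ ensures the low-frequency summation converges. For the remainder piece, I would decompose $R(w,\nabla v)=\sum_{|k-k'|\leq 1}\dot\Delta_kw\,\dot\Delta_{k'}\nabla v$, use Bernstein's inequality to transfer one derivative from $\dot\Delta_{k'}\nabla v$ onto $\dot\Delta_k w$, and rely on the lower bound $s>-\min(d/p,d/p')$ to guarantee summability at high frequencies where both $\dot\Delta_j$-localizations come from large indices $k\sim k'\geq j$.

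The trickiest step, and the one that forces the lower bound on $s$, is this last remainder estimate: it requires balancing the two H\"older exponents $p$ and $p'$ in the product $\dot\Delta_kw\cdot\dot\Delta_{k'}\nabla v$ (the only place where $p'$ enters), and controlling the tail series $\sum_{k\geq j}2^{(j-k)(s+\min(d/p,d/p'))}$. Once the three pieces are summed, the $\ell^1$-convolution structure delivers an admissible sequence $(c_j)$ with $\sum c_j=1$, completing the proof.
\end{Proof}
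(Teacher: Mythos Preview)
The paper does not actually give a proof of this proposition: it simply records the inequality and refers the reader to \cite[Chap.~2]{HJR}. Your proposal outlines precisely the standard Bony paraproduct argument that one finds there, with the correct identification of where each constraint on $s$ enters (the upper bound $s\leq d/p$ in the $T_{\nabla v}w$ piece, the lower bound $s>-\min(d/p,d/p')$ in the remainder), so it is both correct and in line with the intended proof.
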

 
 The following product laws in Besov spaces have been used several times.
 \begin{Prop} \label{LP} Let $(s,p,r)$ be in $]0,\infty[\times[1,\infty]^2.$ Then, 
 $\dot{\mathbb{B}}^{s}_{p,r}\cap L^\infty$ is an algebra and we have
\begin{equation}\label{eq:prod1}
\norme{ab}_{\dot{\mathbb{B}}^{s}_{p,r}}\leq C\bigl(\norme{a}_{L^\infty}\norme{b}_{\dot{\mathbb{B}}^{s}_{p,r}}+\norme{a}_{\dot{\mathbb{B}}^{s}_{p,r}}\norme{b}_{L^\infty}\bigr)\cdotp
\end{equation}
If, furthermore, $-\min(d/p,d/p')<s\leq d/p,$ then the following inequality holds:
\begin{equation}\label{eq:prod2}
\|ab\|_{\dot\B^{s}_{p,1}}\leq C\|a\|_{\dot\B^{\frac dp}_{p,1}}\|b\|_{\dot\B^{s}_{p,1}}.
\end{equation}
Finally,  if  $-d/p<\sigma_1\leq \min(d/p,d/p')$, then the following inequality holds true: 
\begin{equation}\label{eq:prod3} 
\norme{ab}_{\dot{\mathbb{B}}^{-\sigma_1}_{p,\infty}}\leq C  \norme{a}_{\dot{\mathbb{B}}^{\frac{d}{p}}_{p,1}}\norme{b}_{\dot{\mathbb{B}}^{-\sigma_1}_{p,\infty}}.
\end{equation}
\end{Prop}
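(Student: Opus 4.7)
My plan is to prove the three product inequalities via Bony's paraproduct decomposition
$$ab=T_ab+T_ba+R(a,b), \quad T_ab\triangleq\sum_{j}\dot S_{j-1}a\,\dot\Delta_j b,\quad R(a,b)\triangleq\sum_{|j-j'|\leq 1}\dot\Delta_j a\,\dot\Delta_{j'} b,$$
which is the by-now standard route and is tailored to disentangle frequency interactions. The plan is to estimate each of the three pieces in the appropriate Besov norm and then combine them.

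For \eqref{eq:prod1}, the paraproducts $T_ab$ and $T_ba$ are handled by the almost-orthogonality properties: since $\dot\Delta_j(T_ab)$ is spectrally supported in an annulus of size $2^j$ and $\|\dot S_{j-1}a\,\dot\Delta_jb\|_{L^p}\leq\|a\|_{L^\infty}\|\dot\Delta_jb\|_{L^p}$, one directly reads off $\|T_ab\|_{\dot\B^s_{p,r}}\lesssim\|a\|_{L^\infty}\|b\|_{\dot\B^s_{p,r}}$; symmetrically for $T_ba$. For the remainder $R(a,b)$, the spectral support of $\dot\Delta_ja\,\dot\Delta_{j'}b$ is contained in a ball of radius $\lesssim 2^{\max(j,j')}$, so after applying $\dot\Delta_k$ we only retain terms with $\max(j,j')\geq k-N_0$, and the positivity of $s$ ensures summability of $2^{ks}\sum_{k\leq\max(j,j')+N_0}$ in the $\ell^r$ sense. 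This gives the algebra bound. The main delicate point here is the positivity $s>0$, which is precisely what makes the remainder summable.

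For \eqref{eq:prod2}, I would use the embedding $\dot\B^{d/p}_{p,1}\hookrightarrow L^\infty$ to place one factor in $L^\infty$ whenever convenient, then reproduce the paraproduct analysis. The paraproducts $T_ab$ and $T_ba$ are controlled by $\|a\|_{L^\infty}\|b\|_{\dot\B^s_{p,1}}\lesssim\|a\|_{\dot\B^{d/p}_{p,1}}\|b\|_{\dot\B^s_{p,1}}$. For the remainder, Bernstein's inequality upgrades the $L^p$ norm to $L^{p/2}$ and then back to $L^p$ at the cost of a factor $2^{jd/p}$; this is where the condition $s\leq d/p$ together with $s>-\min(d/p,d/p')$ enters, ensuring the geometric series in both high and low frequencies converges. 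The hardest sub-case is typically the negative endpoint, where one has to use duality-type arguments carefully with the condition $s>-d/p'$.

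For \eqref{eq:prod3}, the target space $\dot\B^{-\sigma_1}_{p,\infty}$ has a weaker summation exponent $\ell^\infty$, so one can afford to place $a$ in $\dot\B^{d/p}_{p,1}$ (ensuring $L^\infty$ control) and $b$ in a negative-regularity space. The paraproduct $T_ab$ gains exactly what is needed because $\dot\B^{d/p}_{p,1}\hookrightarrow L^\infty$; $T_ba$ requires using that $\sigma_1\leq d/p$ so that $T_ba$ still makes sense, and the remainder uses $\sigma_1<d/p'$ (equivalently $-\sigma_1>-d/p'$) exactly as in the proof of \eqref{eq:prod2}. The main obstacle throughout is to track carefully how the endpoints $-\min(d/p,d/p')$ and $d/p$ arise from the remainder term: all three inequalities are classical and essentially boil down to showing that, under the stated conditions on $s$ or $\sigma_1$, the frequency sums coming out of the remainder $R(a,b)$ converge. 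I would invoke \cite[Chap.~2]{HJR} for the detailed verification rather than reproduce the standard estimates in full.
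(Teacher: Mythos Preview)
Your approach via Bony's paraproduct decomposition is correct and is precisely the standard route one finds in \cite[Chap.~2]{HJR}. Note, however, that the paper does not actually supply a proof of this proposition: it is stated in the Appendix as a known result (``The following product laws in Besov spaces have been used several times'') without argument, in the same spirit as the neighboring Propositions~\ref{C1} and~\ref{Composition}, which are explicitly attributed to \cite{HJR}. So there is nothing to compare against beyond observing that your sketch reproduces the classical proof that the paper implicitly defers to.
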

The following result for left composition can be found in \cite{HJR}.
\begin{Prop}\label{Composition}
Let $p\geq 1$ and $f$ be a function in $\mathcal{C}^\infty(\mathbb{R})$ such that $f(0)=0$. let $(s_1,s_2)\in]0,\infty[^2$ and $(r_1,r_2)\in[1,\infty]^2$. We assume that $s_1<{d}/{p}$ or that $s_1={d}/{p}$ and $r_1=1$.

Then, for every real-valued function $u$ in $\dot{\mathbb{B}}^{s_1}_{p,r_1}\cap\dot{\mathbb{B}}^{s_2}_{p,r_2}\cap L^\infty$, the function $f\circ u$ belongs to $\dot{\mathbb{B}}^{s_1}_{p,r_1}\cap\dot{\mathbb{B}}^{s_2}_{p,r_2}\cap L^\infty$ and we have
$$\norme{f\circ u}_{\dot{\mathbb{B}}^{s_k}_{p,r_k}}\leq C\left(f',\norme{u}_{L^\infty}\right)\norme{u}_{\dot{\mathbb{B}}^{s_k}_{p,r_k}}\quad\hbox{for}\  k\in\{1,2\}.$$
\end{Prop}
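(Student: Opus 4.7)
The plan is to prove this composition estimate via a frequency-telescoping decomposition of $f\circ u$ combined with a first-order Taylor expansion, and then to invoke a standard summation lemma for homogeneous Besov spaces. Since $f\in\mathcal{C}^\infty$ with $f(0)=0$ and $u\in L^\infty$, the mean-value theorem yields, for every $k\in\Z$,
\[
f(\dot S_{k+1}u)-f(\dot S_k u)\,=\,m_k\,\dot\Delta_k u\with m_k\triangleq\int_0^1 f'\bigl(\dot S_k u+\tau\,\dot\Delta_k u\bigr)d\tau,
\]
with $(m_k)_{k\in\Z}$ uniformly bounded in $L^\infty$ by a constant $M$ depending only on $f'$ and $\norme{u}_{L^\infty}$. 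Summing the telescoping identity should then give
\[
f(u)\,=\,\sum_{k\in\Z}m_k\,\dot\Delta_k u
\]
in the almost-everywhere sense: the convergence as $k\to+\infty$ follows from $\dot S_k u\to u$ a.e.\ together with continuity of $f$, while the convergence $f(\dot S_k u)\to 0$ as $k\to-\infty$ uses precisely the hypothesis $s_1<d/p$, or $s_1=d/p$ with $r_1=1$. Indeed this forces $u\in\mathcal{S}'_h$, which by definition ensures $\norme{\dot S_k u}_{L^\infty}\to 0$, and $f(0)=0$ concludes.

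For each $i\in\{1,2\}$, the $\dot\B^{s_i}_{p,r_i}$-norm of $f(u)$ is then controlled via the following general summation fact: if a family $(v_k)_{k\in\Z}$ has $\widehat{v_k}$ supported in a dyadic annulus of size $2^k$ and $(m_k)$ is bounded in $L^\infty$ by $M$, then for every $s>0$ and $r\in[1,\infty]$,
\[
\Bigl\|\sum_{k\in\Z}m_k\,v_k\Bigr\|_{\dot\B^{s}_{p,r}}\,\leq\,C\,M\,\bigl\|\bigl(2^{ks}\norme{v_k}_{L^p}\bigr)_k\bigr\|_{\ell^r}.
\]
Taking $v_k=\dot\Delta_k u$ then gives the announced estimate $\norme{f\circ u}_{\dot\B^{s_i}_{p,r_i}}\leq C\,M\,\norme{u}_{\dot\B^{s_i}_{p,r_i}}$, since $s_i>0$ by hypothesis.

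The main obstacle is establishing the above summation lemma rigorously: when $k\ll j$, the product $m_k\,\dot\Delta_k u$ is \emph{not} spectrally localized (as $m_k$ is only $L^\infty$), and $\dot\Delta_j(m_k\,\dot\Delta_k u)$ has no obvious decay in $j-k$. The way around is a careful almost-orthogonality analysis splitting the $k$-sum at $k\sim j-N_0$: for $k\geq j-N_0$ one uses the crude bound $\norme{\dot\Delta_j(m_k\,\dot\Delta_k u)}_{L^p}\leq CM\norme{\dot\Delta_k u}_{L^p}$ and the geometric factor $2^{(j-k)s_i}\leq 2^{N_0 s_i}$, while for $k<j-N_0$ one has to iterate the Taylor argument or exploit commutator-type estimates against $\dot\Delta_j$. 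In both regimes the strict positivity $s_i>0$ provides the decay needed to run a discrete Young-type convolution inequality in $\ell^{r_i}$. Once this technical lemma is settled, the estimate follows uniformly with respect to every norm of $u$ other than $\norme{u}_{L^\infty}$ and $\norme{u}_{\dot\B^{s_i}_{p,r_i}}$, as claimed.
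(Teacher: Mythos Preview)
The paper does not prove this proposition at all: it is simply quoted from the reference \cite{HJR} (the Bahouri--Chemin--Danchin textbook). Your telescoping decomposition $f(u)=\sum_k m_k\,\dot\Delta_k u$ with $m_k=\int_0^1 f'(\dot S_k u+\tau\dot\Delta_k u)\,d\tau$, followed by an almost-orthogonality summation lemma, is exactly the argument given in that reference, so there is no meaningful comparison to make with the paper itself.

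Your sketch is structurally correct, but the one place that is left vague is also the crux of the argument: the regime $k<j-N_0$. Saying one should ``iterate the Taylor argument or exploit commutator-type estimates'' is imprecise. The actual mechanism is that $m_k$ is a smooth function of $\dot S_{k+1}u$, and since $\dot S_{k+1}u$ has spectrum in a ball of radius $\sim 2^k$, Bernstein's inequality combined with the Fa\`a di Bruno formula gives $\|\partial^\alpha m_k\|_{L^\infty}\le C_\alpha(f',\|u\|_{L^\infty})\,2^{k|\alpha|}$ for every multi-index $\alpha$. This in turn yields $\|\dot\Delta_j m_k\|_{L^\infty}\le C_N\,2^{N(k-j)}$ for any $N$, and hence $\|\dot\Delta_j(m_k\dot\Delta_k u)\|_{L^p}\le C_N\,2^{N(k-j)}\|\dot\Delta_k u\|_{L^p}$ when $j>k+N_0$. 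Taking $N>s_i$ makes $2^{(j-k)(s_i-N)}$ summable over $k<j-N_0$, and the discrete Young inequality closes the estimate. Once you make this step explicit, your proof is complete and matches the textbook argument.
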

As a consequence (see \cite[Cor. 2.66]{HJR}), if $g$ is a $\cC^\infty(\R)$ function such that $g'(0)=0$, then,  
for all $u,v$ in $\dot\B^s_{p,1}\cap L^\infty$ with $s>0,$ we have
\begin{equation}\label{eq:compo}
\|g(v)-g(u)\|_{\dot\B^s_{p,1}} \leq C\Bigl(\|v-u\|_{L^\infty}\|(u,v)\|_{\dot\B^s_{p,1}} + 
\|v-u\|_{\dot\B^s_{p,1}} \|(u,v)\|_{L^\infty}\Bigr)\cdotp
\end{equation}

We also need the following more involved product law to handle 
the high frequencies of some non-linear terms.
 \begin{Prop} \label{LPP} 
Let  $2\leq p\leq 4$ and   $p^*\triangleq 2p/(p-2).$ 
For all $\sigma \geq s>0$, we have
\begin{equation}\label{eq:prod4}
\|ab\|^h_{\dot\B^{s}_{2,1}}\lesssim  \|a\|_{\dot\B^{\frac dp}_{p,1}}\|b\|^h_{\dot\B^{s}_{2,1}}
+\|b\|_{\dot\B^{\frac dp}_{p,1}}\|a\|^h_{\dot\B^{s}_{2,1}}+ \|a\|^\ell_{\dot\B^{\frac dp-\frac{d}{p*}}_{p,1}}\|b\|^\ell_{\dot\B^{\sigma}_{p,1}}
+ \|b\|^\ell_{\dot\B^{\frac dp-\frac{d}{p*}}_{p,1}}\|a\|^\ell_{\dot\B^{\sigma}_{p,1}}.
\end{equation}
\end{Prop}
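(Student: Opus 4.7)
The approach is based on Bony's paraproduct decomposition $ab = T_a b + T_b a + R(a,b)$, with each piece analyzed after an additional splitting $a = a^\ell + a^h$, $b = b^\ell + b^h$ relative to the threshold $J_1$.

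For the contributions where the ``input'' carrying the derivative count is a high-frequency piece—typically $T_a b^h$, $T_{b^h}a$, $R(a^h,b)$ and $R(a,b^h)$—the classical paraproduct and remainder continuity results, combined with the embedding $\dot\B^{d/p}_{p,1}\hookrightarrow L^\infty$ (and, for the remainder, with the Sobolev embedding $\dot\B^{d/p-d/p^*}_{p,1}\hookrightarrow L^{p^*}$), yield bounds of the form $\|a\|_{\dot\B^{d/p}_{p,1}}\|b\|^h_{\dot\B^s_{2,1}}$ and the symmetric $\|b\|_{\dot\B^{d/p}_{p,1}}\|a\|^h_{\dot\B^s_{2,1}}$. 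These account for the first two terms on the right-hand side of \eqref{eq:prod4}.

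The delicate pieces are the three low-low contributions $T_{a^\ell}b^\ell$, $T_{b^\ell}a^\ell$ and $R(a^\ell,b^\ell)$. For each, I would use the paraproduct/remainder continuity with the Hölder-type indices $1/p^* + 1/p = 1/2$, in the form $\|T_{a^\ell}b^\ell\|_{\dot\B^\sigma_{2,1}} \lesssim \|a^\ell\|_{L^{p^*}}\|b^\ell\|_{\dot\B^\sigma_{p,1}}$ (and symmetrically, with the analogous remainder estimate requiring $\sigma>0$). The critical Sobolev embedding $\|a^\ell\|_{L^{p^*}} \lesssim \|a\|^\ell_{\dot\B^{d/p-d/p^*}_{p,1}}$ is valid precisely when $d/p - d/p^* = d(4-p)/(2p) \geq 0$, which is exactly the hypothesis on $p$. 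Combined with the fact that $T_{a^\ell}b^\ell$ and $R(a^\ell,b^\ell)$ are frequency-localized in a ball of radius $\sim 2^{J_1}$—so that the high-frequency piece $\|\cdot\|^h_{\dot\B^s_{2,1}}$ involves only $O(1)$ dyadic blocks with index near $J_1$ and is controlled by $\|\cdot\|_{\dot\B^\sigma_{2,1}}$ for any $\sigma \geq s$—this produces the third term of \eqref{eq:prod4}. Swapping the roles of $a$ and $b$ yields the fourth term.

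The main obstacle is the bookkeeping in this low-low step. The standard paraproduct bound naturally produces an $L^{p^*}$ norm on one side and an $\dot\B^\sigma_{p,1}$ norm on the other, and one must check that the resulting combination is indeed the asymmetric pair appearing in \eqref{eq:prod4}; the identity $1/p + 1/p^* = 1/2$ together with the critical Sobolev embedding is what aligns the exponents. The freedom $\sigma \geq s$ is what allows the high-frequency $\dot\B^s_{2,1}$ norm of a frequency-localized object to be compared with a full $\dot\B^\sigma_{2,1}$ norm: the discrepancy is absorbed by the finite number of blocks contributing near the threshold $J_1$. A secondary technical point is the treatment of the ``mixed'' pieces such as $T_{a^h}b^\ell$, where support considerations ($\dot S_{j-1}a^h = 0$ for $j \leq J_1$) reduce the contribution to a boundary term near $J_1$ that can be treated by the same low-low argument.
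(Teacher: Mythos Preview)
Your proposal is correct and follows essentially the same route as the paper. The paper uses the two-term Bony decomposition $ab=T_ab+T'_ba$ (with $T'_ba=T_ba+R(a,b)$) rather than your three-term $T_ab+T_ba+R(a,b)$, but this is a cosmetic difference; the splitting into low/high frequencies, the use of the H\"older pairing $1/p+1/p^*=1/2$ together with the embedding $\dot\B^{d/p-d/p^*}_{p,1}\hookrightarrow L^{p^*}$ for the low--low pieces, and the observation that the mixed pieces reduce to $O(1)$ boundary blocks near $J_1$ are exactly the ingredients the paper uses. One minor point: the paper bounds the boundary terms $T'_{b^h}a^\ell$ and $T_{a^h}b^\ell$ by the high-frequency quantities $\|a\|_{L^\infty}\|b\|^h_{\dot\B^s_{2,1}}$ rather than by the low--low terms as you suggest, but either route works since only finitely many blocks contribute.
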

\begin{proof} 
Recall the  following so-called Bony decomposition 
(first introduced by J.-M. Bony in \cite{Bony})
for the product 
of two tempered distributions $f$ and $g$:
$$fg=T_fg+T'_gf\with T_fg\triangleq\sum_{j\in\Z}\dot S_{j-1}f\,\ddj g\andf
T'_gf\triangleq \sum_{j\in\Z}\dot S_{j+2}g\,\ddj f.$$
Using this decomposition 
 and further splitting  $a$ and $b$ into low and high frequencies,
 we get
 $$ ab=T_{a^\ell}b^\ell+T'_{b^\ell}a^\ell+T'_{b}a^h+T_{a}b^h+T'_{b^h}a^\ell+T_{a^h}b^\ell.$$
 All the terms in the right-hand side, except for the last two ones, may be bounded
 by means of standard results of continuity for operators $T$ and $T'$
   (see again \cite[Chap. 2]{HJR}). Provided $\sigma\geq s>0,$ we get,
 $$\begin{aligned}
 \|T'_{b^\ell}a^\ell\|^h_{\dot\B^{s}_{2,1}}& \lesssim \|T'_{b^\ell}a^\ell\|^h_{\dot\B^{\sigma}_{2,1}}\lesssim \|b^\ell\|_{L^{p^*}}\|a^\ell\|_{\dot\B^{\sigma}_{p,1}},\\
 \|T'_{b}a^h\|_{\dot\B^{s}_{2,1}}&\lesssim\|b\|_{L^\infty}\|a^h\|_{\dot\B^{s}_{2,1}}.
\end{aligned}$$ 
Let $J_1$ be the integer corresponding to the threshold 
between low and high frequencies. Since $a^\ell=\dot S_{J_1+1}a$ and $b^h=({\rm Id}-\dot S_{J_1+1})b,$ we see that 
$$
T'_{b^h} a^\ell=  \dot S_{J_1+2}b^h\,\dot\Delta_{J_1+1} a^\ell.$$
Consequently, as 
$\dot S_{J_1+2}b^h=(\dot\Delta_{J_1-1}+\dot\Delta_{J_1}+\dot\Delta_{J_1+1})b^h,$
$$\|T_{b^h} a^\ell\|_{\dot\B^{s}_{2,1}}\lesssim
\|\dot\Delta_{J_1+1}a^\ell\|_{L^\infty} \|\dot S_{J_1+2}b^h\|_{L^2}\lesssim \|a\|_{L^\infty} 
 \|b\|^h_{\dot\B^{s}_{2,1}}.$$
Adding up this latter inequality to the previous one and to the symmetric ones (with just operator $T$ instead of $T'$),  and the embeddings 
   $\dot\B^{\frac dp}_{p,1}\hookrightarrow L^\infty$ and, as $p\leq p*$,
   $\dot\B^{\frac dp-\frac d{p*}}_{p,1}\hookrightarrow L^{p^*}$ completes the proof of \eqref{eq:prod4}. 
\end{proof}

 To handle commutators  in high frequencies, we need the following lemma.
  \begin{Lemme}
\label{CP}Let $p\in[2,4]$ and $s>0$. Define $p^*\triangleq 2p/(p-2).$  For $j\in\Z,$ 
denote $\mathfrak{R}_j\triangleq \dot S_{j-1}w\,\ddj z-\ddj(wz)$.

There exists  a constant $C$ depending only on  the threshold 
number $J_1$ between low and high frequencies and on $s,$ $p,$ $d,$ such that

$$\displaylines{
\sum_{j\geq J_1}\left(2^{js}\norme{\mathfrak{R}_j}_{L^2}\right)\leq C\Bigl(\norme{\nabla w}_{\dot{\mathbb{B}}^{\frac dp}_{p,1}}\norme{z}^h_{\dot{\mathbb{B}}^{s-1}_{2,1}}
+ \norme{z}^\ell_{\dot{\mathbb{B}}^{\frac{d}{p}-\frac{d}{p*}}_{p,1}}\norme{w}^\ell_{\dot{\mathbb{B}}^{\sigma_1}_{p,1}}\hfill\cr\hfill+\norme{z}_{\dot{\mathbb{B}}^{\frac dp-k}_{p,1}}\norme{w}^h_{\dot{\mathbb{B}}^{s+k}_{2,1}}
+ \norme{z}^\ell_{\dot{\mathbb{B}}^{\sigma_2}_{p,1}}\norme{ \nabla  w}^\ell_{\dot{\mathbb{B}}^{\frac{d}{p}-\frac{d}{p*}}_{p,1}}\Bigr),}$$
for any $k\geq0$, $\sigma_1 \geq s$ and $\sigma_2\in\R.$


\end{Lemme}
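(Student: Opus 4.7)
The starting point is the algebraic identity, obtained by adding and subtracting $\ddj(\dot S_{j-1}w \cdot z)$,
\begin{equation*}
\mathfrak{R}_j = -[\ddj,\dot S_{j-1}w]\,z \;-\; \ddj\bigl((w-\dot S_{j-1}w)\,z\bigr) \;=:\; -I_j - II_j.
\end{equation*}
This cleanly separates a commutator $I_j$, to which the kernel estimate of Lemma \ref{Commutateur1} applies directly, from a truly bilinear piece $II_j$ whose residual factor $w - \dot S_{j-1}w = \sum_{j' \geq j-1}\ddj' w$ only carries frequencies above $2^{j-1}$, precisely the regime where I expect the convergence of the weighted sum to come from.

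For $I_j$, I will split $z = z^\ell + z^h$ and invoke Lemma \ref{Commutateur1} twice. With the H\"older pair $(r',q)=(2,\infty)$, the commutator estimate gives
\begin{equation*}
\|[\ddj,\dot S_{j-1}w]z^h\|_{L^2} \lesssim 2^{-j}\,\|\nabla \dot S_{j-1}w\|_{L^\infty}\,\|z^h\|_{L^2}.
\end{equation*}
Bounding $\|\nabla \dot S_{j-1}w\|_{L^\infty}\lesssim \|\nabla w\|_{\dot\B^{d/p}_{p,1}}$ via the embedding $\dot\B^{d/p}_{p,1}\hookrightarrow L^\infty$ and refining the second factor through a dyadic decomposition $z^h = \sum_{j''\geq J_1}\ddj'' z$ (by Fourier-support considerations, only $O(1)$ blocks $\ddj'' z$ interact non-trivially with the commutator at scale $j$), the weighted sum $\sum_{j\geq J_1}2^{js}(\cdots)$ reproduces the first right-hand-side term $\|\nabla w\|_{\dot\B^{d/p}_{p,1}}\|z\|^h_{\dot\B^{s-1}_{2,1}}$ after exchanging orders of summation via Young's inequality on $\Z$. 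With the pair $(r',q)=(p,p^*)$, the same lemma yields
\begin{equation*}
\|[\ddj,\dot S_{j-1}w]z^\ell\|_{L^2} \lesssim 2^{-j}\,\|\nabla \dot S_{j-1}w\|_{L^{p^*}}\,\|z^\ell\|_{L^p};
\end{equation*}
splitting $\nabla w$ at the threshold $J_1$ and using Bernstein to trade exponents then feeds this contribution into the second and fourth right-hand-side terms.

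For $II_j$, I expand $w - \dot S_{j-1}w = \sum_{j' \geq j-1}\ddj' w$ and further split $z = z^\ell + z^h$. The sub-interaction $\ddj' w \cdot z^h$ is controlled by the standard $L^\infty\cdot L^2\to L^2$ H\"older through $\dot\B^{d/p}_{p,1}\hookrightarrow L^\infty$; the freedom to trade $k$ derivatives between the high-frequency part of $w$ (which sits in an $L^2$-based Besov scale via $\|w\|^h_{\dot\B^{s+k}_{2,1}}$) and $z$ via Bernstein yields the third right-hand-side term $\|z\|_{\dot\B^{d/p-k}_{p,1}}\|w\|^h_{\dot\B^{s+k}_{2,1}}$. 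The sub-interaction $\ddj' w \cdot z^\ell$ I handle by $L^{p^*}\cdot L^p \to L^2$ H\"older, using Bernstein on $\|\ddj' w\|_{L^{p^*}}$ to revert to an $L^p$-based Besov norm; the purely low-frequency piece (both $j'\leq J_1+1$ and the block of $z$ below the threshold) produces the second right-hand-side term, where the freedom $\sigma_1\geq s$ absorbs any excess regularity needed at the threshold, while the complementary mixed contributions fall into the fourth right-hand-side term.

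The principal obstacle will be the bookkeeping at the junction $j \simeq J_1$, where low- and high-frequency blocks of both $w$ and $z$ can participate in a single product and one must keep the resulting sequence $(c_j)$ summable to $1$. The flexibility of the parameters $k \geq 0$, $\sigma_1 \geq s$, and $\sigma_2 \in \R$ in the statement is exactly what is needed to absorb these boundary contributions: for each sub-interaction one selects the exponent so that the resulting sum over $j'$ converges at the threshold. Once the individual pointwise bounds are in place, a standard Young-type convolution argument on $\Z$ upgrades them into the claimed weighted $\ell^1$ estimate.
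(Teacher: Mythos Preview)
Your decomposition $\mathfrak{R}_j = -I_j - II_j$ is workable and relies on the same toolkit as the paper (Lemma~\ref{Commutateur1} with the H\"older pairs $(\infty,2)$ and $(p^*,p)$, Bernstein, and the low/high split at $J_1$), but the paper organizes the computation differently, through Bony's paraproduct decomposition. Writing $wz = T_w z + T'_z w$, one obtains three pieces
\[
\mathfrak{R}_j = -\ddj(T'_z w) \;-\; \sum_{|j'-j|\leq 4}[\ddj,\dot S_{j'-1}w]\dot\Delta_{j'}z \;-\; \sum_{|j'-j|\leq 1}(\dot S_{j'-1}-\dot S_{j-1})w \cdot \ddj\dot\Delta_{j'}z.
\]
The first piece is split as $T'_{z^\ell}w^\ell + T'_{z^h}w^\ell + T'_z w^h$ and bounded by standard paraproduct continuity (this produces the second and third right-hand-side terms cleanly, with no commutator structure needed). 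The second piece is your $I_j$ but with the frequency localization of $z$ already built in; applying Lemma~\ref{Commutateur1} separately for $j'\geq J_1$ (pair $(\infty,2)$) and $j'<J_1$ (pair $(p^*,p)$) yields the first and fourth terms. The third piece is a small telescoping remainder absorbed into the first term.

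Your two-term split hides the Bony structure inside $II_j$: once you expand $(w-\dot S_{j-1}w)z$ dyadically in both factors you recover the same interactions, but the routing you sketch is slightly off. For instance, the part of $II_j$ with $\dot\Delta_{j'}w$ at high frequency ($j'>J_1$) paired with $z^\ell$ should feed the \emph{third} term $\|z\|_{\dot\B^{d/p-k}_{p,1}}\|w\|^h_{\dot\B^{s+k}_{2,1}}$, not the fourth (which carries $\nabla w^\ell$); and the $\dot\Delta_{j'}w\cdot z^h$ interaction contributes both to the first term (when $j'\lesssim j$, so $z^h$ is near the output frequency) and to the third (when $j'\gg j$, so $z^h$ must match $j'$). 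The Bony route makes this bookkeeping automatic by separating $T'_z w$ from the commutator at the outset, which is what buys the cleaner mapping from interaction type to target term.
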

\begin{proof}
From Bony's decomposition recalled above
and the fact that $\dot\Delta_j\dot\Delta_{j'}=0$ for $|j-j'|\geq2,$
we deduce that
$$\begin{aligned}\mathfrak{R}_j&=-\ddj(T'_{z}w)-\sum_{|j'-j|\leq4} [\ddj,\dot S_{j'-1}w]\dot\Delta_{j'}z
- \sum_{|j'-j|\leq 1}\left(\dot{S}_{j'-1}w-\dot{S}_{j-1}w\right)\dot{\Delta}_j\dot{\Delta}_{j'} z\\
&\triangleq\mathcal{R}^1_j + \mathcal{R}^2_j +  \mathcal{R}^3_j. 
\end{aligned}$$
To estimate  $\mathcal{R}^1_j$, we  use the decomposition
$$
T'_zw=T'_{z^\ell}w^\ell +T'_{z^h}w^\ell+T'_zw^h$$
and proceed as in the proof of Proposition \ref{LPP}. In the end, we get
$$\|T'_{z}w\|^h_{\dot\B^{s}_{2,1}}\lesssim\|z\|_{\dot\B^{-k}_{\infty,1}}\|w\|^h_{\dot\B^{s+k}_{2,1}}+ \|z\|^\ell_{\dot\B^{\frac dp-\frac{d}{p*}}_{p,1}}\|w\|^\ell_{\dot\B^{\sigma_1}_{p,1}}.$$
Therefore, since $\dot\B^{\frac dp-k}_{p,1}\hookrightarrow\dot\B^{-k}_{\infty,1},$
\begin{equation}\label{eq:Rj1}
\sum_{j\in\Z}\left(2^{js}\norme{\cR_j^1}_{L^2}\right)\lesssim
\|z\|_{\dot\B^{\frac{d}{p}-k}_{p,1}}\|w\|^h_{\dot\B^{s+k}_{2,1}}+ \|z\|^\ell_{\dot\B^{\frac dp-\frac{d}{p*}}_{p,1}}\|w\|^\ell_{\dot\B^{\sigma_1}_{p,1}}.
\end{equation}
Next, taking advantage of Lemma \ref{C1}, we see that
if $j'\geq J_1$ and $|j-j'|\leq 4,$ then we have 
$$
2^{js} \|[\ddj,\dot S_{j'-1}w] \dot\Delta_{j'}z\|_{L^2}
\lesssim \|\nabla \dot S_{j'-1}w\|_{L^{\infty}} \, 2^{j'(s-1)}\| \dot\Delta_{j'}z\|_{L^2}
$$
while, if $j'<J_1,$ $j\geq J_1$ and $|j-j'|\leq 4,$ 
\begin{eqnarray*}
2^{js} \|[\ddj,\dot S_{j'-1}w]\dot\Delta_{j'}z\|_{L^2}
&\lesssim& 2^{J_1(s-\sigma_2-1)}2^{j(\sigma_2+1)} \|[\ddj,\dot S_{j'-1}w]\dot\Delta_{j'}z\|_{L^2}\\&\lesssim& 2^{J_1(s-\sigma_2-1)}\|\nabla\dot S_{j'-1}w\|_{L^{p^*}} \, 2^{j'\sigma_2}\| \dot\Delta_{j'}z\|_{L^p}.
\end{eqnarray*}
Therefore, 
\begin{eqnarray}\label{eq:Rj2}
\sum_{j\geq J_1}\left(2^{js}\norme{\cR_j^2}_{L^2}\right)&\lesssim &
\norme{z}^h_{\dot{\mathbb{B}}^{s-1}_{2,1}}\norme{\nabla w}_{L^\infty}+\norme{z}^\ell_{\dot{\mathbb{B}}^{\sigma_2}_{p,1}}\norme{\nabla w}_{L^{p^*}}^\ell\cdotp
\end{eqnarray}
Then  with suitable embeddings, one gets
\begin{eqnarray}
\label{eq:Rj22}
\sum_{j\geq J_1}\left(2^{js}\norme{\cR_j^2}_{L^2}\right)&\lesssim &
\norme{z}^h_{\dot{\mathbb{B}}^{s-1}_{2,1}}\norme{\nabla w}_{\dot{\mathbb{B}}^{\frac{d}{p}}_{p,1}}+\norme{z}^\ell_{\dot{\mathbb{B}}^{\sigma_2}_{p,1}}\norme{\nabla w}_{\dot{\mathbb{B}}^{\frac{d}{p}-\frac{d}{p*}}_{p,1}}^\ell\cdotp
\end{eqnarray}
Finally, for all $j\geq J_1$ and $|j'-j|\leq 1,$ we have
$$\begin{aligned}
2^{js}\|(\dot{S}_{j'-1}w-\dot{S}_{j-1}w)\dot{\Delta}_j\dot{\Delta}_{j'}z\|_{L^2}
&\leq 2^j\|\dot\Delta_{j\pm1} w\|_{L^\infty}\, 2^{j(s-1)}\| \dot\Delta_{j'}\ddj z\|_{L^2}\\
&\leq C\|\dot\Delta_{j\pm1} \nabla w\|_{L^\infty}\, 2^{j(s-1)}\| \ddj z\|_{L^2}.\end{aligned}
$$
Hence
\begin{equation}\label{eq:Rj3}
\sum_{j\geq J_1}\left(2^{js}\norme{\cR_j^3}_{L^2}\right)\leq 
C\|\nabla w\|_{L^\infty}\|z\|^h_{\dot\B^{s-1}_{2,1}}.\end{equation}
Putting \eqref{eq:Rj1}, \eqref{eq:Rj22} and \eqref{eq:Rj3} together  yields the desired estimate.

\end{proof}

\bibliographystyle{plain}


\bibliography{Biblio}
\end{document}